\newcommand{\F}{\mathcal{F}}
\newcommand{\tr}{\mathrm{tr}}
\renewcommand{\S}{\mathcal{S}}
\newcommand{\sgn}{\mathrm{sgn}}
\newtheorem{thm}{Theorem}
\newtheorem{lemma}[thm]{Lemma}
\newtheorem{cor}[thm]{Corollary}
\theoremstyle{definition}
\numberwithin{equation}{section}
\numberwithin{thm}{section}
\title{Super band gaps and periodic approximants of generalised Fibonacci tilings}
\author{Bryn Davies\thanks{Department of Mathematics, Imperial College London, London SW7~2AZ, UK.}
  \and Lorenzo Morini\thanks{School of Engineering, Cardiff University, Cardiff CF24~3AA, UK.}}
\date{}
\begin{document}
\maketitle

\begin{abstract}
    We present mathematical theory for understanding the transmission spectra of heterogeneous materials formed by generalised Fibonacci tilings. Our results, firstly, characterise super band gaps, which are spectral gaps that exist for any periodic approximant of the quasicrystalline material. This theory, secondly, establishes the veracity of these periodic approximants, in the sense that they faithfully reproduce the main spectral gaps. We characterise super band gaps in terms of a growth condition on the traces of the associated transfer matrices. Our theory includes a large family of generalised Fibonacci tilings, including both precious mean and metal mean patterns. We demonstrate our fundamental results through the analysis of three different one-dimensional wave phenomena: compressional waves in a discrete mass-spring system, axial waves in structured rods and flexural waves in multi-supported beams. In all three cases, the theory is shown to give accurate predictions of the super band gaps, with negligible computational cost and with significantly greater precision than previous estimates.
\end{abstract}

\section{Introduction}

Heterogeneous materials have the ability to manipulate and guide waves in carefully controlled ways. The discovery of exotic phenomena, such as negative refraction and cloaking effects, led to the name \emph{metamaterials} being coined \cite{kadic20193d}. While many such materials are based on periodic structures, quasiperiodic materials have fascinating wave scattering and transmission properties and have the potential to greatly enlarge the metamaterial design space. However, the lack of concise mathematical methods able to describe the transmission spectra of quasiperiodic materials efficiently and with minimal computational cost is a significant barrier to widespread usage. In this work, we help to overcome this barrier by developing a concise approach for characterising the spectral gaps in quasicrystalline generated materials.

Characterising the spectra of quasiperiodic differential operators is a longstanding and fascinating problem. In particular, one-dimensional Schr\"odinger operators with quasiperiodic potentials have been widely studied. Typical results concern the Cantor-type properties of the spectrum \cite{avila2009ten, eliasson1992floquet, gumbs1988dynamical, kohmoto1984cantor} and the extent to which its spectrum can be decomposed into pure-point, singularly continuous and absolutely continuous eigenvalues \cite{jitomirskaya1999metal, surace1990schrodinger}. In this work, the aim is to quantify specific spectral features, rather than characterise overall properties of the spectrum. A promising avenue in this direction, which we will not make use of in this work, is to exploit the fact that quasicrystals can be obtained through incommensurate projections of periodic patterns in higher dimensional spaces. This approach has been used to model wave propagation in one-dimensional quasicrystals \cite{amenoagbadji2023wave} and make predictions on the locations of spectral gaps \cite{rodriguez2008computation}. In the latter case, this approach has suffered from the occurrence of spurious modes and a precise convergence theory has yet to be established. In this work, we will bypass these issues by taking a different approach that is specifically developed for generalised Fibonacci quasicrystals.

Generalised Fibonacci tilings are a subclass of the family of one-dimensional quasiperiodic patterns that can be generated by substitution rules. These patterns were classified by \cite{kolar1993new} and are formed by tiling two distinct elements, labelled $A$ and $B$, according to some substitution rule
\begin{equation} \label{eq:tiling_general}
    A\to \mathcal{M}_{ml}(A,B), \quad B\to \mathcal{M}_{{m'}{l'}}(A,B),
\end{equation}
where $\mathcal{M}_{ml}(A,B)$ is some pattern that contains the $A$ elements $m$ times and  the $B$ elements $l$ times. The most widely studied example of such a tiling is the golden mean Fibonacci tiling, which is given by \eqref{eq:tiling_general} with $m=l=m'=1$ and $l'=0$. The first few terms of this sequence are shown in Figure~\ref{fig:FibonacciTiling}. Generalised Fibonacci tilings have been studied extensively in the literature for various elastic, mechanical and Hamiltonian systems \cite{dal2007spectral, gei2010, gei2020phononic, hiramoto1989new, kraus2012topological, morini2018waves, moustaj2022spectral}. Complex patterns of stop and pass bands have been observed, whose features include large stop bands across multiple frequency scales and self similar properties.

\begin{figure}
    \centering
    \includegraphics[width=\linewidth]{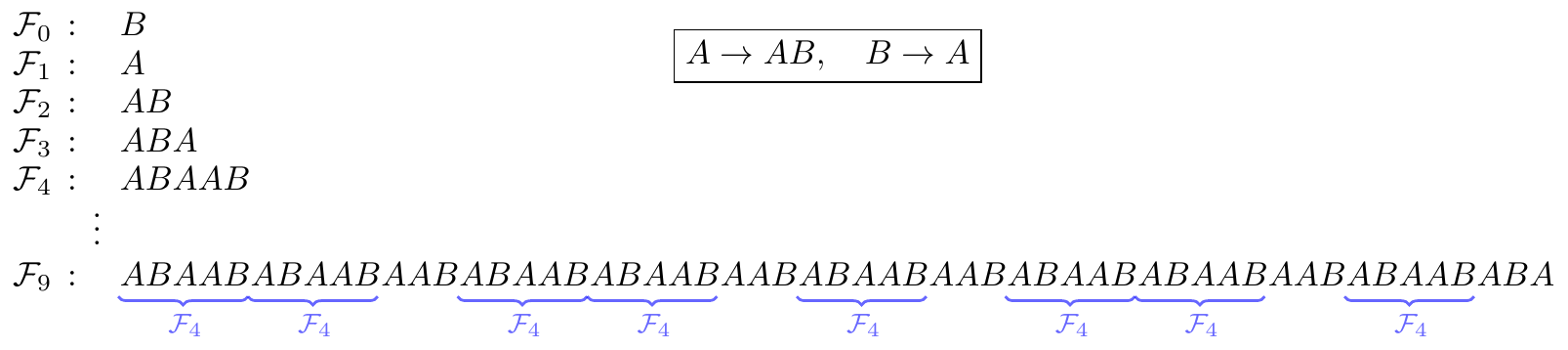}
    \caption{The golden mean Fibonacci tiling (where $m=1$ and $l=1$). The self similarity of the structures is clear from later terms in the sequence. As an example, $\F_9$ is contains many repetitions of $\F_4$.}
    \label{fig:FibonacciTiling}
\end{figure}

Given the challenges of characterising the spectra of quasicrystals, a common strategy is to consider periodic approximants of the material, sometimes known as \emph{supercells}. This approach is commonplace in the physical literature (for example, in \cite{chan1998photonic, florescu2009complete, hamilton2021effective}) and has the significant advantage that the spectra of the periodic approximants can be computed efficiently using Floquet-Bloch analysis. This method characterises the spectrum as a countable collection of spectral bands with \emph{band gaps} between each band. In the setting of tilings where the quasicrystalline pattern is generated using a substitution rule, such an approach is particularly promising. A natural question to ask is how the band gaps evolve as the unit cell is grown according to the given tiling rule. An example is shown in Figure~\ref{fig:intro_SBG}, where we plot the band diagrams for a system of axial waves in structured rods (which will be examined in detail in Section~\ref{sec:rods}) with the unit cell is designed to follow the golden mean Fibonacci tiling. We can see that while the spectrum of the Fibonacci tilings $\F_n$ becomes increasingly complex as $n$ grows, there are some clear features that emerge. As $n$ increases, the pattern of pass bands and band gaps becomes increasingly fragmented, reminiscent of the Cantor-type behaviour predicted by the literature for other quasiperiodic operators \cite{avila2009ten, eliasson1992floquet, gumbs1988dynamical, kohmoto1984cantor}. In spite of this complexity, several large band gaps seem to appear for relatively small $n$ (\emph{e.g.} for $\F_4$) and persist as $n$ grows. These features were noticed by \cite{morini2018waves} who coined the phrase \emph{super band gaps} to describe these features.

\begin{figure}
    \centering
    \begin{subfigure}{0.48\textwidth}
        \includegraphics[width=\linewidth,trim=1.1cm 0 1.5cm 0,clip]{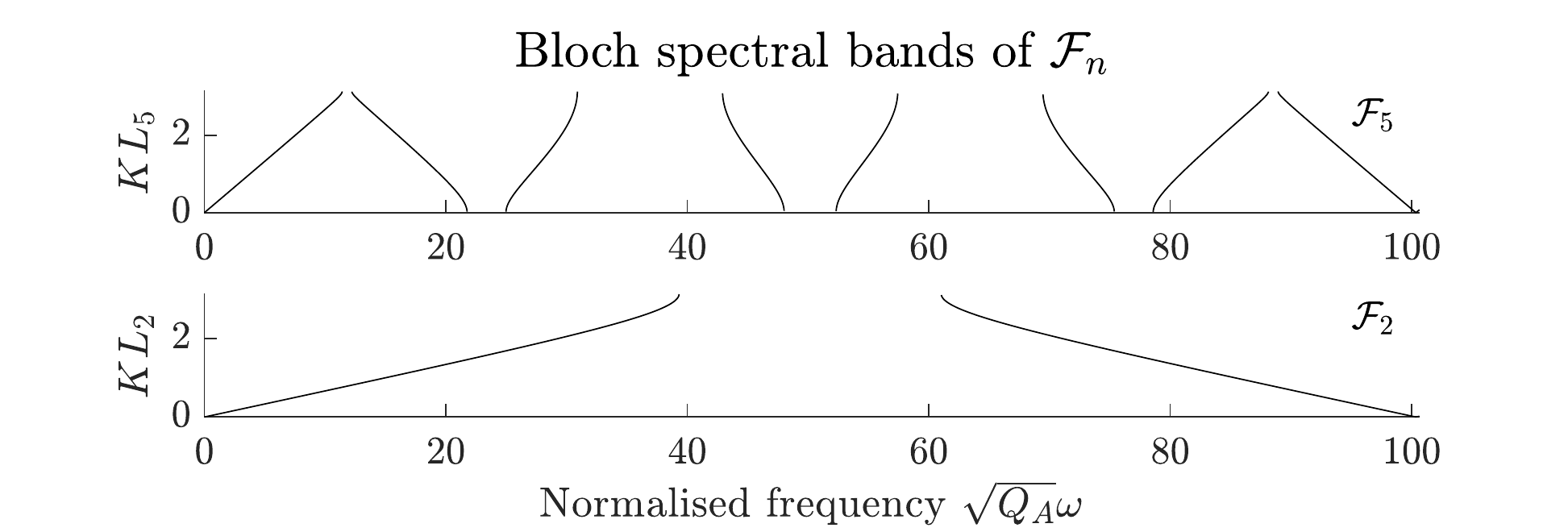}
    \end{subfigure}
    \hfill
    \begin{subfigure}{0.48\textwidth}
        \includegraphics[width=\linewidth,trim=1.1cm 0 1.5cm 0,clip]{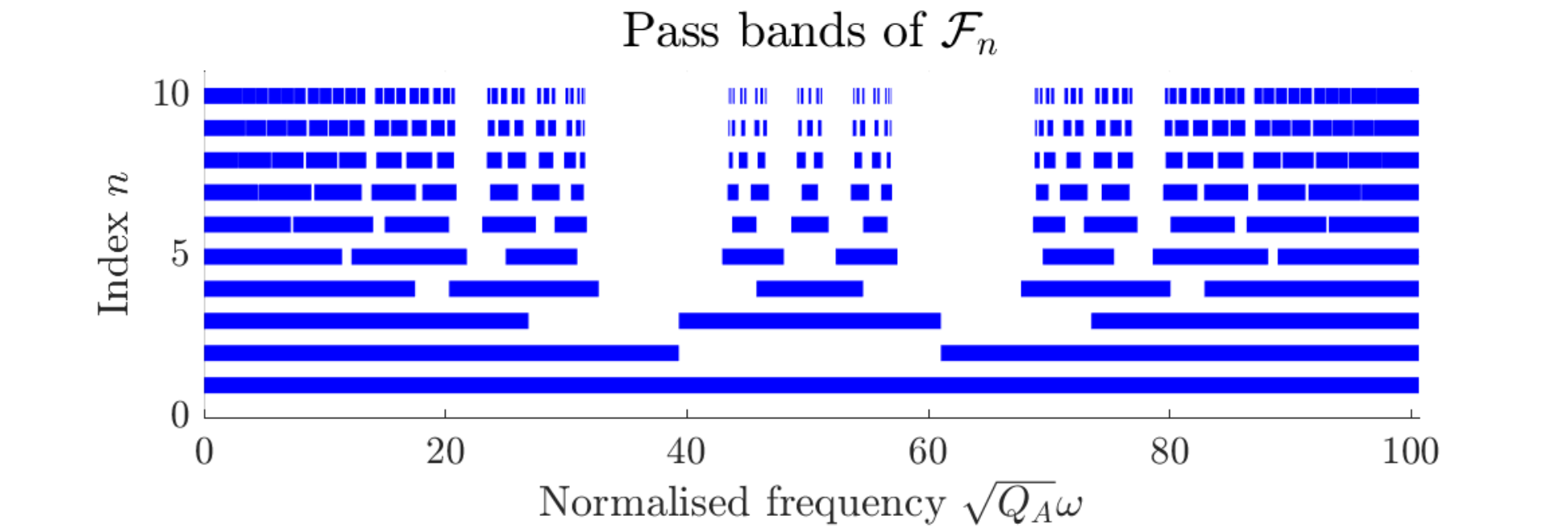}
    \end{subfigure}
    
    \caption{The transmission spectra of periodic structures with unit cells given by generalised Fibonacci tilings can be computed using Floquet-Bloch. Left: The Bloch band diagrams for periodic unit cells given by the golden mean Fibonacci tilings $\F_2$ and $\F_5$. Right: The pass bands for successive golden mean Fibonacci tilings, from which the emergence of super band gaps is clear.}
    \label{fig:intro_SBG}
\end{figure}

One explanation for the emergence of super band gaps in Fibonacci tilings is their structural self similarity. For example, Figure~\ref{fig:FibonacciTiling} depicts the first few golden mean Fibonacci tilings and it is clear that $\F_9$ contains $\F_4$ repeated many times, with a few other interspersed pieces. Thus, it is to be expected that a periodic material with $\F_9$ as its unit cell should share some of the main features of its transmission spectrum with the $\F_4$ periodic material. \cite{morini2018waves} developed a simple but successful approximation strategy for predicting the approximate locations of super band gaps in generalised Fibonacci tilings. However, a theoretical justification of this behaviour remains an open question. The aim of this work is to develop mathematical theory to characterise the existence of these super band gaps.

Understanding super band gaps is not only useful for characterising the main features of the transmission spectra of quasicrystalline materials, but also provides justification for the use of periodic approximants (supercells). We will demonstrate in Section~\ref{sec:periodic} that the transmission coefficient of a finite-sized piece of a Fibonacci quasicrystal can be approximated by the transmission spectrum of a periodic approximant. Our results show that even a periodic approximant with a small unit cell can accurately predict the main spectral gaps of the finite one-dimensional quasicrystal. This is predicted by our theory for super band gaps, which demonstrates the existence of frequency ranges which will always be in spectral gaps, for any generalised Fibonacci tiling beyond a given term in the sequence.

The methods developed in this study will apply to one-dimensional wave systems with two degrees of freedom, which can be described by a $2\times2$ transfer matrix. Three examples of applicable systems are shown in Figure~\ref{fig:systems}. The first is a discrete system of masses and springs, where we vary the spring constants and the masses to give the appropriate $A$ and $B$ pattern. The second system concerns axial waves in structured rods, which are governed by a Helmholtz equation. Here, we modulate the dimensions and also the material parameters (Young's modulus and mass density). Finally, we will consider a continuous flexural beam that is supported at varying intervals. We will examine these three systems in detail in Section~\ref{sec:examples} and present numerical results demonstrating that our theory for super band gaps can be used to reveal spectral features accurately and with minimal computational cost.

\begin{figure}
    \centering
    \includegraphics[width=0.95\linewidth]{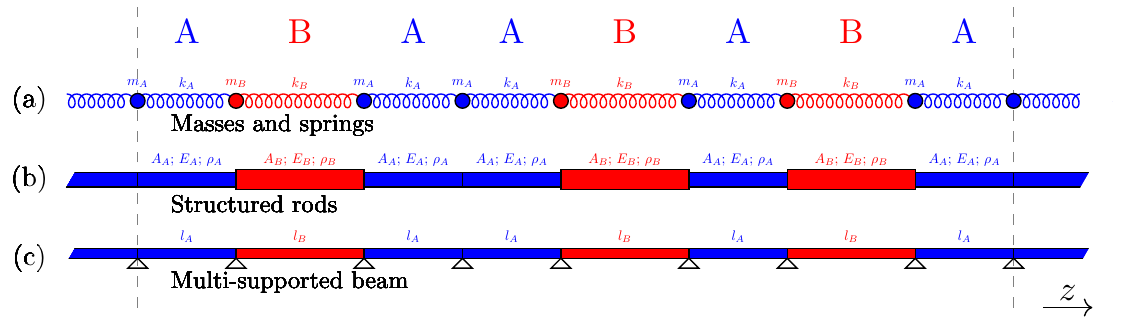}
    \caption{We will present numerical results for three different one-dimensional wave systems in this work. Here, unit cells corresponding to the golden mean Fibonacci tiling $\F_5=ABAABABA$ is shown. (a) A discrete system of masses coupled with springs, where we modulate both the masses $m_X$ and the spring constants $k_X$. (b) Axial waves in structured rods, where the cross sections $A_X$, the Young's modulus $E_X$ and the mass density $\rho_X$ can be modulated. (c) Flexural waves in multi-supported beams, where the distances $l_X$ between the supports are varied.}
    \label{fig:systems}
\end{figure}

\section{Generalised Fibonacci tilings} \label{gentheory}

Generalised Fibonacci structures are defined according to the substitution rule

\begin{equation} \label{eq:tiling}
    A\to A^m B^l, \quad B\to A,
\end{equation}
where $m$ and $l$ are positive integers. Typically, the sequence is initiated with $\F_0=B$, which yields that $\F_1=A$, $\F_2=A^mB^l$, $\F_3=(A^mB^l)^mA^l$ and so on (see Figure~\ref{fig:FibonacciTiling}). The total number of elements in $\F_n$ is given by the $n$\textsuperscript{th} generalised Fibonacci number $F_n$, which are defined according to the recurrence relation
\begin{equation} \label{eq:Fibonaccinumbers}
    F_n = mF_{n-1} + lF_{n-2}.
\end{equation}
The limit of the ratio $F_{n+1}/F_n$ as $n\to\infty$ is given by
\begin{equation} \label{def:sigma}
    \sigma(m,l) := \lim_{n\to\infty} \frac{F_{n+1}}{F_n}=\frac{m+\sqrt{m^2+4l}}{2}
\end{equation}
and the tilings inherit their names from this limiting ratio. For example, since $\sigma(1,1)=(1+\sqrt{5})/2\approx1.618\dots$, this case is often known as the golden mean Fibonacci tiling. Similarly, $\sigma(2,1)=1+\sqrt{2}\approx2.414\dots$ is the silver mean and $\sigma(3,1)=(3+\sqrt{13})/2\approx 3.303\dots$ is the bronze mean. Likewise, $\sigma(1,2)$ and $\sigma(1,3)$ have assumed the names copper mean and nickel mean, respectively.

We will study wave propagation in systems that have two degrees of freedom, in the sense that their behaviour can be described fully by a two-element state vector $\mathbf{u}_j\in\mathbb{R}^2$, where $j$ is an index denoting the spatial position. We suppose that wave propagation in these systems can be described by a unimodular transfer matrix $T(\omega)$ with real-valued entries. That is, for any indices $i$ and $j$ and any frequency $\omega$ there is some matrix $T(\omega)\in\mathbb{R}^{2\times2}$ such that $\det(T)=1$ and $\mathbf{u}_i=T(\omega) \mathbf{u}_j$. We will explore three different examples of such systems in Section~\ref{sec:examples}.

We let $T_n$ be the transfer matrix associated with the structure $\F_n$. The substitution rule \eqref{eq:tiling} means that this sequence of transfer matrices satisfies 
\begin{equation}
    T_{n+1} = T_{n-1}^l \, T_{n}^m.
\end{equation}
We are interested in studying structures formed by repeating $\F_n$ periodically. We can relate the state vector at either ends of the unit cell $\F_n$ by
\begin{equation} \label{uFngen}
\mathbf{u}_{F_{n}}=T_n(\omega)\mathbf{u}_{0}.
\end{equation}
Then, to understand the transmission properties of the periodic material, we can apply the Floquet-Bloch theorem. If $L_n$ is the length of the unit cell $\F_n$, then we substitute $\mathbf{u}_{F_{n}}=\mathbf{u}_0e^{iKL_n}$ into equation (\ref{uFngen}), giving that $\det(T_n(\omega)-e^{iKL_n}I)=0$. Using the fact that $\det(T_n)=1$, this reduces to the simple dispersion relation
\begin{equation} \label{dispersion}
\cos({KL_n})=\frac{1}{2}\tr (T_n({\omega})).
\end{equation}
This has a real solution for $K$ if and only if $|\tr (T_n({\omega}))|\leq 2$. If $\omega$ is such that $K$ is complex, then we do not have Floquet-Bloch modes so $\omega$ lies in a band gap of the periodic material. Examples of the dispersion diagrams obtained by solving \eqref{dispersion} for the Fibonacci tilings $\F_2$ and $\F_5$ are shown in Figure~\ref{fig:intro_SBG}.

Characterising the band gaps of the material reduces to finding $\omega$ such that $|\tr (T_n({\omega}))|> 2$. Given the importance of the transfer matrix trace, we define the quantity
\begin{equation}
    x_n(\omega) = \tr(T_n(\omega)).
\end{equation}
Understanding how the sequence $\{x_n(\omega):n=1,2,\dots\}$ evolves for different materials and at different frequencies $\omega$ will be the main theoretical challenge tackled in this work. In particular, we will define a super band gap to be the set $\S_N$ of all $\omega\in\mathbb{R}$ which are in band gaps of $\F_n$ for all $n\geq N$. That is
\begin{equation}
    \S_N:=\left\{ \omega\in\mathbb{R} : |x_n(\omega)|>2 \text{ for all } n\geq N \right\}.
\end{equation}

In this work, we will characterise super band gaps in Fibonacci tiling by deriving ``growth conditions'' that guarantee a frequency being in a super band gap. These results say that if $\omega$ is such that there exists some $N\in\mathbb{N}$ for which $|x_{N}(\omega)|>2$ and the following terms $|x_{N+1}(\omega)|$ and $|x_{N+2}(\omega)|$ grow sufficiently quickly (in a sense that will depend on the choice of tiling parameters $l$ and $m$), then $\omega$ is guaranteed to be in the super band gap $\S_N$. This analysis will rest upon the helpful observation that the traces corresponding to generalised Fibonacci tilings satisfy recursive relations \cite{kolar1989generalized, kolar1990trace}. To state these recursion relations, we must first introduce the quantity 
\begin{equation} \label{defn:tn}
    t_n(\omega) := \tr( T_{n-2}(\omega) T_{n-1}(\omega) ).
\end{equation}
We will also need the sequence of polynomials $d_k(x)$, defined recursively by
\begin{equation} \label{defn:dk}
    d_0(x)=0, \quad
    d_1(x)=1 \quad \text{and}\quad 
    d_{k}(x) = x d_{k-1}(x)-d_{k-2}(x) \,\text{ for }\, k\geq2.
\end{equation}
We have that $d_2(x)=x$, $d_3(x)=x^2-1$, $d_4(x)=x^3-2x$, $d_5(x)=x^4-3x^2+1$ and so on. These polynomials are rescaled Chebyshev polynomials of the second kind. Understanding the properties of these polynomials (in Section~\ref{sec:polynomials}) will be one of the key insights that will allow us to prove spectral properties of generalised Fibonacci tilings for large values of $m$ or $l$. Finally we have the following recursion relation describing the evolution of $x_n$ and $t_n$, which was shown by \cite{kolar1990onedim}
\begin{equation} \label{rec:general}
    \begin{cases}
        x_{n+1}=d_m(x_n)[d_l(x_{n-1})t_{n+1}-d_{l-1}(x_{n-1})x_n]-d_{m-1}(x_n)[d_{l+1}(x_{n-1})-d_{l-1}(x_{n-1})],\\
        t_{n+1}=d_{m+1}(x_{n-1})[d_l(x_{n-2})t_n-d_{l-1}(x_{n-2})x_{n-1}] -d_m(x_{n-1})[d_{l+1}(x_{n-2})-d_{l-1}(x_{n-2})].
    \end{cases}
\end{equation}

The name ``super band gap'' was introduced by \cite{morini2018waves}, who observed their existence in generalised Fibonacci structures (corresponding to the golden and silver means). They succeed in predicting the approximate locations of these super band gaps using the function $H_n:\mathbb{R}\to[0,\infty)$ defined by
\begin{equation} \label{eq:Hdefn}
    H_n(\omega)=|\tr(T_n(\omega))\tr(T_{n+1}(\omega))|.
\end{equation}
They observed numerically that if $\omega\in\mathbb{R}$ is such that $H_2(\omega)\gg2$, then it is likely to be in a super band gap. Other approximate approaches for predicting the locations of super band gaps also exist, such as considering an ``effective lattice'' that is the superposition of two periodic lattices, with periods differing by a ratio equal to the golden mean \cite{hamilton2021effective}. This work builds on these previous results by developing the first rigorous justification for the occurrence of super band gaps in materials generated by generalised Fibonacci tilings.

\section{Theory of super band gaps}  \label{gapstheory}

In this section, we will develop the main theory characterising super band gaps in materials generated by generalised Fibonacci tilings. These results will take the form of growth conditions, which will need to be modified to suit different values of $m$ and $l$. We will apply this theory to specific physical examples in Section~\ref{sec:examples} and use it to demonstrate the accuracy of periodic approximants in Section~\ref{sec:periodic}.

\subsection{Golden mean Fibonacci}

This is the classical Fibonacci tiling, where $m=1$ and $l=1$ in \eqref{eq:tiling}. It is referred to as the \emph{golden mean} Fibonacci tiling because the limiting ratio is $\sigma(1,1)=(1+\sqrt{5})/2\approx 1.618$, the famous golden mean that appears in nature. In the golden mean Fibonacci tiling, the recursion relation \eqref{rec:general} can be simplified to a much simpler form, given by
\begin{equation} \label{rec:golden}
    x_{n+1}=x_n x_{n-1} - x_{n-2}, \quad n\geq2.
\end{equation}
This was discovered by \cite{kohmoto1983localization} and has been the basis of many subsequent studies of Fibonacci materials. 

The main result we will use to characterise super band gaps is the following theorem. This shows that if a frequency is such that the sequence of traces is outside of $[-2,2]$ and has three subsequent terms that are growing, then that frequency is in a super band gap of the golden mean Fibonacci tiling. This result is a modification of the Lemma~3.3 in \cite{davies2022symmetry}, where it was proved for the special case where successive terms are double the previous term (giving exponential growth of the sequence). Here, we have improved the tightness of the bound and shown that any growth rate bigger than 1 is sufficient for a super band gap to exist.

\begin{thm} \label{thm:gold}
    Let $\omega\in\mathbb{R}$ and consider $x_n(\omega)$ satisfying the golden mean recursion relation \eqref{rec:golden}. Suppose that there exists some $N\in\mathbb{N}$ such that
    \begin{equation*}
        |x_{N}|>2, \quad
        |x_{N+1}|\geq|x_{N}|\quad\text{and}\quad
        |x_{N+2}|\geq|x_{N+1}|.
    \end{equation*}
    Then $|x_{n+1}|\geq|x_{n}|$ for all $n>N$. Consequently, $|x_n|>2$ for all $n\geq N$, meaning that $\omega$ is in the super band gap $\S_N$.
\end{thm}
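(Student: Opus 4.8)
The plan is to prove this by a single self-propagating induction, driven entirely by the reverse triangle inequality applied to the golden mean recursion \eqref{rec:golden}. The starting observation is that for every $n\geq 2$,
\[
    |x_{n+1}| = |x_n x_{n-1} - x_{n-2}| \;\geq\; |x_n|\,|x_{n-1}| - |x_{n-2}|.
\]
I would then formulate the inductive claim so that it carries exactly three pieces of data: for $n\geq N+2$, assume $2<|x_{n-2}|\leq|x_{n-1}|\leq|x_n|$. Since $|x_{n-1}|\geq|x_{n-2}|>2$, the product satisfies $|x_n|\,|x_{n-1}|\geq 2|x_n|$; and the chain $|x_{n-2}|\leq|x_{n-1}|\leq|x_n|$ gives $|x_{n-2}|\leq|x_n|$. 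Plugging both into the displayed bound yields $|x_{n+1}|\geq 2|x_n| - |x_n| = |x_n|$. Hence the shifted triple $(|x_{n-1}|,|x_n|,|x_{n+1}|)$ again satisfies the hypothesis — largeness of its first entry because $|x_{n-1}|\geq|x_{n-2}|>2$, and the two-step monotonicity because of the previous step — so the induction closes.

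For the base case I would simply note that the hypotheses of the theorem say precisely that $(|x_N|,|x_{N+1}|,|x_{N+2}|)$ satisfies $2<|x_N|\leq|x_{N+1}|\leq|x_{N+2}|$, which is the inductive hypothesis at $n=N+2$. The induction then gives $|x_{n+1}|\geq|x_n|$ for all $n\geq N+2$, and together with the two assumed inequalities $|x_{N+1}|\geq|x_N|$ and $|x_{N+2}|\geq|x_{N+1}|$ this establishes $|x_{n+1}|\geq|x_n|$ for every $n\geq N$. Consequently $|x_n|\geq|x_N|>2$ for all $n\geq N$, and by the dispersion relation \eqref{dispersion} this means every such $\omega$ lies in a band gap of $\F_n$ for all $n\geq N$, i.e.\ $\omega\in\S_N$.

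I do not expect a genuine obstacle: the work is in packaging the induction correctly. The two subtle points are (i) choosing the inductive hypothesis to include both the largeness of the earliest term and the two-step monotonicity, so that it regenerates itself, and (ii) tracking which inequality does which job — $|x_{n-1}|>2$ supplies the factor $2$, while the monotonicity supplies the cancellation of the $-|x_{n-2}|$ term. One should also check that the reverse triangle bound is actually useful here, i.e.\ that $|x_n|\,|x_{n-1}|-|x_{n-2}|\geq 0$, which is exactly what the established bounds give. As a bonus, the same computation yields $|x_{n+1}|\geq(|x_{n-1}|-1)|x_n| + (|x_n|-|x_{n-2}|)\geq(|x_{n-1}|-1)|x_n|$, so in fact the sequence grows strictly and geometrically with ratio at least $|x_{n-1}|-1>1$, which is the improvement over the exponential-growth version in \cite{davies2022symmetry}.
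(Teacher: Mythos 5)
Your proof is correct and follows essentially the same route as the paper: a reverse-triangle-inequality bound on the recursion combined with induction on the three-term hypothesis $2<|x_{n-2}|\leq|x_{n-1}|\leq|x_n|$. The only difference is cosmetic — the paper groups the bound as $|x_{N+2}|(|x_N|-1)+(|x_{N+2}|-|x_N|)$ whereas you use $|x_{n-1}|>2$ to write $2|x_n|-|x_n|$ — and your closing remark about geometric growth with ratio $|x_{n-1}|-1$ is the same observation the paper's grouping makes implicit.
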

\begin{proof}
We will show that $|x_{N+3}|>|x_{N+2}|$, from which the result will follow by induction. We have that
\begin{equation}
    |x_{N+3}|\geq |x_{N+2}||x_{N+1}|-|x_N| \geq |x_{N+2}||x_{N}|-|x_N| = |x_{N+2}|(|x_{N}|-1)+(|x_{N+2}|-|x_N|).
\end{equation}
By hypothesis, we have that $|x_{N}|-1>1$ and $|x_{N+2}|\geq|x_N|$, so it holds that $|x_{N+3}|\geq|x_{N+2}|$.
\end{proof}

\subsection{Silver mean Fibonacci}

The case  where $m=2$ and $l=1$ in \eqref{eq:tiling} is known as the \emph{silver mean} Fibonacci, again inheriting its name from the limit $\sigma(2,1)=1+\sqrt{2}\approx 2.414$. After some rearrangement, the corresponding recursion rule is given by
\begin{equation} \label{rec:silver}
    \begin{cases}
        x_{n+1} = x_n t_{n+1} - x_{n-1}, \\
        t_{n+1} = x_n x_{n-1} - t_n,
    \end{cases}
\end{equation}
for $n\geq 2$. While this is more complicated than in the case of the golden mean, we nevertheless have an analogous result to characterise super band gaps.

\begin{thm} \label{thm:silver}
    Let $\omega\in\mathbb{R}$ and consider $x_n(\omega)$ satisfying the silver mean recursion relation \eqref{rec:silver}. Suppose that there exists some $N\in\mathbb{N}$ such that
    \begin{equation*}
        |x_{N}|>2, \quad
        |x_{N+1}|\geq|x_{N}|\quad\text{and}\quad
        |x_{N+2}|\geq|x_{N+1}|.
    \end{equation*}
    Then $|x_{n+1}|\geq|x_{n}|$ for all $n>N$.
    Consequently, $|x_n|>2$ for all $n\geq N$, meaning that $\omega$ is in the super band gap $\S_N$.
\end{thm}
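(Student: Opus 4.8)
The plan is to follow the same inductive strategy as in the proof of Theorem~\ref{thm:gold}: show that the three growth hypotheses at level $N$ propagate one step, i.e.\ that $|x_{N+3}|>|x_{N+2}|$, and then close the induction by replacing $N$ with $N+1$. The genuinely new difficulty compared with the golden mean is that the recursion \eqref{rec:silver} couples $x_n$ to the auxiliary sequence $t_n$, so before estimating $x_{N+3}$ one needs two-sided control on the intervening traces $t_{N+2}$ and $t_{N+3}$. The observation that makes this possible is that the first line of \eqref{rec:silver} can be inverted: whenever $x_n\neq0$ we have $t_{n+1}=(x_{n+1}+x_{n-1})/x_n$, which expresses a value of $t$ through three consecutive values of $x$ and hence yields an \emph{upper} bound on $|t_{n+1}|$; feeding this into the second line of \eqref{rec:silver} then converts it into a \emph{lower} bound on the next term $|t_{n+2}|$.

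Concretely, I would argue in three short steps under the hypotheses $|x_N|>2$, $|x_{N+1}|\geq|x_N|$, $|x_{N+2}|\geq|x_{N+1}|$ (which in particular force $|x_{N+1}|>2$ and $|x_{N+2}|>2$, and $x_{N+1}\neq0$). First, from $x_{N+2}=x_{N+1}t_{N+2}-x_N$ and $|x_N|\leq|x_{N+1}|$,
\[
    |t_{N+2}|\leq\frac{|x_{N+2}|+|x_N|}{|x_{N+1}|}\leq\frac{|x_{N+2}|}{|x_{N+1}|}+1 .
\]
Second, from $t_{N+3}=x_{N+2}x_{N+1}-t_{N+2}$ together with $|x_{N+1}|>2$ (so that $|x_{N+1}|-1/|x_{N+1}|>\tfrac32$),
\[
    |t_{N+3}|\geq|x_{N+2}||x_{N+1}|-|t_{N+2}|\geq|x_{N+2}|\Bigl(|x_{N+1}|-\tfrac1{|x_{N+1}|}\Bigr)-1>\tfrac32|x_{N+2}|-1 .
\]
Third, from $x_{N+3}=x_{N+2}t_{N+3}-x_{N+1}$ and $|x_{N+1}|\leq|x_{N+2}|$,
\[
    |x_{N+3}|\geq|x_{N+2}||t_{N+3}|-|x_{N+1}|>|x_{N+2}|\Bigl(\tfrac32|x_{N+2}|-2\Bigr)>|x_{N+2}|,
\]
the final inequality using $|x_{N+2}|>2$, which gives $\tfrac32|x_{N+2}|-2>1$.

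It then remains to note that $|x_{N+1}|>2$, $|x_{N+2}|\geq|x_{N+1}|$ and $|x_{N+3}|\geq|x_{N+2}|$ are exactly the theorem's hypotheses with $N$ replaced by $N+1$, so induction gives $|x_{n+1}|\geq|x_n|$ for all $n>N$, whence $|x_n|\geq|x_N|>2$ for every $n\geq N$ and $\omega\in\S_N$. The only delicate point is the division by $x_n$ when inverting the recursion, but this is legitimate precisely because the growth hypotheses keep $|x_n|>2$ along the induction; the one real obstacle, namely obtaining two-sided bounds on the auxiliary sequence $t_n$, is resolved by that inversion. (If this estimate turned out to be too lossy, the fallback would be the Fricke-type conserved quantity $x_{n-1}^2+x_n^2+t_{n+1}^2-x_{n-1}x_nt_{n+1}$, which is independent of $n$ for the substitution $T_{n+1}=T_{n-1}T_n^2$ and characterises $t_{n+1}$ as a root of an explicit quadratic in $x_{n-1},x_n$; but the direct inversion above seems cleaner and avoids having to track the value of the invariant.)
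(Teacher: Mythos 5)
Your proof is correct, and it follows the paper's inductive skeleton exactly: upper-bound $|t_{N+2}|$, convert that into a lower bound on $|t_{N+3}|$ via the second line of \eqref{rec:silver}, and then deduce $|x_{N+3}|>|x_{N+2}|$ from the first line. The one genuine difference is the mechanism for the upper bound on $|t_{N+2}|$: the paper uses the unimodular-matrix trace inequality $\tr(T_{n-2}T_{n-1})\leq\tfrac12\bigl(\tr(T_{n-2}^2)+\tr(T_{n-1}^2)\bigr)$ (its equation \eqref{eq:tn1}), giving $|t_{N+2}|\leq x_{N+1}^2-2$ and hence the clean bound $|t_{N+3}|\geq 2$, whereas you invert the first recursion line to get $|t_{N+2}|\leq |x_{N+2}|/|x_{N+1}|+1$. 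Your bound is sharper and purely algebraic (it needs no matrix-theoretic input and would survive if one only knew the scalar recursion), at the cost of requiring $x_{N+1}\neq0$ and the validity of the recursion at index $N+1$ — both of which you correctly note are guaranteed by the hypotheses, the latter because $N\geq1$ forces $n=N+1\geq2$ in \eqref{rec:silver}. The paper's matrix inequality, by contrast, holds for every $n$ unconditionally, which is why it also gets reused in the precious-mean case (Theorem~\ref{thm:precious}); your inversion trick would be harder to adapt there since $d_m(x_n)$ replaces $x_n$ as the coefficient of $t_{n+1}$. Either way, the induction closes identically and the conclusion $\omega\in\S_N$ follows.
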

\begin{proof}
As for the golden mean Fibonacci tiling, the strategy will be to proceed by induction. We begin with the second equation of the recursion relation \eqref{rec:silver}, with a view to deriving a lower bound on $|t_{N+3}|$. Observe, first, that thanks to elementary properties of unimodular matrices
\begin{align} \label{eq:tn1}
    t_n = \tr(T_{n-2}T_{n-1}) 
    \leq \frac{1}{2} (\tr(T_{n-2}^2)+\tr(T_{n-1}^2))
    = \frac{1}{2}(x_{n-2}^2+x_{n-1}^2)-2,
\end{align}
for any $n$. In particular, since $|x_{N}|>2$ and $|x_{N+1}|>2$, the right hand side of \eqref{eq:tn1} is positive when $n=N+2$, so we have that 
\begin{equation}
    |t_{N+2}|\leq \frac{1}{2}(x_{N}^2+x_{N+1}^2)-2
    \leq x_{N+1}^2-2.
\end{equation}
Then, the second equation of \eqref{rec:silver} gives
\begin{equation}
    |t_{N+3}|\geq |x_{N+2}||x_{N+1}|-|t_{N+2}|
    \geq x_{N+1}^2-x_{N+1}^2+2=2.
\end{equation}
Finally, turning to the first equation of \eqref{rec:silver}, we see that 
\begin{equation}
    |x_{N+3}|\geq|x_{N+2}||t_{N+3}|-|x_{N+1}|
    \geq2|x_{N+2}|-|x_{N+1}|
    \geq|x_{N+2}|.
\end{equation}
Then, by induction, it follows that $|x_{n+1}|\geq|x_{n}|$ for all $n>N$.
\end{proof}

\subsection{Properties of the Chebyshev polynomials} \label{sec:polynomials}

\begin{figure}
    \centering
    \includegraphics[width=0.6\linewidth]{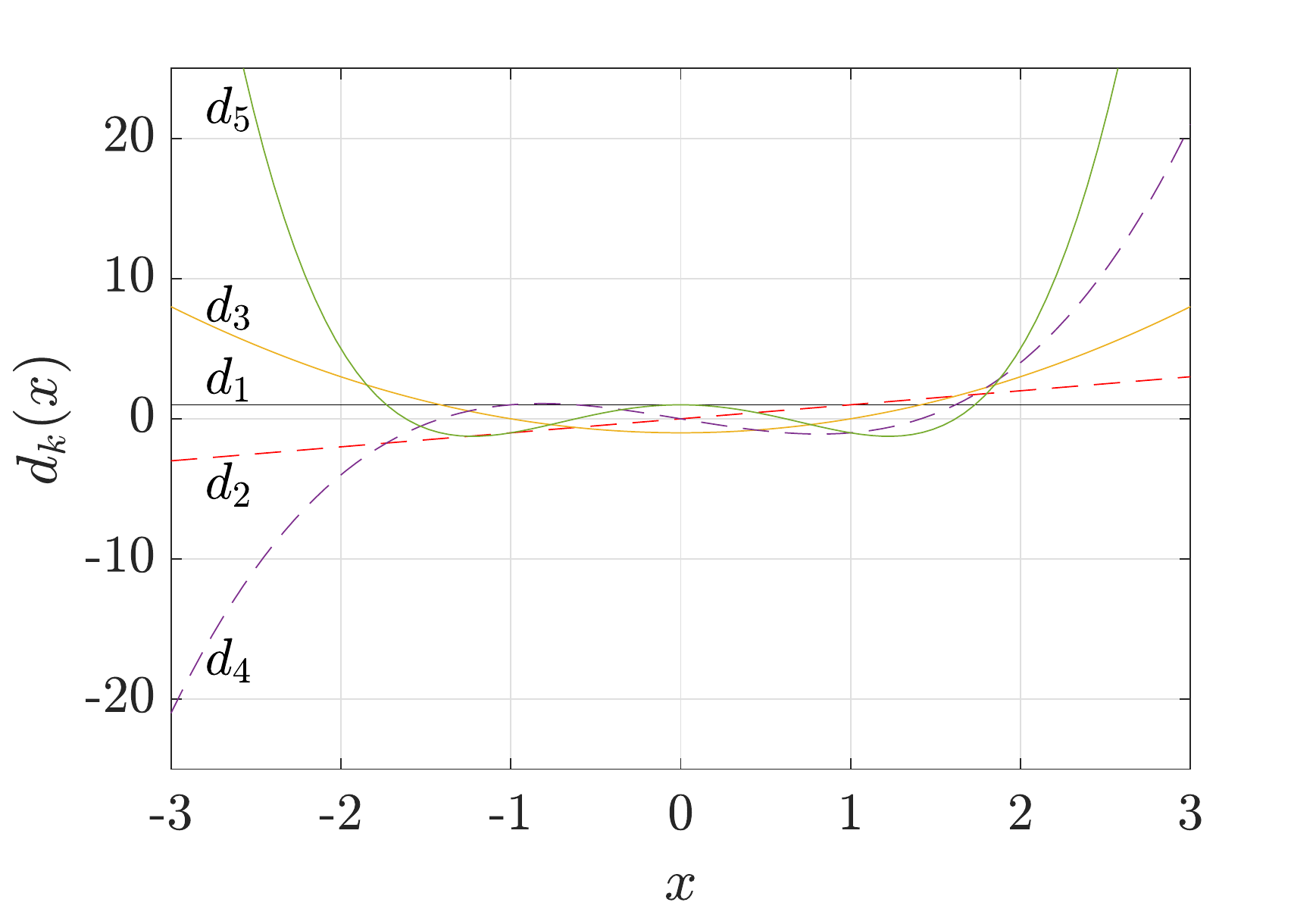}
    \caption{The first few Chebyshev polynomials $d_k(x)$, defined recursively in \eqref{defn:dk}. These functions play a crucial role in determining the behaviour of generalised Fibonacci tilings.}
    \label{fig:dkplot}
\end{figure}

Before proceeding to study super band gaps in more exotic generalised Fibonacci tilings, we must first prove some properties of the polynomials $d_k(x)$ defined in \eqref{defn:dk}. The first few $d_k(x)$ are plotted in Figure~\ref{fig:dkplot}, for reference. Using standard techniques (as in \emph{e.g.} Section~2.3 of \cite{bender1999advanced}), we can derive an explicit formula for $d_k(x)$, which is given by
\begin{equation} \label{eq:dkformula}
    d_k(x)=\frac{1}{\sqrt{x^2-4}} \left( \frac{x+\sqrt{x^2-4}}{2} \right)^k
    - \frac{1}{\sqrt{x^2-4}} \left( \frac{x-\sqrt{x^2-4}}{2} \right)^k
\end{equation}
for $k\in\mathbb{Z}^{\geq0}$ and $x\in(0,\infty)\setminus\{2\}$. To check the value of the solution at $x=2$, we have the following lemma:

\begin{lemma} \label{lem:dk2=k}
    $d_k(2)=k$ for all $k\geq 0$.
\end{lemma}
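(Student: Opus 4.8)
The plan is to prove the identity $d_k(2) = k$ by induction on $k$, using the recursion \eqref{defn:dk} directly rather than the explicit formula \eqref{eq:dkformula} (which is only valid for $x \neq 2$ and degenerates there). First I would check the base cases: $d_0(2) = 0$ and $d_1(2) = 1$, both of which hold by definition. Then, for the inductive step, I would assume $d_{k-1}(2) = k-1$ and $d_{k-2}(2) = k-2$ for some $k \geq 2$, and compute
\begin{equation*}
    d_k(2) = 2 d_{k-1}(2) - d_{k-2}(2) = 2(k-1) - (k-2) = k.
\end{equation*}
This closes the induction and gives the result for all $k \geq 0$.

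There is essentially no obstacle here: this is a routine two-step induction, and the only thing to be careful about is that one should not try to take a limit $x \to 2$ in \eqref{eq:dkformula}, since that would require an extra L'Hôpital-type argument; working from the defining recursion sidesteps this entirely. As an optional remark, one could note that this value is consistent with the limiting behaviour of the Chebyshev formula, since $\lim_{x\to 2^+} d_k(x) = k$ can be recovered by expanding $\left(\frac{x \pm \sqrt{x^2-4}}{2}\right)^k$ to first order in $\sqrt{x^2-4}$, but this is not needed for the proof.
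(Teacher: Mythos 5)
Your proof is correct and is essentially identical to the paper's: both verify the base cases $d_0(2)=0$, $d_1(2)=1$ and then run a two-step induction on the defining recursion evaluated at $x=2$. Your remark about avoiding the explicit formula \eqref{eq:dkformula} at $x=2$ is sensible but does not change the substance of the argument.
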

\begin{proof}
    This is true, by definition, for $k=0$ and $k=1$. If we suppose that it holds for arbitrary $k-1$ and $k$ then we have that
    \begin{equation}
        d_{k+1}(2)=2d_{k}(2)-d_{k-1}(2)=2k-(k-1)=k+1,
    \end{equation}
    so the result follows by induction on $k$.
\end{proof}

The definition \eqref{defn:dk}, alongside the formula \eqref{eq:dkformula}, can be used to study the properties of the sequence of polynomials. For example, it will be helpful to understand the parity of $d_k$:

\begin{lemma} \label{lem:parity}
    For $k\geq1$, if $k$ is odd then $d_k(x)$ contains only even powers of $x$ and if $k$ is odd then $d_k(x)$ contains only even powers of $x$.
\end{lemma}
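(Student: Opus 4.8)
The plan is to prove this by a straightforward induction on $k$, reading the parity off the three-term recurrence \eqref{defn:dk}. (I note in passing that the statement as typeset contains a typo: the correct dichotomy is that $d_k$ contains only even powers of $x$ when $k$ is \emph{odd} and only odd powers of $x$ when $k$ is \emph{even}; equivalently $d_k(-x)=(-1)^{k+1}d_k(x)$ for all $k\geq1$. This is consistent with $d_1=1$, $d_2=x$, $d_3=x^2-1$, $d_4=x^3-2x$, $d_5=x^4-3x^2+1$.)

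First I would dispatch the base cases separately: $d_1(x)=1$ consists only of the power $x^0$, which is even, and $d_2(x)=x$ consists only of the odd power $x^1$, so the claim holds for $k=1$ (odd) and $k=2$ (even). For the inductive step, fix $k\geq3$ and assume the claim for $k-1$ and $k-2$. Suppose $k$ is odd. Then $k-1$ is even, so by hypothesis $d_{k-1}$ contains only odd powers of $x$, and hence $xd_{k-1}$ contains only even powers; and $k-2$ is odd, so $d_{k-2}$ contains only even powers. Therefore $d_k=xd_{k-1}-d_{k-2}$, being a difference of polynomials each containing only even powers, contains only even powers. The case $k$ even is identical with the words ``even'' and ``odd'' interchanged, using that multiplication by $x$ flips the parity of every monomial. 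The result then follows by induction.

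There is no genuine obstacle here; the only points requiring a little care are (i) keeping track of the parity shift caused by the factor $x$ in the recurrence, and (ii) noting that the inductive step needs $k-2\geq1$, so the cases $k=1,2$ genuinely must be treated as base cases rather than absorbed into the induction. As an alternative one could instead argue from the closed form \eqref{eq:dkformula}, observing that the substitution $x\mapsto-x$ interchanges the two summands up to an overall sign of $(-1)^{k+1}$; but this requires bookkeeping the branch of $\sqrt{x^2-4}$, whereas the recurrence-based induction is cleaner and self-contained.
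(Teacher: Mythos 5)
Your proof is correct and follows essentially the same route as the paper: induction on $k$ using the three-term recurrence, with the first few polynomials as base cases. You also rightly flag the typo in the statement (the second clause should read ``if $k$ is even then $d_k(x)$ contains only odd powers of $x$''), and your write-up of the inductive step is in fact cleaner than the paper's.
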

\begin{proof}
    We can immediately check the first few terms: $d_1(x)=1$, $d_2(x)=x$, $d_3(x)=x^2-1$. Then, we suppose that the statement holds true for some $k$ and $k-1$, where $k$ is even. In which case $xd_k(x)$ contains only odd powers of $x$, meaning that $d_{k+1}(x) = x d_{k}(x)-d_{k-1}(x)$ contains only odd powers. A similar argument holds for odd $k$. The result follows by induction.
\end{proof}

A consequence of Lemma~\ref{lem:parity} is that $d_k$ is an even function when $k$ is odd and is an odd function when $k$ is even. This means it is sufficient to study its properties when $x>0$. We have the following results, which will allow us to derive bounds on these polynomials when $|x|>2$ (which is the domain of interest).

\begin{lemma} \label{lem:dkpositive}
    $d_k(x)\geq0$ and $d_k'(x)\geq0$ for all $k\geq 0$ and all $x\geq 2$, with equality holding only if $k=0$.
\end{lemma}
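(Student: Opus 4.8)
The plan is to prove both claims simultaneously by induction on $k$, using the recursion \eqref{defn:dk} rather than the explicit formula \eqref{eq:dkformula}, since the recursion keeps the signs transparent. The base cases are immediate: $d_0(x)=0$, $d_1(x)=1$, so $d_0\equiv 0$ (equality, as allowed), $d_1\equiv 1>0$, and $d_0'=d_1'=0\geq 0$. Because we will need strict positivity for $k\geq 1$ to push the induction for the derivative, I would actually strengthen the inductive hypothesis to: for $x\geq 2$ and $k\geq 1$, $d_k(x)\geq k>0$ and $d_k'(x)\geq 0$, and also record the monotonicity-in-$k$ fact $d_k(x)\geq d_{k-1}(x)$ for $x\geq 2$ (this follows since $d_k-d_{k-1} = (x-1)d_{k-1}-d_{k-2} \geq d_{k-1}-d_{k-2}$ for $x\geq 2$, closing by induction from $d_1-d_0=1\geq 0$).

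For the positivity of $d_k$ itself: assuming $d_{k-1}(x)\geq d_{k-2}(x)\geq 0$ on $x\geq 2$, the recursion gives $d_k(x) = x d_{k-1}(x) - d_{k-2}(x) \geq 2 d_{k-1}(x) - d_{k-2}(x) \geq d_{k-1}(x) + (d_{k-1}(x)-d_{k-2}(x)) \geq d_{k-1}(x)$, so by induction $d_k(x)\geq d_1(x) = 1 > 0$ for all $k\geq 1$, with equality to $0$ only when $k=0$. (Lemma~\ref{lem:dk2=k} already pins down $d_k(2)=k$, consistent with this.) For the derivative, differentiate the recursion: $d_k'(x) = d_{k-1}(x) + x d_{k-1}'(x) - d_{k-2}'(x)$. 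Assuming inductively that $d_{k-1}'\geq d_{k-2}' \geq 0$ on $x\geq 2$ (again strengthening to monotonicity in $k$ of the derivative), we get $d_k'(x) \geq d_{k-1}(x) + 2 d_{k-1}'(x) - d_{k-2}'(x) \geq d_{k-1}(x) + d_{k-1}'(x) \geq d_{k-1}'(x) \geq 0$, using $d_{k-1}(x)\geq 0$ from the first part. The base case $d_1' = 0 \geq d_0' = 0$ holds, so this closes.

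The one point requiring a little care — and the main (mild) obstacle — is making sure the two inductions are bundled correctly: the derivative step consumes the already-established nonnegativity of $d_{k-1}$, and both steps rely on the auxiliary monotonicity-in-$k$ statements $d_k\geq d_{k-1}$ and $d_k'\geq d_{k-1}'$, so the cleanest write-up carries the conjunction ``$d_k(x)\geq d_{k-1}(x)\geq 0$ and $d_k'(x)\geq d_{k-1}'(x)\geq 0$ for all $x\geq 2$'' through the induction as a single hypothesis. Once that is in place, everything reduces to the elementary inequality $x\geq 2$ applied termwise, with no need to invoke \eqref{eq:dkformula} at all. Finally, the equality clause: every inequality above is strict as soon as $k\geq 1$ for the value $d_k(x)\geq 1$, so $d_k(x)=0$ forces $k=0$; for the derivative, $d_k'$ genuinely vanishes at $x=2$ only in low cases but the statement only claims $d_k'\geq 0$, so no sharper equality analysis is needed there.
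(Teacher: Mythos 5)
Your proof is correct, and it takes a genuinely different route from the paper. The paper splits the two claims: positivity of $d_k$ is obtained from $d_k(2)=k$ (Lemma~\ref{lem:dk2=k}) together with the closed form \eqref{eq:dkformula}, while nonnegativity of $d_k'$ is obtained by identifying $d_k$ with a tridiagonal determinant and combining Jacobi's formula with the Gershgorin circle theorem to show the inverse has positive trace. You instead run a single induction on the recursion \eqref{defn:dk}, carrying the bundled hypothesis $d_k\geq d_{k-1}\geq 0$ and $d_k'\geq d_{k-1}'\geq 0$ on $x\geq 2$; the step $d_k'=d_{k-1}+xd_{k-1}'-d_{k-2}'\geq d_{k-1}+d_{k-1}'\geq d_{k-1}'$ closes cleanly, and the positivity step consumes only $x\geq 2$. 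Your approach is more elementary (no closed form, no linear algebra) and it delivers the paper's subsequent monotonicity lemma $d_{k+1}\geq d_k$ for free, since that inequality is exactly your carried hypothesis; it also yields strictness $d_k'\geq d_{k-1}\geq 1>0$ for $k\geq 2$ if one wants it. You are also right to flag that $d_1'\equiv 0$, so the ``equality only if $k=0$'' clause can only sensibly refer to the value $d_k(x)$, not the derivative — the paper's own proof glosses over this (and its determinant identification is off by one, since $\det M_k=d_{k+1}$), so your reading is the correct one.
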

\begin{proof}
    This is trivial for $k=0$, so we consider $k\geq1$. From Lemma~\ref{lem:dk2=k}, we have that $d_k(2)=k>0$ for all $k\geq 1$. For $x>2$, it holds that $x+\sqrt{x^2-4}>x-\sqrt{x^2-4}>0$. Thus, since $x\mapsto x^k$ is strictly increasing for $x\geq0$, it follows that 
    \begin{equation}
        \left(x+\sqrt{x^2-4}\right)^k-\left(x-\sqrt{x^2-4}\right)^k>0.
    \end{equation}
    So, using the formula \eqref{eq:dkformula}, we find that $d_k(x)>0$ for $k\geq1$ and $x>2$.

    To handle the derivative, we notice that $d_k(x)$ is the determinant of the $k\times k$ tridiagonal matrix $M_k(x)$ given by
    \begin{equation}
        M_k(x)_{ij}=\begin{cases}
            x & \text{if } i=j,\\
            1 & \text{if } i-j=\pm1,\\
            0 & \text{otherwise}.
        \end{cases}
    \end{equation}
    Since $d_k(x)>0$ for $k\geq1$ and $x\geq2$, $M_k(x)$ must be invertible. Hence, we can use Jacobi's formula to see that
    \begin{equation} \label{eq:jacobi}
        \frac{\mathrm{d}}{\mathrm{d}x}d_k(x) = \frac{\mathrm{d}}{\mathrm{d}x} \det(M_k(x)) 
        = \det(M_k(x)) \,\tr\bigg( M_k(x)^{-1} \frac{\mathrm{d}}{\mathrm{d}x}M_k(x) \bigg)
        = d_k(x) \,\tr( M_k(x)^{-1}),
    \end{equation}
     where we have used the fact that the derivative of $M_k(x)$ with respect to $x$ is the identity matrix. 
     
    To deal with $\tr( M_k(x)^{-1})$, we will show that $M_k(x)$ has strictly positive eigenvalues whenever $k\geq1$ and $x\geq2$. For $x>2$, this follows immediately from the Gershgorin circle theorem. When $x=2$, Gershgorin circle theorem permits eigenvalues to vanish, but this is forbidden by the invertibility of $M_k(x)$. Thus, if $k\geq1$ and $x\geq2$, then $M_k(x)$ has strictly positive eigenvalues $\lambda_1(x),\dots,\lambda_k(x)$. Finally, using the fact that $M_k(x)$ is symmetric and positive definite, we can compute that
     \begin{equation} \label{eq:trace}
         \tr( M_k(x)^{-1})=\sum_{i=1}^k \lambda_i(x)^{-1} >0.
     \end{equation}
     Combining this with the fact that $d_k(x)>0$, \eqref{eq:jacobi} tells us that $d_k'(x)>0$ for all $k\geq1$ and $x\geq2$.
\end{proof}

\begin{cor} \label{cor:dk>2}
    $|d_k(x)|\geq2$ for all $k\geq2$ and all $|x|\geq2$.
\end{cor}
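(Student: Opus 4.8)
The plan is to reduce immediately to the case $x\geq 2$ using the parity established in Lemma~\ref{lem:parity}, and then invoke the monotonicity from Lemma~\ref{lem:dkpositive} together with the boundary value from Lemma~\ref{lem:dk2=k}.

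First I would note that by Lemma~\ref{lem:parity}, for each fixed $k$ the polynomial $d_k$ is either an even or an odd function of $x$; in either case $|d_k(-x)|=|d_k(x)|$, so it suffices to prove the bound for $x\geq 2$. Next, fix $k\geq 2$ and restrict to $x\geq 2$. By Lemma~\ref{lem:dkpositive}, $d_k(x)\geq 0$ and $d_k'(x)\geq 0$ on $[2,\infty)$, so $d_k$ is non-decreasing there and hence $d_k(x)\geq d_k(2)$. Finally, Lemma~\ref{lem:dk2=k} gives $d_k(2)=k\geq 2$. Combining these, $|d_k(x)|=d_k(x)\geq k\geq 2$ for all $x\geq 2$, and the parity reduction then yields $|d_k(x)|\geq 2$ for all $|x|\geq 2$.

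There is essentially no obstacle here: the corollary is a direct consequence of the three preceding lemmas, and the only point requiring a moment's care is the sign bookkeeping in the parity step, which is handled by observing that $|d_k|$ is an even function of $x$ regardless of the parity of $d_k$ itself. One could also phrase the argument without monotonicity by writing $x=2\cosh\theta$ with $\theta\geq 0$ in the explicit formula \eqref{eq:dkformula}, so that $d_k(x)=\sinh(k\theta)/\sinh\theta$, which is clearly at least $k$ for $k\geq 1$; but the monotonicity route via the already-proven lemmas is cleaner and avoids reintroducing the $x=2$ limiting case separately.
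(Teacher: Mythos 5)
Your argument is correct and is essentially the paper's own proof: both combine the monotonicity of $d_k$ on $[2,\infty)$ from Lemma~\ref{lem:dkpositive} with the value $d_k(2)=k$ from Lemma~\ref{lem:dk2=k}, and then extend to $x\leq-2$ by the parity of $d_k$. The additional hyperbolic-substitution remark is a valid alternative but not needed.
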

\begin{proof}
    This follows by combining Lemma~\ref{lem:dkpositive} with Lemma~\ref{lem:dk2=k}, for $x>2$. Then, the result for $x<-2$ follows by parity.
\end{proof}

\begin{lemma}
    $d_{k+1}(x)\geq d_k(x)$ for all $k\geq0$ and all $x\geq2$.
\end{lemma}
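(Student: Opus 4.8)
The plan is to proceed by induction on $k$, using the recursion \eqref{defn:dk} together with the nonnegativity already established in Lemma~\ref{lem:dkpositive}. The recursion in \eqref{defn:dk} only applies for $k \geq 2$, so I would first dispatch the two base cases by hand: $d_1(x) = 1 \geq 0 = d_0(x)$, and $d_2(x) = x \geq 1 = d_1(x)$ for every $x \geq 2$.

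For the inductive step, suppose $d_k(x) \geq d_{k-1}(x)$ for some $k \geq 1$ and all $x \geq 2$. Applying \eqref{defn:dk} and then Lemma~\ref{lem:dkpositive} (which supplies $d_k(x) \geq 0$), I would estimate
$$d_{k+1}(x) = x\, d_k(x) - d_{k-1}(x) \;\geq\; x\, d_k(x) - d_k(x) \;=\; (x-1)\, d_k(x) \;\geq\; d_k(x),$$
where the first inequality is the inductive hypothesis and the last uses $x - 1 \geq 1$ together with $d_k(x) \geq 0$. This closes the induction and gives the claim.

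A slightly slicker variant, which also shows the inequality is strict, is to set $e_k(x) := d_{k+1}(x) - d_k(x)$ and note from \eqref{defn:dk} that $e_k - e_{k-1} = (x-2)\, d_k$ for $k \geq 1$. Since $x - 2 \geq 0$ and $d_k(x) \geq 0$ on $[2,\infty)$ by Lemma~\ref{lem:dkpositive}, the sequence $(e_k)$ is nondecreasing; combined with $e_0 = d_1 - d_0 = 1$, telescoping yields $e_k(x) = 1 + (x-2)\sum_{j=1}^{k} d_j(x) \geq 1 > 0$.

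There is no serious obstacle in this argument; the only points requiring care are that the three-term recursion is valid only for $k \geq 2$, so the cases $k = 0, 1$ must be verified directly, and that the estimate genuinely relies on the sign information $d_k(x) \geq 0$ from Lemma~\ref{lem:dkpositive} rather than on the recursion alone.
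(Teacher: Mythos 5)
Your main argument is correct and is essentially the paper's proof: both proceed by induction using the three-term recursion together with the nonnegativity of $d_k$ from Lemma~\ref{lem:dkpositive}, differing only in whether one first bounds $x$ below by $2$ or first invokes the inductive hypothesis. The telescoping variant with $e_k = d_{k+1}-d_k$ is a nice bonus (it gives strictness), but the core route matches the paper.
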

\begin{proof}
    This is true for $k=0$, from the definition. Then, supposing that $d_{k}(x)\geq d_{k-1}(x)$, 
    \begin{equation}
        d_{k+1}(x)=xd_k(x)-d_{k-1}(x)
        \geq 2d_k(x)-d_{k-1}(x) = d_k(x)+(d_k(x)-d_{k-1}(x))
        \geq d_k(x),
    \end{equation}
    where the first inequality relies on the fact that $d_k(x)\geq0$ from Lemma~\ref{lem:dkpositive}. Finally, the result follows by induction on $k$.
\end{proof}

Using the odd/even parity of the polynomials $d_k$, we have the following corollary:

\begin{cor} \label{lem:dkgrowth}
    $|d_{k+1}(x)|\geq |d_k(x)|$ for all $k\geq0$ and all $|x|\geq2$.
\end{cor}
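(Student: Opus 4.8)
The plan is to deduce the corollary from the immediately preceding lemma (that $d_{k+1}(x)\geq d_k(x)$ for $x\geq 2$) together with the nonnegativity from Lemma~\ref{lem:dkpositive} and the parity observation following Lemma~\ref{lem:parity}. There is no real obstacle here; it is a short two-case argument.

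First I would treat the case $x\geq 2$. By Lemma~\ref{lem:dkpositive} we have $d_k(x)\geq 0$ and $d_{k+1}(x)\geq 0$, so the previous lemma gives $|d_{k+1}(x)| = d_{k+1}(x)\geq d_k(x) = |d_k(x)|$, as required.

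Next I would handle $x\leq -2$ by parity. Recall from the discussion after Lemma~\ref{lem:parity} that each $d_k$ is either an even or an odd function of $x$; in either case $|d_k(-x)| = |d_k(x)|$ for all $x$. Hence, given $x\leq -2$, set $y:=-x\geq 2$ and apply the first case to obtain
\begin{equation*}
    |d_{k+1}(x)| = |d_{k+1}(y)| \geq |d_k(y)| = |d_k(x)|.
\end{equation*}
Combining the two cases proves the claim for all $k\geq 0$ and all $|x|\geq 2$. The only point requiring any care is to invoke the correct parity statement so that absolute values are genuinely preserved under $x\mapsto -x$; everything else is immediate from the results already established.
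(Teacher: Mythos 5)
Your proposal is correct and follows exactly the route the paper intends: the paper states this corollary without proof, noting only that it follows from the preceding lemma together with the odd/even parity of the $d_k$, which is precisely the two-case argument you spell out. No issues.
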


The final property of the polynomials $d_k(x)$ that we will need is the following inequality:

\begin{lemma} \label{lem:newgrowth}
    $|d_{k+1}(x)| \leq |xd_{k}(x)| \leq 2 |d_{k+1}(x)|$ for any $|x|>2$  and any $k\geq1$.
\end{lemma}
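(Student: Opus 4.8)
The plan is to reduce immediately to the case $x>2$ and then read everything off the basic recursion. By Lemma~\ref{lem:parity}, each $d_k$ is either even or odd, so $|d_k(x)|=|d_k(|x|)|$ for every $k$; hence it suffices to prove $d_{k+1}(x)\leq x\,d_k(x)\leq 2\,d_{k+1}(x)$ for $x>2$ and $k\geq1$, where by Lemma~\ref{lem:dkpositive} all the quantities $d_0(x),d_1(x),\dots$ are nonnegative.

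The key identity is just a rearrangement of the defining recursion \eqref{defn:dk}: for $k\geq1$,
\begin{equation*}
    x\,d_k(x) = d_{k+1}(x) + d_{k-1}(x).
\end{equation*}
For the left-hand inequality, this gives $x\,d_k(x) - d_{k+1}(x) = d_{k-1}(x) \geq 0$ by Lemma~\ref{lem:dkpositive} (valid since $k-1\geq0$), so $d_{k+1}(x)\leq x\,d_k(x)$. For the right-hand inequality, the same identity gives $2\,d_{k+1}(x) - x\,d_k(x) = d_{k+1}(x) - d_{k-1}(x)$, and this is nonnegative because the preceding lemma ($d_{j+1}(x)\geq d_j(x)$ for $x\geq2$) applied with $j=k$ and $j=k-1$ yields $d_{k+1}(x)\geq d_k(x)\geq d_{k-1}(x)$. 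Hence $x\,d_k(x)\leq 2\,d_{k+1}(x)$. Restoring absolute values via the parity reduction completes the argument.

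I do not expect a real obstacle here: the proof is essentially two one-line estimates built on the recursion plus the monotonicity and positivity facts already established. The only points requiring a little care are the parity reduction at the start (so that $|x|>2$ can be replaced by $x>2$ without loss) and the boundary index $k=1$, where $d_{k-1}(x)=d_0(x)=0$ makes the left inequality an equality ($x\,d_1(x)=x=d_2(x)$); this is consistent with the non-strict statement, so no separate treatment is needed.
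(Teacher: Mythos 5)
Your proof is correct. The first inequality is handled exactly as in the paper: rearranging the recursion to $x\,d_k(x)=d_{k+1}(x)+d_{k-1}(x)$ and invoking nonnegativity of $d_{k-1}$ from Lemma~\ref{lem:dkpositive}, after the same parity reduction to $x>2$. For the second inequality, however, you take a genuinely different route. The paper switches to the explicit closed-form expression \eqref{eq:dkformula}, factors out one power of $\frac{x\pm\sqrt{x^2-4}}{2}$ from each term, and bounds those factors below by $x/2$ and $-x/2$ respectively to get $d_{k+1}(x)\geq\tfrac{1}{2}x\,d_k(x)$. You instead stay entirely within the recursion: writing $2d_{k+1}(x)-x\,d_k(x)=d_{k+1}(x)-d_{k-1}(x)$ and concluding nonnegativity from the monotonicity lemma $d_{j+1}(x)\geq d_j(x)$ applied twice. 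Your version is more elementary and arguably cleaner, since it reuses only facts already established in Section~\ref{sec:polynomials} and avoids any manipulation of the surds in \eqref{eq:dkformula}; the paper's version has the minor advantage of being self-contained in the sense that it does not rely on the intermediate monotonicity lemma. Your remarks on the parity reduction and on the equality case $k=1$ (where $d_0=0$ forces $x\,d_1(x)=d_2(x)$) are both accurate and consistent with the non-strict inequalities in the statement.
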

\begin{proof}
    Thanks to the parity of $d_k$, we can consider $x>2$ without loss of generality, in which case $d_k(x)\geq0$ for all $k$. For the first inequality, we have that
    \begin{equation}
        0\leq d_{k-1}(x) = xd_{k}(x) - d_{k+1}(x),
    \end{equation}
    so $d_{k+1}(x)\leq xd_{k}(x)$. To see the second inequality, we must use the formula \eqref{eq:dkformula}. It holds that 
    \begin{equation}
        d_{k+1}(x)=\frac{1}{\sqrt{x^2-4}} \frac{x+\sqrt{x^2-4}}{2}\left( \frac{x+\sqrt{x^2-4}}{2} \right)^k
    - \frac{1}{\sqrt{x^2-4}} \frac{x-\sqrt{x^2-4}}{2} \left( \frac{x-\sqrt{x^2-4}}{2} \right)^k.
    \end{equation}
    We have that $x+\sqrt{x^2-4}\geq x$ and $-(x-\sqrt{x^2-4})\geq-x$, from which we see that $d_{k+1}(x)\geq \frac{1}{2}x d_k(x)$.
\end{proof}

\subsection{Generalised precious mean Fibonacci}

Generalised Fibonacci tilings with $l=1$ and arbitrary $m$ are known as \emph{precious mean} Fibonacci tilings (generalizing the notions of golden and silver means for $m=1$ and $m=2$, respectively). In this case the recursion relation \eqref{rec:general} reads
\begin{equation} \label{rec:precious}
    \begin{cases}
        x_{n+1}=d_m(x_n)t_{n+1}-d_{m-1}(x_n)x_{n-1},\\
        t_{n+1}=d_{m+1}(x_{n-1})t_n-d_m(x_{n-1})x_{n-2}.
    \end{cases}
\end{equation}
for $n\geq 2$. In order to develop a precise theory for super band gaps when $m>2$, we will need to assume that the sequence of traces has at least polynomial growth, with order $m-1$. This is consistent with the rule that was established for the silver mean in Theorem~\ref{thm:silver}. In fact, we will need that terms grow such that $|x_{n+1}|\geq |d_{m-1}(x_n)x_n|$. This is made precise by the following theorem.

\begin{thm} \label{thm:precious}
    Let $\omega\in\mathbb{R}$ and consider $x_n(\omega)$ satisfying the generalised precious mean recursion relation \eqref{rec:precious} for some $m\geq2$. Suppose that there exists some $N\in\mathbb{N}$ such that
    \begin{equation*}
        |x_{N}|>2, \quad
        |x_{N+1}|\geq|d_{m-1}(x_N)x_{N}|\quad\text{and}\quad
        |x_{N+2}|\geq|d_{m-1}(x_{N+1})x_{N+1}|.
    \end{equation*}
    Then $|x_{n+1}|\geq|d_{m-1}(x_n)x_{n}|$ for all $n>N$.
    Consequently, $|x_n|>2$ for all $n\geq N$, meaning that $\omega$ is in the super band gap $\S_N$.
\end{thm}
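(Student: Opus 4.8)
The plan is to mirror the inductive strategy used for the silver mean in Theorem~\ref{thm:silver}, but with the scalar multiplier $x_n$ replaced by the polynomial factor $d_{m-1}(x_n)x_n$ throughout, using the Chebyshev polynomial bounds from Section~\ref{sec:polynomials} to control the more complicated recursion \eqref{rec:precious}. The induction hypothesis will be the pair of statements $|x_{n+1}|\geq|d_{m-1}(x_n)x_n|$ and a suitable upper bound on $|t_{n+1}|$, and I would prove that these propagate from step $n$ to step $n+1$. Since $|x_{n+1}|\geq|d_{m-1}(x_n)x_n|\geq|d_{m-1}(x_n)|\cdot 2\geq 4$ by Corollary~\ref{cor:dk>2} (using $m\geq2$, so $d_{m-1}$ is at least $d_1=1$ in magnitude, and in fact $\geq 2$ when $m\geq 3$), the hypothesis immediately forces $|x_n|>2$ for all $n\geq N$, which is the desired conclusion about $\S_N$; so the real content is the propagation of the growth inequality.

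First I would establish the base of the induction, deriving an upper bound for $|t_{N+2}|$ analogous to \eqref{eq:tn1}: the elementary unimodular inequality $t_n\leq\frac12(x_{n-2}^2+x_{n-1}^2)-2$ still holds, so $|t_{N+2}|\leq x_{N+1}^2-2$ when $|x_N|,|x_{N+1}|>2$. Next, from the second equation of \eqref{rec:precious} I would bound $|t_{N+3}|$ from below: $|t_{N+3}|\geq|d_{m+1}(x_{N+1})|\,|t_{N+2}|^{-}\!$---here I need to be careful, since unlike the silver case the leading term $d_{m+1}(x_{N+1})t_{N+2}$ is multiplied by a large polynomial, so I would instead aim to show $|t_{N+3}|$ is large enough, say $|t_{N+3}|\geq|d_m(x_{N+2})|$ or similar, by combining $|d_{m+1}(x_{N+1})t_{N+2}|$ against $|d_m(x_{N+1})x_N|$ and using Lemma~\ref{lem:newgrowth} ($|d_{m+1}(x)|\leq|xd_m(x)|\leq 2|d_{m+1}(x)|$) and Corollary~\ref{lem:dkgrowth} to compare the two Chebyshev factors. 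Finally, from the first equation of \eqref{rec:precious}, $|x_{N+3}|\geq|d_m(x_{N+2})|\,|t_{N+3}|-|d_{m-1}(x_{N+2})x_{N+1}|$, and I would want the right-hand side to dominate $|d_{m-1}(x_{N+2})x_{N+2}|$; since $|x_{N+1}|\leq|x_{N+2}|$ follows from the growth hypothesis (as $|d_{m-1}(x_{N+1})|\geq 1$), it suffices to show $|d_m(x_{N+2})t_{N+3}|\geq 2|d_{m-1}(x_{N+2})x_{N+2}|$, which would reduce via Lemma~\ref{lem:newgrowth} to a lower bound of the form $|t_{N+3}|\gtrsim|d_{m-1}(x_{N+2})| \cdot$(constant), i.e. to the bound on $t_{N+3}$ sought in the previous step.

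The main obstacle I anticipate is getting the bookkeeping of the Chebyshev factors exactly right in the $t_{N+3}$ estimate: the term $d_m(x_{N+1})x_N$ subtracted in the second line of \eqref{rec:precious} must be shown small relative to $d_{m+1}(x_{N+1})t_{N+2}$, and this comparison is not purely in terms of magnitudes of $x$'s but mixes Chebyshev polynomials of different indices evaluated at different arguments. The clean way through is probably to prove the slightly stronger induction hypothesis $|t_{n+1}|\geq|d_{m-1}(x_{n-1})|\cdot c$ for an appropriate constant $c\geq 2$ (chosen so that everything closes), carried alongside the $x$-growth hypothesis, and to check all base cases at $n=N$ simultaneously. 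One subtlety to watch is the sign of the right-hand side of the unimodular inequality and whether intermediate $t$'s could be negative or small — but since each lower bound I derive is a genuine magnitude inequality $|t_{n+3}|\geq(\text{positive quantity})$, and $|x_n|>2$ is maintained inductively, these reverse-triangle steps remain valid throughout.

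\begin{proof}
\textit{[Proof sketch — to be completed.]} We argue by induction, with hypothesis that $|x_{n+1}|\geq|d_{m-1}(x_n)x_n|$ and $|t_{n+1}|\geq 2|d_{m-1}(x_{n-1})|$ for the relevant range of $n$; the base cases follow from the assumptions together with the elementary unimodular bound $t_n\leq\tfrac12(x_{n-2}^2+x_{n-1}^2)-2$, Lemma~\ref{lem:dk2=k}, Corollary~\ref{cor:dk>2}, Corollary~\ref{lem:dkgrowth} and Lemma~\ref{lem:newgrowth}. The inductive step uses the two equations of \eqref{rec:precious} in turn: the second yields the lower bound on $|t_{n+3}|$ after estimating the subtracted term $d_m(x_{n+1})x_n$ via Lemma~\ref{lem:newgrowth}, and the first then yields $|x_{n+3}|\geq|d_{m-1}(x_{n+2})x_{n+2}|$ using Lemma~\ref{lem:newgrowth} once more. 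Since $|d_{m-1}(x)|\geq1$ for $m\geq2$ and $|x|\geq2$, the growth inequality forces $|x_n|>2$ for all $n\geq N$, so $\omega\in\S_N$.
\end{proof}
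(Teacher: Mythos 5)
Your overall strategy (induction on the growth inequality $|x_{n+1}|\geq|d_{m-1}(x_n)x_n|$, the unimodular trace bound \eqref{eq:tn1}, and Lemma~\ref{lem:newgrowth}) is in the right spirit, but there is a genuine gap at the central step: your plan for bounding $|t_{N+3}|$ from below does not close. Working directly with the second line of \eqref{rec:precious}, $t_{N+3}=d_{m+1}(x_{N+1})t_{N+2}-d_m(x_{N+1})x_{N}$, a useful lower bound on $|t_{N+3}|$ requires either a lower bound on $|t_{N+2}|$ (so the first term dominates) or a tight upper bound on $|d_{m+1}(x_{N+1})t_{N+2}|$ (so the second term dominates). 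The second route fails because the only available upper bound, $|t_{N+2}|\leq x_{N+1}^2-2$, makes $|d_{m+1}(x_{N+1})t_{N+2}|$ potentially far larger than $|d_m(x_{N+1})x_N|$. The first route --- your auxiliary hypothesis $|t_{n+1}|\geq 2|d_{m-1}(x_{n-1})|$ --- fails at the base case: the theorem's hypotheses constrain only $x_N,x_{N+1},x_{N+2}$, and the most one can extract for $t_{N+2}$ (by solving the first line of \eqref{rec:precious}) is $|t_{N+2}|\geq\bigl(|x_{N+2}|-|d_{m-1}(x_{N+1})x_N|\bigr)/|d_m(x_{N+1})|$, which under the stated hypotheses is not bounded below by $2|d_{m-1}(x_N)|\geq 4$. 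The unimodular inequality you cite is an \emph{upper} bound and cannot supply this; the asserted base case is simply not available.

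The paper's proof avoids the problem by an algebraic rearrangement that your proposal is missing: solve the first line of \eqref{rec:precious} for $d_m(x_{n-1})t_n=x_n+d_{m-1}(x_{n-1})x_{n-2}$ and substitute into the second line, using $d_{m+1}(x)=xd_m(x)-d_{m-1}(x)$, to obtain $t_{n+1}=x_{n-1}x_n+d_{m-2}(x_{n-1})x_{n-2}-d_{m-1}(x_{n-1})t_n$. In this form the coefficient of $t_n$ is the small polynomial $d_{m-1}$ rather than $d_{m+1}$, and the large positive term $x_{n-1}x_n$ appears explicitly, so the \emph{upper} bound $|t_{N+2}|\leq x_{N+1}^2-2$ together with the hypothesis $|x_{N+2}|\geq|d_{m-1}(x_{N+1})x_{N+1}|$ already yields $|t_{N+3}|\geq|d_{m-1}(x_{N+1})|\geq 4$; no lower bound on $t_{N+2}$ is ever needed. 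After that, your final step (the first line of \eqref{rec:precious} plus Lemma~\ref{lem:newgrowth} and $|x_{N+2}|\geq|x_{N+1}|$) goes through essentially as you describe. Without this rearrangement, or some substitute for it, your induction cannot be started.
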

\begin{proof}
The special case $m=2$ is exactly the result that was proved in Theorem~\ref{thm:silver}, since $d_1(x)=1$. We will consider $m\geq3$. We begin by rewriting the recursion relation \eqref{rec:precious} in this case. From the first equation of \eqref{rec:precious}, we have that 
\begin{equation} \label{eq:rearrange1}
    d_m(x_{n-1})t_{n}=x_{n}+d_{m-1}(x_{n-1})x_{n-2}.
\end{equation}
Turning to the second equation of \eqref{rec:precious}, using the definition of $d_k$ and substituting \eqref{eq:rearrange1} gives
\begin{align} \label{eq:rearranget}
    t_{n+1}= x_{n-1}x_n +d_{m-2}(x_{n-1})x_{n-2} -d_{m-1}(x_{n-1})t_n.
\end{align}

An important observation is that, thanks to Corollary~\ref{cor:dk>2}, the hypotheses of this theorem imply that $|x_{N+2}|\geq|x_{N+1}|\geq|x_{N}|>2$. This is important as $\omega$ could not be in the super band gap $\S_N$ otherwise. It also allows us to use the inequality \eqref{eq:tn1} to see that 
\begin{equation}
    |t_{N+2}| \leq x_{N+1}^2-2.
\end{equation}
Then, from \eqref{eq:rearranget}, we have that 
\begin{align}
    |t_{N+3}|&\geq |x_{N+1}x_{N+2}| -|d_{m-2}(x_{N+1})x_{N}| -|d_{m-1}(x_{N+1})t_{N+2}| \nonumber \\
    &\geq |x_{N+1}x_{N+2}| -|d_{m-2}(x_{N+1})x_{N}| +2|d_{m-1}(x_{N+1})| -|x_{N+1}^2d_{m-1}(x_{N+1})| \nonumber \\
    &\geq -|d_{m-2}(x_{N+1})x_{N}| +2|d_{m-1}(x_{N+1})|, \label{eq:tN3}
\end{align}
where the last inequality follows by hypothesis.

To deal with \eqref{eq:tN3}, we must turn to Lemma~\ref{lem:newgrowth}. Since $|x_N|>2$, $|d_{m-1}(x_N)|\geq |d_2(x_N)|\geq d_2(2)=2$. As a result, the assumption that $|x_{N+1}|\geq|x_{N} d_{m-1}(x_N)|$ implies that $|x_{N+1}|\geq2|x_{N}|> 4$. Consequently, we have that
\begin{equation}
    |d_{m-2}(x_{N+1})x_{N}|\leq \frac{1}{2}|d_{m-2}(x_{N+1})x_{N+1}|\leq |d_{m-1}(x_{N+1})|.
\end{equation}
Using this inequality, \eqref{eq:tN3} gives us that 
\begin{equation}
    |t_{N+3}|\geq  |d_{m-1}(x_{N+1})|
    \geq d_{2}(4)
    = 4.
\end{equation}

We can now turn to the first equation of \eqref{rec:precious}, which gives us that
\begin{equation}
    |x_{N+3}|\geq|d_m(x_{N+2})t_{N+3}|-|d_{m-1}(x_{N+2})x_{N+1}| 
    \geq 4|d_m(x_{N+2})|-|d_{m-1}(x_{N+2})x_{N+1}|.
\end{equation}
Using Lemma~\ref{lem:newgrowth} again, we have that
\begin{equation}
    |x_{N+3}|
    \geq 2|d_{m-1}(x_{N+2})x_{N+2}|-|d_{m-1}(x_{N+2})x_{N+1}|
    \geq |d_{m-1}(x_{N+2})x_{N+2}|.
\end{equation}
where the second inequality follows from the fact that $|x_{N+2}|\geq|x_{N+1}|$. Proceeding by induction gives us that $|x_{n+1}|\geq|d_{m-1}(x_n)x_{n}|$ for all $n>N$. Thanks to Corollary~\ref{cor:dk>2}, we see also that $|x_n|\geq|x_N|>2$ for all $n\geq N$.
\end{proof}

\subsection{Generalised metal mean Fibonacci}

Suppose now that $m=1$ and $l$ is arbitrary. This case is sometimes known as the \emph{metal mean} generalised Fibonacci. In  particular, $l=2$ is known as the \emph{copper mean} and $l=3$ as the \emph{nickel mean} \cite{gumbs1988dynamical, kolar1989attractors}. In this case, we are able to eliminate $t_n$ from the recursion relation \eqref{rec:general}, giving the simpler recursion relation
\begin{equation} \label{rec:metal}
    x_{n+1} = d_l(x_{n-1}) [x_n x_{n-1} - d_{l+1}(x_{n-2}) + d_{l-1}(x_{n-2})] - x_n d_{l-1}(x_{n-1}). 
\end{equation}
for $n\geq 2$. Notice how this reduces to the golden mean recursion relation \eqref{rec:golden} in the case that $l=1$.

\begin{thm} \label{thm:metal}
    Let $\omega\in\mathbb{R}$ and consider $x_n(\omega)$ satisfying the generalised metal mean recursion relation \eqref{rec:metal} for some $l\geq1$. Suppose that there exists some $N\in\mathbb{N}$ such that
    \begin{equation*}
        |x_{N}|>2, \quad
        |x_{N+1}|\geq \frac{5}{2}\quad\text{and}\quad
        |x_{N+2}|\geq\max\{|x_{N+1}|,|d_{l+1}(x_{N})|\}.
    \end{equation*}
    Consequently, $|x_n|>2$ for all $n\geq N$, meaning that $\omega$ is in the super band gap $\S_N$.
\end{thm}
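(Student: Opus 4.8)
The plan is to reduce \eqref{rec:metal} to a two-term recursion in $x_n$ alone whose coefficients are Chebyshev polynomials, and then run an induction on a strengthened invariant built from the $d_k$. First I would collect the terms of \eqref{rec:metal} carrying a factor $x_n$: the coefficient of $x_n$ is $x_{n-1}d_l(x_{n-1})-d_{l-1}(x_{n-1})$, which is exactly $d_{l+1}(x_{n-1})$ by the defining recursion \eqref{defn:dk}. Writing $c_l(x):=d_{l+1}(x)-d_{l-1}(x)$, this rewrites \eqref{rec:metal} as
\[
x_{n+1}=x_n\,d_{l+1}(x_{n-1})-d_l(x_{n-1})\,c_l(x_{n-2}),
\]
which collapses to the golden mean relation \eqref{rec:golden} when $l=1$ (then $d_{l+1}(x)=x$, $d_l(x)=1$, $c_l(x)=x$). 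From Lemma~\ref{lem:parity}, Lemma~\ref{lem:dkpositive} and Corollary~\ref{lem:dkgrowth} one has $0\le c_l(x)\le d_{l+1}(x)$ for $x\ge2$, hence $|c_l(x)|\le|d_{l+1}(x)|$ for all $|x|\ge2$; this is the bound that will dominate the second term.

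Next I would prove by induction that $|x_n|\ge\tfrac52$ for every $n\ge N+1$ and that $|x_n|\ge|d_{l+1}(x_{n-2})|$ for every $n\ge N+2$. Both statements hold at $n=N+1$ and $n=N+2$ directly from the hypotheses (using $|x_{N+2}|\ge|x_{N+1}|$ for the threshold at $N+2$). For the inductive step, the displayed recursion and the triangle inequality give $|x_{n+1}|\ge|x_n|\,|d_{l+1}(x_{n-1})|-|d_l(x_{n-1})|\,|c_l(x_{n-2})|$; substituting $|c_l(x_{n-2})|\le|d_{l+1}(x_{n-2})|\le|x_n|$ (the invariant at $n$) yields $|x_{n+1}|\ge|x_n|\left(|d_{l+1}(x_{n-1})|-|d_l(x_{n-1})|\right)$.

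Then I would close the step with a gap estimate for consecutive Chebyshev polynomials away from the origin: for $|x|\ge\tfrac52$ the ratio $d_{l+1}(x)/d_l(x)$ exceeds a fixed constant strictly bigger than $1$. Indeed, combining \eqref{defn:dk} with Lemma~\ref{lem:newgrowth} (which gives $|d_{l-1}(x)|\le 2|d_l(x)|/|x|$) one obtains $|d_{l+1}(x)|\ge(|x|-2/|x|)|d_l(x)|\ge\tfrac{17}{10}|d_l(x)|$ for $l\ge2$, while $d_2(x)=x$ handles $l=1$; the explicit formula \eqref{eq:dkformula} even gives the cleaner $|d_{l+1}(x)|\ge 2|d_l(x)|$. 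Hence $|d_{l+1}(x_{n-1})|-|d_l(x_{n-1})|\ge c\,|d_{l+1}(x_{n-1})|$ for a constant $c$ with $\tfrac52 c>1$, so (using $|x_n|\ge\tfrac52$) $|x_{n+1}|\ge\tfrac52 c\,|d_{l+1}(x_{n-1})|\ge|d_{l+1}(x_{n-1})|$, which is the invariant at $n+1$; and since $|d_{l+1}(x)|\ge|x|$ for $|x|\ge2$ (Corollary~\ref{lem:dkgrowth} and Lemma~\ref{lem:parity}) this also gives $|x_{n+1}|\ge|x_{n-1}|\ge\tfrac52$. Once the induction is complete, $|x_n|\ge\tfrac52>2$ for $n\ge N+1$ and $|x_N|>2$ by hypothesis, so $|x_n|>2$ for all $n\ge N$ and $\omega\in\S_N$.

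The hard part will be identifying the right invariant. The naive choice $|x_{n+1}|\ge|x_n|$ does not propagate, because the subtracted term $d_l(x_{n-1})c_l(x_{n-2})$ grows like $|x_{n-2}|^{l}$ and must be controlled; one is forced to carry the stronger bound $|x_n|\ge|d_{l+1}(x_{n-2})|$ — exactly the quantity in the $\max$ in the hypothesis — so that $|c_l(x_{n-2})|$ is dominated by $|x_n|$. Squeezing the constants so that $\tfrac52 c>1$ is what pins down the threshold $\tfrac52$, explaining why the hypothesis requires $|x_{N+1}|\ge\tfrac52$ rather than merely $|x_{N+1}|>2$; for $l=1$, where $d_{l+1}(x)=x$, the whole scheme degenerates into a mild variant of Theorem~\ref{thm:gold}.
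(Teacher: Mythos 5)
Your argument is correct and follows essentially the same route as the paper's proof: the same inductive invariant $|x_{n+2}|\ge\max\{\,\cdot\,,|d_{l+1}(x_n)|\}$, the same key bound $|d_{l+1}(x_{N})-d_{l-1}(x_{N})|\le|d_{l+1}(x_{N})|\le|x_{N+2}|$, and the same use of Lemma~\ref{lem:newgrowth} together with the threshold $\tfrac52$ to close the induction. The only difference is cosmetic: you pre-collect the coefficient of $x_n$ into $d_{l+1}(x_{n-1})$ and factor out $|x_n|$ (quantifying the gap between consecutive $d_k$), whereas the paper keeps the three-term form and factors out $|x_{N+2}d_l(x_{N+1})|$; both yield the same conclusion.
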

\begin{proof}
    The special case $l=1$ was proved in Theorem~\ref{thm:gold}. For $l\geq2$, we have from \eqref{rec:metal} that
    \begin{equation} \label{eq:metalineq}
        |x_{N+3}|\geq |x_{N+1}x_{N+2}d_l(x_{N+1})| - |d_l(x_{N+1})[d_{l+1}(x_N) - d_{l-1}(x_N)]| - |x_{N+2}d_{l-1}(x_{N+1})|.
    \end{equation}
    
    We know that $|d_{l+1}(x_N)|\geq|d_{l-1}(x_N)|$ and they must both have the same sign since they have the same parity and do not vanish on $|x_N|>2$. As a result, we have that 
    \begin{equation}
        |d_{l+1}(x_N) - d_{l-1}(x_N)| = |d_{l+1}(x_N)| - |d_{l-1}(x_N)| \leq |d_{l+1}(x_N)| \leq |x_{N+2}|,
    \end{equation}
    where the final inequality follows by hypothesis. Substituting this into \eqref{eq:metalineq} gives 
    \begin{align}
        |x_{N+3}|&\geq |x_{N+1}x_{N+2}d_l(x_{N+1})| - |x_{N+2} d_l(x_{N+1})| - |x_{N+2}d_{l-1}(x_{N+1})| \nonumber \\
        &\geq \left( |x_{N+1}| - 2 \right) | |x_{N+2} d_l(x_{N+1})|. \label{eq:metalineq2}
    \end{align}
    Since $|x_{N+1}|<|x_{N+2}|$, we can use Lemma~\ref{lem:newgrowth} to see that $|x_{N+2} d_l(x_{N+1})|\geq |x_{N+1} d_l(x_{N+1})|\geq |d_{l+1}(x_{N+1})|$. Since $|x_{N+1}|-2>0$, we conclude that 
    \begin{equation}
        |x_{N+3}|\geq |d_{l+1}(x_{N+1})|.
    \end{equation}
    We also need to check that $|x_{N+3}|\geq |x_{N+2}|$. This follows from \eqref{eq:metalineq2} since $|d_l(x_{N+1})|\geq d_l(2)=l\geq 2$ and $|x_{N+1}| - 2\geq \frac{1}{2}$. 
    
    Finally, we can proceed by induction to see that $|x_{n+2}|\geq\max\{|x_{n+1}|,|d_{l+1}(x_{n})|\}$ for all $n\geq N$. Since $|x_{n+1}|\geq \frac{5}{2}$ for all $n\geq N$, it follows that $|x_n|>2$ for all, so it must it hold that $\omega\in \S_N$.
\end{proof}

\subsection{Discussion}

We have established a new theory for super band gaps, which characterises when the sequence of traces $x_n(\omega)$ is guaranteed to grow indefinitely. A natural question to ask of the results proved in this section is whether the growth conditions are optimal. In the case of Theorems~\ref{thm:gold} and~\ref{thm:silver}, the results for the golden and silver mean tilings respectively, the simple growth condition is likely to be the strongest possible result. However, this is less clear for the other generalised Fibonacci tilings. In particular, we suspect that Theorem~\ref{thm:metal}, the result for generalised metal mean Fibonacci tilings, could be improved. The requirement that $|x_{N+1}|\geq 5/2$, for example, is almost certainly not optimal. We used this assumption to derive one of the bounds needed for the inductive hypothesis, however it is likely that this assumption could be relaxed by future work. Nevertheless, the numerical evidence we will present in Section~\ref{sec:examples} demonstrates that even this sub-optimal result still gives a precise prediction of the super band gaps (we will present numerical results for the copper mean tiling for each physical system). The reason for this is that within these super band gaps (particularly away from the edges) the sequence of traces $x_n(\omega)$ typically grows very quickly, so the sub-optimality of the growth condition has little effect. This very rapid growth in the middle of super band gaps is also the reason that the estimator $H_2(\omega)$, defined in \eqref{eq:Hdefn} and introduced by \cite{morini2018waves}, performed relatively well at predicting their approximate locations.

\section{Super band gaps in specific one-dimensional systems} \label{sec:examples}

The general theory from the previous section can be applied to study the spectral properties of generalised Fibonacci tilings in various one-dimensional systems. We will consider three different examples: a discrete mass-spring system, a structured rod and a continuous beam with modulated distances between the supports.

\subsection{Compressional waves in discrete mass-spring systems}
As a first example, we consider a periodic discrete mass-spring system. The fundamental cells are designed according to the generalised Fibonacci substitution rule \eqref{eq:tiling}, where the two elements $A$ and $B$ correspond to different masses $m_A$ and $m_B$ and linear springs with stiffness $k_A$ and $k_B$, respectively (see Figure~\ref{fig:systems}$/(a)$). In order to study the dispersive properties of harmonic compressional waves in this system, we study the horizontal displacement of each mass $u_j(t)=u_je^{i\omega t}$ and the harmonic force acting on that mass $f_j(t)=f_je^{i\omega t}$, where the index $j$ indicates the relevant mass. Thus, we introduce the state vector in the frequency domain $\mathbf{u}_j= [u_j, f_j]^T$. The relationship between $\mathbf{u}_j$ and the state vector of the preceding element $\mathbf{u}_{j-1}$ is given by \cite{Lazaro2022Sturm}:
\begin{equation} \label{eq:massspring}
\mathbf{u}_j=\left[
\begin{array}{c}
u_j \\
f_j
\end{array}
\right]=
\left[
\begin{array}{cc}
1  & -\cfrac{1}{k_X} \\
m_X\omega^2 & 1-\cfrac{m_X\omega^2}{k_X} \\
\end{array}
\right]
\left[
\begin{array}{c}
u_{j-1} \\
f_{j-1}
\end{array}
\right]\equiv T^X(\omega, m_X, k_X)\mathbf{u}_{j-1}, \quad \mathrm{with} \quad X \in \left\{A, B\right\}.
\end{equation}
$T^X(\omega, m_X, k_X)$ is the transfer matrix of a single element $A$ or $B$, and corresponds to the product of the respective transfer matrices associated with the mass $m_X$ and the spring of stiffness $k_X$ \cite{RuiWang2019}. 

Given a generalised Fibonacci unit cell $\F_n$, the state vector $\mathbf{u}_{F_{n}}$ at the right-hand boundary of the unit cell (corresponding to $j=F_{n}$ where $F_{n}$ is the previously defined  generalised Fibonacci number) can then be expressed in terms of the state vector at the left-hand boundary, $\mathbf{u}_{0}$, according to
\begin{equation}
\mathbf{u}_{F_{n}}=T_n(\omega)\mathbf{u}_{0},
\label{uFn}
\end{equation}
where $T_n(\omega)=\Pi_{p=1}^{F_{n}}T^X(\omega, m_X, k_X)$ is the transfer matrix of the fundamental cell of order $n$. Applying the Floquet-Bloch theorem to the unit cell, we substitute $\mathbf{u}_{F_{n}}=\mathbf{u}_0e^{iKL_n}$ into equation (\ref{uFn}), and due to the fact that $T_n(\omega)$ is endowed with the unimodularity and recursive properties illustrated in Section \ref{gentheory}, the dispersion relation takes the form 
\begin{equation}
\cos({KL_n})=\frac{1}{2}\tr( T_n({\omega})) \quad \Rightarrow \quad KL_{n}=\arccos\left(\frac{\tr( T_{n}(\omega))}{2}\right),
\label{disp}
\end{equation}
where $L_n$ is the length of the unit cell.

\begin{figure}
    \centering
    \begin{subfigure}{0.475\textwidth}
        \includegraphics[width=\linewidth,trim=1cm 0 1.5cm 0,clip]{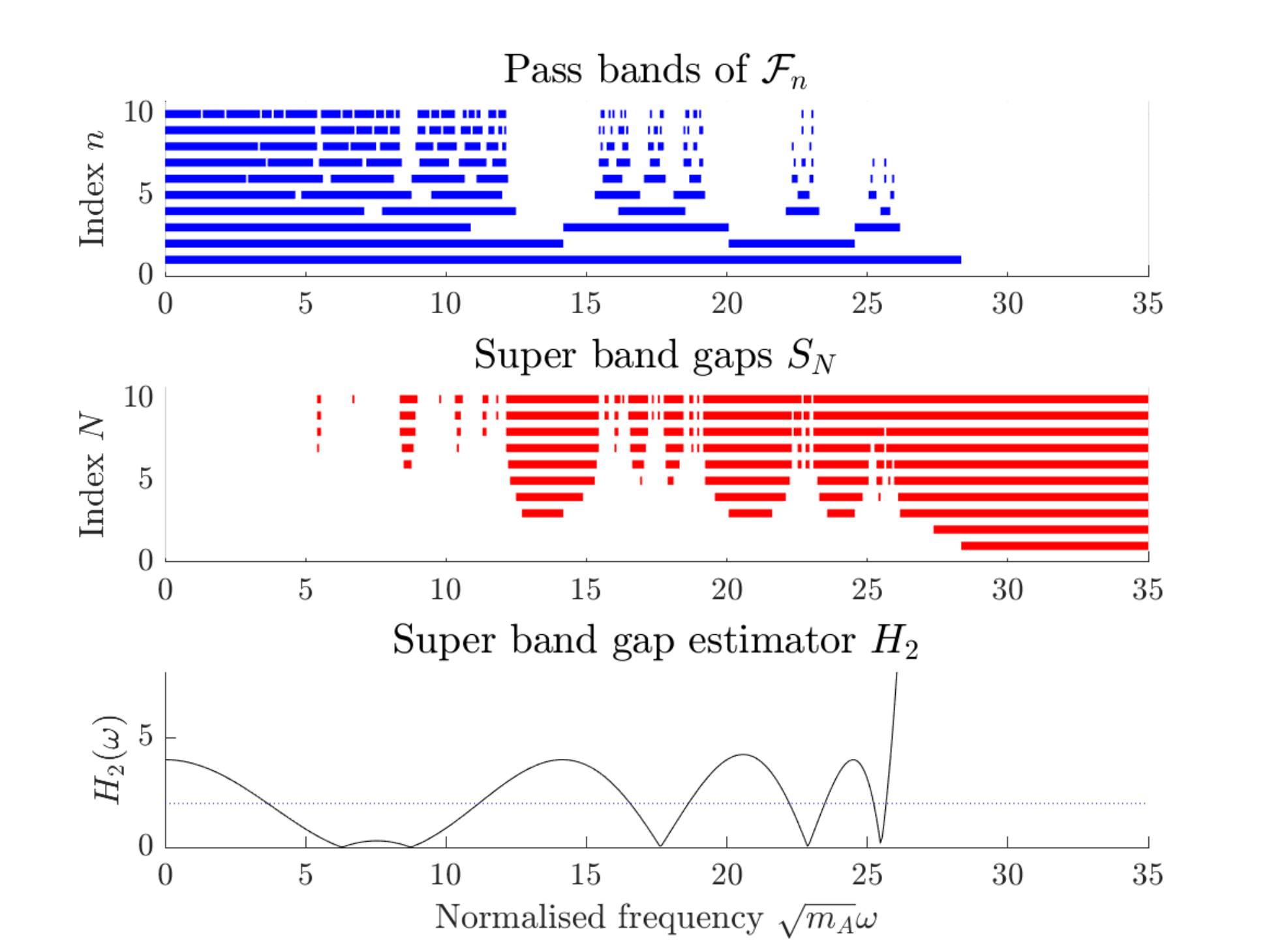}
        \caption{Golden mean Fibonacci ($m=1$, $l=1$).}
    \end{subfigure}
    \hfill
    \begin{subfigure}{0.475\textwidth}
        \includegraphics[width=\linewidth,trim=1cm 0 1.5cm 0,clip]{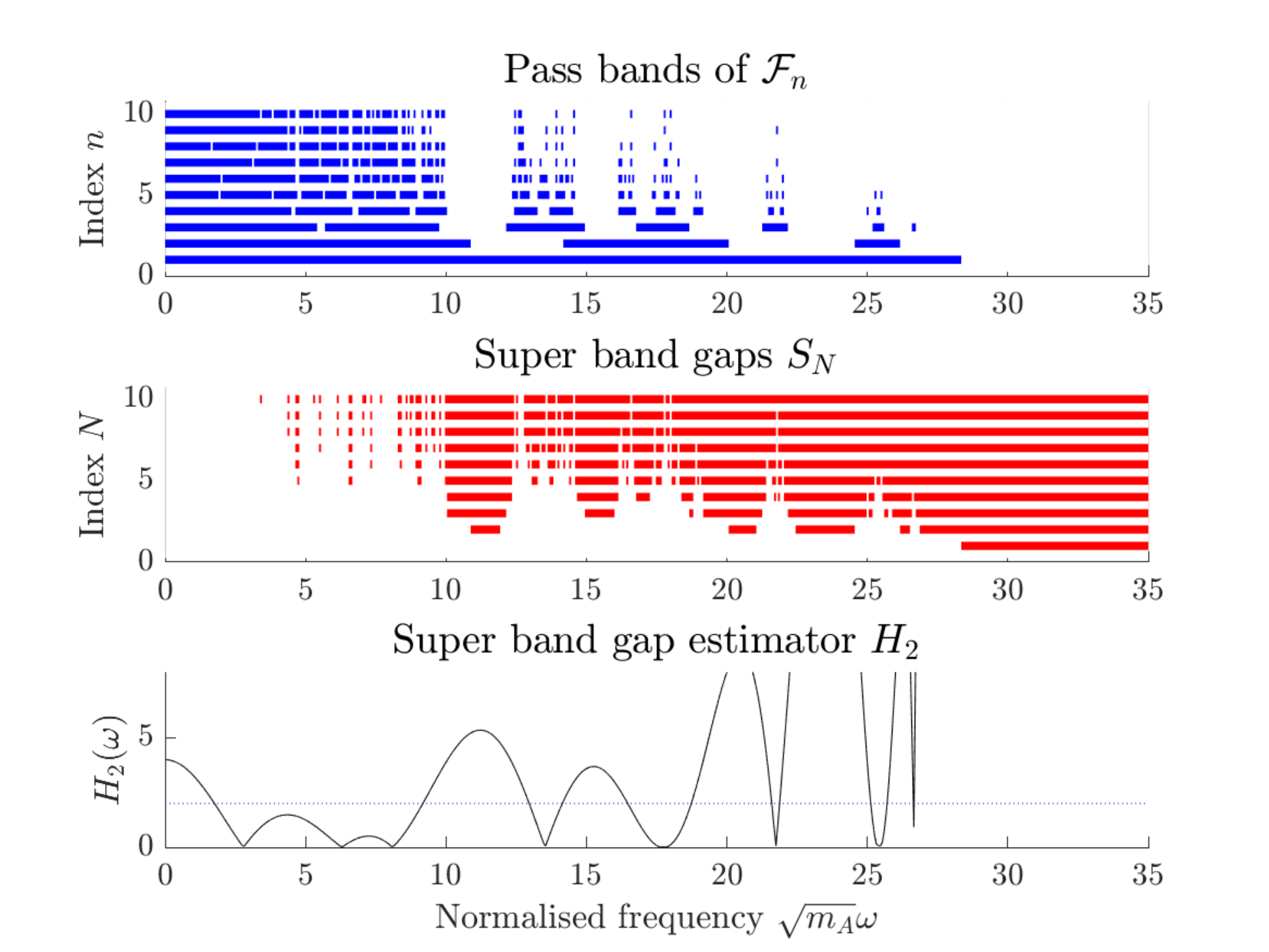}
        \caption{Silver mean Fibonacci ($m=2$, $l=1$).}
    \end{subfigure}

    \vspace{0.2cm}

    \begin{subfigure}{0.475\textwidth}
        \includegraphics[width=\linewidth,trim=1cm 0 1.5cm 0,clip]{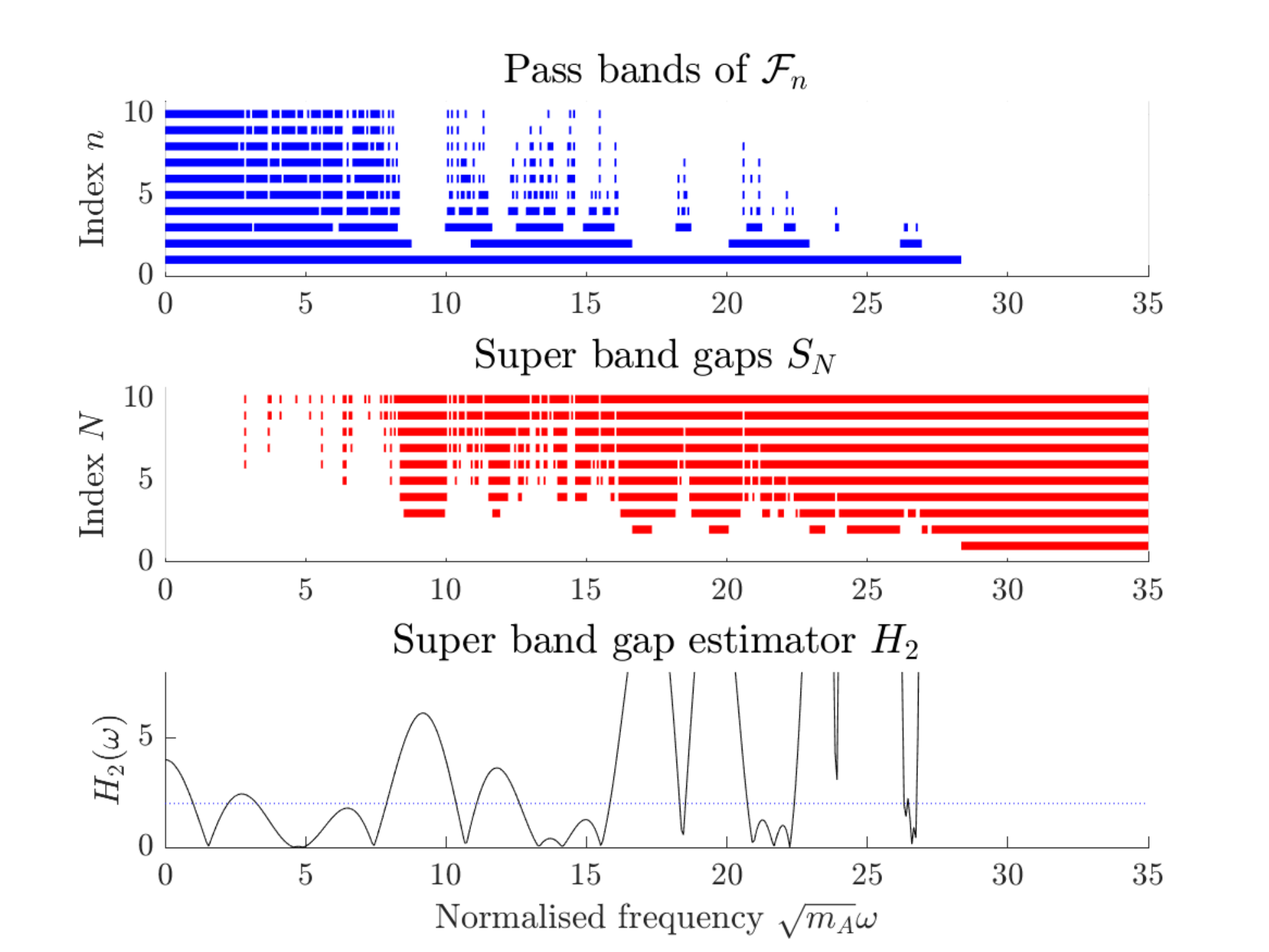}
        \caption{Bronze mean Fibonacci ($m=3$, $l=1$).}
    \end{subfigure}
    \hfill
    \begin{subfigure}{0.475\textwidth}
        \includegraphics[width=\linewidth,trim=1cm 0 1.5cm 0,clip]{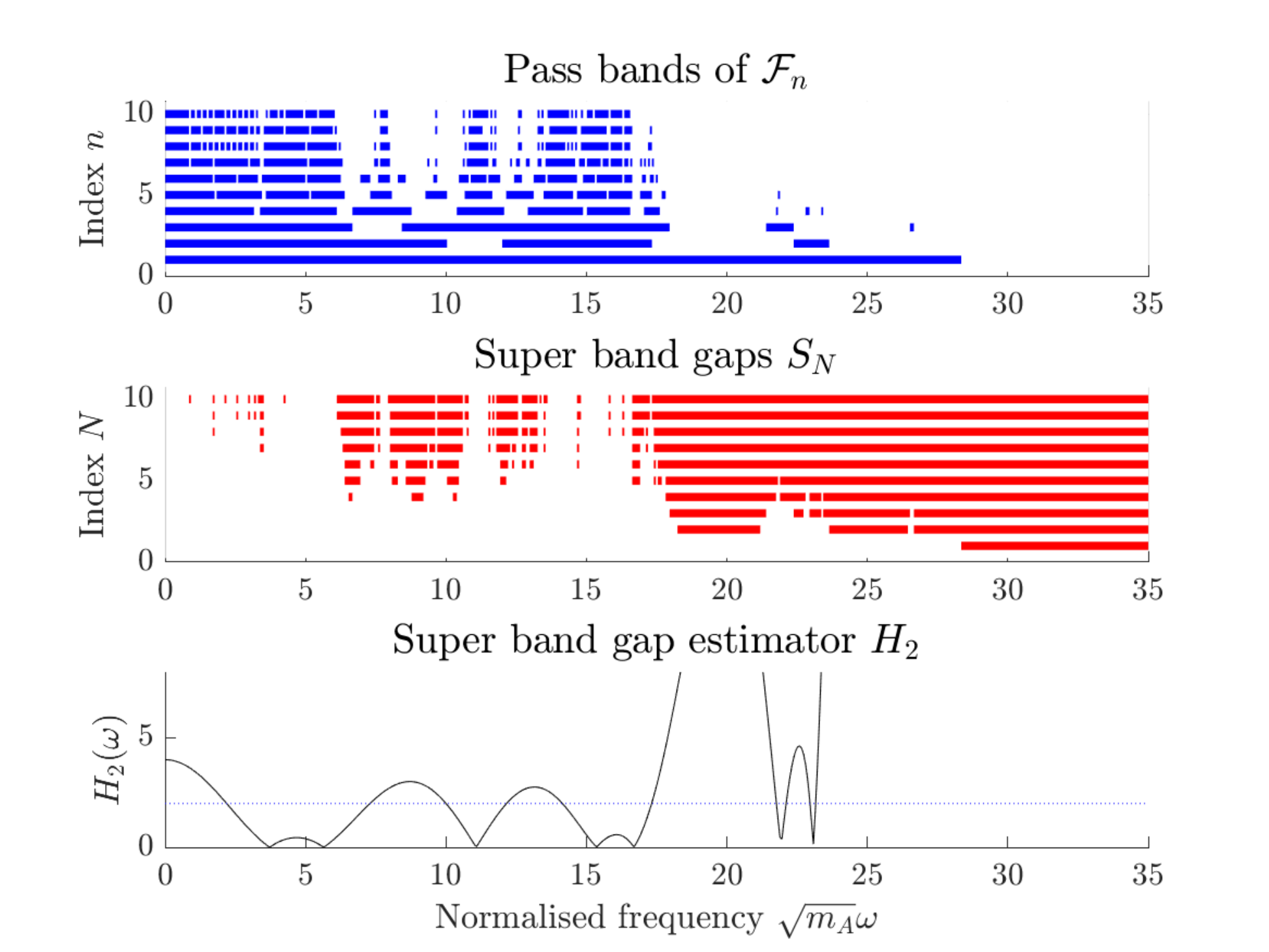}
        \caption{Copper mean Fibonacci ($m=1$, $l=2$).}
    \end{subfigure}
    
    \caption{The pass bands and super band gaps of a discrete mass-spring system with spring constants varied according to generalized Fibonacci tilings $\F_n$. For each tiling, the top plot shows the pass bands for each successive Fibonacci tiling $\F_n$, the middle shows the super band gaps $\S_n$, as predicted by the corresponding theorem, and the bottom shows the super band gap estimator $H_2$, as used in previous works and defined in \eqref{eq:Hdefn}. We use the parameter values $k_A=2k_B=200$N/m and suppose that $m_A=m_B$. The normalised frequency $\sqrt{m_A}\omega$ is shown on the horizontal axes.
    }
    \label{fig:SBG_massspring}
\end{figure}

The pattern of pass and stop bands for this discrete mass-spring system is shown in Figure~\ref{fig:SBG_massspring} for various generalised Fibonacci tiling. In each case, the upper plot shows the pass bands of successive tilings $\F_n$, characterised as $\omega$ such that $|\tr(T_n(\omega))|\leq2$. We can see how the spectrum becomes increasingly complex for increasing $n$. The middle plots of Figure~\ref{fig:SBG_massspring} show the super band gaps $\S_N$, which are computed by checking if $\tr(T_N(\omega))$ satisfies the growth condition from the theorems in Section~\ref{gapstheory}. We see that the super band gaps agree with the pattern of spectral gaps observed in the top plot. By looking at the super band gaps $\S_N$ for larger $N$, our theory is able to reveal some of the complex structure that emerges for $\F_n$ with large $n$ and shows that many of the smaller band gaps that are created are, in fact, super band gaps. 

The lower plots in Figure~\ref{fig:SBG_massspring} show the super band gap estimator function $H_2(\omega)=|\tr(T_2(\omega))\tr(T_3(\omega))|$ from \cite{morini2018waves}. We can see that the local maxima of $H_2$ successfully predict the locations of the super band gaps, but that it is unable to reveal the complex spectral behaviour that occurs for higher-order Fibonacci tilings. This shows another benefit of our new theory over the existing methods. We have not only developed a rigorous theory for super band gaps, but our theory has greater resolution than was previously possible.

One notable feature of Figure~\ref{fig:SBG_massspring} is the occurrence of high-frequency super band gaps. That is, there appears to exist some $\omega^*$ such that any $\omega>\omega^*$ is in a super band gap. The origin for this phenomenon can be seen by inspecting the transfer matrices $T^A$ and $T^B$, defined in \eqref{eq:massspring}. We have that 
\begin{equation}
    \tr(T^X(\omega, m_X, k_X))= 2 - \cfrac{m_X\omega^2}{k_X},
\end{equation}
so it is easy to see that if $\omega>2\sqrt{k_X/m_X}$ then $\tr(T^X)<-2$ so $\omega$ is in a band gap of the material with label $X$. As a result, we have that if $\omega>\max\left\{ 2\sqrt{k_A/m_A}, 2\sqrt{k_B/m_B} \right\}$ then $\omega$ is in band gaps of both $\F_0$ and $\F_1$, for any generalised Fibonacci tiling. However, this is not generally enough to guarantee that $\omega$ is in a super band gap. For the discrete mass-spring system, the super band gap occurs due to the structure of the associated transfer matrices, which take a specific form when $\omega$ is sufficiently large. This is made precise with the following result.

\begin{thm} \label{thm:highfreqgaps}
    Consider a discrete mass-spring system with behaviour governed by the equation \eqref{eq:massspring} and fundamental cells designed according to a generalised Fibonacci substitution rule \eqref{eq:tiling} with arbitrary $m,l\geq1$. There exists some $\omega^*$ such that if $\omega>\omega^*$ then $\omega$ is in the super band gap $\S_0$.
\end{thm}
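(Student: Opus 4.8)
The plan is to prove this by a hyperbolicity (invariant-cone) argument rather than by iterating the trace recursion. The idea is that for $\omega$ large the two single-element transfer matrices $T^A(\omega)$ and $T^B(\omega)$ share a common cone $\mathcal{C}\subset\mathbb{R}^2$ which each of them maps strictly into its own interior and on which each uniformly expands the Euclidean norm; since $\F_n$ is just some word in $A$ and $B$, the matrix $T_n$ is a product of copies of $T^A$ and $T^B$ and therefore inherits both properties, whatever $m$ and $l$ are. A unimodular $2\times2$ matrix that maps a closed cone strictly inside itself has a real eigenvector in that cone, and the uniform expansion forces the corresponding eigenvalue $\lambda$ to have $|\lambda|>1$, whence $|x_n|=|\tr T_n|=|\lambda|+|\lambda|^{-1}>2$. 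Since this would hold for every $n\ge0$, it gives $\omega\in\S_0$.

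To set up the cone I would fix $\rho\in\bigl(0,1/\max\{k_A,k_B\}\bigr)$, say $\rho=1/(2\max\{k_A,k_B\})$, and take
\[
  \mathcal{C}=\bigl\{(u,f)^T\in\mathbb{R}^2 : |u|\le\rho\,|f|\bigr\},
\]
a thin sector about the $f$-axis. Writing $r=u/f$ for a vector with $f\ne0$, one computes from \eqref{eq:massspring} that applying $T^X$ sends $r$ to $r'=(r-1/k_X)/\bigl[m_X\omega^2(r-1/k_X)+1\bigr]$, and that the second component of the image equals $f\,[m_X\omega^2(r-1/k_X)+1]$. For $(u,f)\in\mathcal{C}$ one has $|r-1/k_X|\ge 1/k_X-\rho\ge 1/(2k_X)$, so this bracket has modulus at least $m_X\omega^2/(2k_X)-1$; hence $|r'|=O(1/\omega^2)<\rho$ for $\omega$ large, and the image norm is at least $\bigl(m_X\omega^2/(2k_X)-1\bigr)|f|\ge c(\omega)\,\|(u,f)^T\|$ with $c(\omega)=\bigl(m_X\omega^2/(2k_X)-1\bigr)/\sqrt{1+\rho^2}$. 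So I would pick $\omega^*$ large enough (depending only on $m_A,m_B,k_A,k_B$) that, for all $\omega>\omega^*$ and both $X\in\{A,B\}$, we have $T^X\mathcal{C}\subset\operatorname{int}\mathcal{C}$ and $c(\omega)\ge2$.

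Then for $\omega>\omega^*$ and any $n\ge0$, $T_n$ being a product of $F_n\ge1$ factors from $\{T^A,T^B\}$ gives $T_n\mathcal{C}\subset\operatorname{int}\mathcal{C}$ and $\|T_nv\|\ge2^{F_n}\|v\|$ for every $v\in\mathcal{C}$. The projective map induced by $T_n$ carries the closed arc corresponding to $\mathcal{C}$ into its interior, so by the intermediate value theorem on a proper arc it has a fixed point there; that is, $T_n$ has a real eigenvector $v_*\in\mathcal{C}$ with eigenvalue $\lambda$ satisfying $|\lambda|=\|T_nv_*\|/\|v_*\|\ge2^{F_n}\ge2$. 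Since $\det T_n=1$, the eigenvalues are $\lambda,\lambda^{-1}$, so $|x_n|=|\lambda+\lambda^{-1}|=|\lambda|+|\lambda|^{-1}>2$. As this holds for every $n\ge0$, we get $\omega\in\S_0$, which is what the theorem asks.

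The main obstacle I anticipate is the cone step: the sector must be taken thin enough (hence the constraint $\rho<1/\max\{k_A,k_B\}$) to stay away from the direction $r=1/k_X$ along which $T^X$ does \emph{not} contract, while simultaneously keeping the denominator $m_X\omega^2(r-1/k_X)+1$ bounded away from zero on $\mathcal{C}$; it is these two requirements, rather than any $m,l$-dependent feature, that pin down how large $\omega^*$ has to be. The alternative of iterating the trace recursion \eqref{rec:general} looks considerably messier, since one would have to control the auxiliary sequence $t_n$ jointly with $x_n$ and track how both grow; the cone argument bypasses the recursion altogether and treats all precious-, metal- and mixed-mean tilings on the same footing, using only the fact that $\F_n$ is a word in $A$ and $B$.
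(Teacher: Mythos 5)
Your proof is correct, but it takes a genuinely different route from the paper. The paper argues asymptotically: it expands $T^X=m_X\omega^2\bigl(N_X+O(\omega^{-2})\bigr)$ with $N_X=\bigl(\begin{smallmatrix}0&0\\1&-k_X^{-1}\end{smallmatrix}\bigr)$, observes that products of the leading matrices retain a vanishing first row with $(2,2)$-entry $\pm\prod k_{X_i}^{-1}$, and reads off $|\tr(T_n)|\geq\bigl(\min\{m_A,m_B\}\,\omega^2/\max\{k_A,k_B\}\bigr)^{F_n}$, which exceeds $2$ for all $n$ once $\omega$ is large. You instead exhibit a thin cone about the force axis that both $T^A$ and $T^B$ map strictly into its interior while expanding norms by a factor $\geq2$, deduce the same for any word in $A,B$, and extract a real eigenvector in the cone with eigenvalue $|\lambda|\geq2^{F_n}$, whence $|x_n|=|\lambda|+|\lambda|^{-1}>2$. (Your computations check out: for $|u/f|\leq\rho<1/k_X$ the quantity $r-1/k_X$ is bounded away from zero, so the new slope is $O(\omega^{-2})$ and the new force component carries the factor $m_X\omega^2(r-1/k_X)+1$, whose modulus is at least $m_X\omega^2/(2k_X)-1$.) Your argument buys two things the paper's does not make explicit: the threshold $\omega^*$ depends only on the single-element matrices, so there is no question of whether the $O(\omega^{-2})$ error terms in the paper's expansion of $T_n$ are uniform in $n$ (the paper is somewhat informal on exactly this point); and the conclusion is really a hyperbolicity statement valid for \emph{any} word in $A$ and $B$, not only Fibonacci words, which in particular covers the $\F_0,\F_1$ base cases and all mixed tilings at once. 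What the paper's computation buys in exchange is an explicit growth rate for the trace itself, $|x_n|\gtrsim(c\,\omega^2)^{F_n}$, rather than only the qualitative bound $|x_n|>2$. The only cosmetic caveat in your write-up is the phrase $T^X\mathcal{C}\subset\operatorname{int}\mathcal{C}$, which should be read projectively (excluding the origin), and the continuity of the induced map on the arc $[-\rho,\rho]$ should be noted to follow from the non-vanishing of the $f$-component along the whole orbit inside the cone — both of which you have effectively already established.
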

\begin{proof}
Suppose that $\omega\to\infty$ while all the other parameters are kept constant. In this case, we have that 
\begin{equation}
    T^X = m_X\omega^2\left(\left[
\begin{array}{cc}
0  & 0 \\
1 & -k_X^{-1} 
\end{array}
\right]
+O(\omega^{-2})\right)
\quad\text{as}\quad \omega\to\infty.
\end{equation}
Then, some straightforward algebra reveals that the transfer matrix of the generalised Fibonacci tiling $\F_n$ satisfies
\begin{equation}
    T_n = (m_A)^{mF_{n-2}}(m_B)^{lF_{n-1}}\omega^{2F_n}\left(\left[
\begin{array}{cc}
0  & 0 \\
\eta_1 & \eta_2 
\end{array}
\right]
+O(\omega^{-2})\right)
\quad\text{as}\quad \omega\to\infty,
\end{equation}
where $\eta_1$ and $\eta_2$ are non-zero constants and the generalised Fibonacci numbers $F_n$ were defined in \eqref{eq:Fibonaccinumbers}. Crucially, it holds that $|\eta_2|\geq \max\{k_A,k_B\}^{-F_n}$, so we can see that
\begin{equation}
    |\tr(T_n)|\geq \frac{(m_A)^{mF_{n-2}}(m_B)^{lF_{n-1}}}{\max\{k_A,k_B\}^{F_n}}\omega^{2F_n}
    \geq \left( \frac{\min\{m_A,m_B\}}{\max\{k_A,k_B\}}\omega^{2}\right)^{F_{n}}.
\end{equation}
As a result, we can see that if $\omega$ is sufficiently large, then $|\tr(T_n)|>2$ for all $n$, implying that $\omega$ is in the super band gap $S_0$.
\end{proof}

\subsection{Axial waves in structured rods} \label{sec:rods}
The dispersive properties of two-phase quasiperiodic structured rods with unit cells generated by one-dimensional generalised Fibonacci sequences have been studied previously in \cite{morini2018waves}, including experimentally by \cite{gei2020phononic}. The lengths of the two segments $A$ and $B$ are indicated with $l_{A}$ and $l_{B}$, respectively, while $A_X$, $E_X$ and $\rho_X$ denote the cross-sectional area, Young's modulus and mass density per unit volume of the two adopted materials, respectively. This is sketched in Figure~\ref{fig:systems}$/(b)$. For both elements, we define the displacement function and the axial force along the rod as $u(z)$ and $N(z)=EAu^{'}(z)$, respectively, where $z$ is the coordinate describing the longitudinal axis (as depicted in Figure~\ref{fig:systems}). The governing equation of harmonic axial waves in each section is given by 
\begin{equation}
u^{''}_X(z)+Q_X\omega^2 u_X(z)=0,
\label{axialwave}
\end{equation}
where $Q_X=\rho_X/E_X$ corresponds to the reciprocal of the square of the speed of propagation of
longitudinal waves in material $X$. The general solution of \eqref{axialwave} is given by
\begin{equation}
u_X(z)=C_1^X\sin\left(\sqrt{Q_X}\omega z\right)+C_2^X\cos\left(\sqrt{Q_X}\omega z\right),
\end{equation}
where $C_1^X$ and $C_2^X$ are integration constants, to be determined by the boundary conditions.

In order to obtain the dispersion diagram of the quasiperiodic rod, we express the state vector $\mathbf{u}_{F_n}=[u_{F_n}, N_{F_n}]^T$ at the end of the Fibonacci unit cell as a function of the same vector $\mathbf{u}_0=[u_0, N_0]^T$ on the left-hand side:
\begin{equation}
\mathbf{u}_{F_n}=T_n(\omega)\mathbf{u}_0,
\end{equation}
where $T_n(\omega)$ is a transfer matrix of the cell $\mathcal{F}_n$. This matrix is the result of the product  $T_n(\omega)=\Pi_{p=1}^{F_{n}}T^X(\omega)$, where $T^X(\omega)$ ($X\in \left\{A, B\right\}$) is the transfer matrix which relates quantities across a single element, given by
\begin{equation}
T^X(\omega)=
\left[
\begin{array}{cc}
\cos\left(\sqrt{Q_X}\omega l_X\right) & \cfrac{\sin\left(\sqrt{Q_X}\omega l_X\right)}{E_X A_X\sqrt{Q_X}\omega}\\
                                      &                                \\
-E_X A_X \omega \sqrt{Q_X}\sin\left(\sqrt{Q_X}\omega l_X\right) & \cos\left(\sqrt{Q_X}\omega l_X\right)\\
\end{array}
\right].
\label{Tmrod}
\end{equation}
Once again, the matrices $T_n(\omega)$ possess the important properties introduced in Section \ref{gentheory}. As a consequence, if we impose the Floquet-Bloch condition $\mathbf{u}_r=\mathbf{u}_l e^{iKL_n}$, then the corresponding dispersion relation assumes a form identical to (\ref{disp}).

\begin{figure}
    \centering
    \begin{subfigure}{0.475\textwidth}
        \includegraphics[width=\linewidth,trim=1cm 0 1.5cm 0,clip]{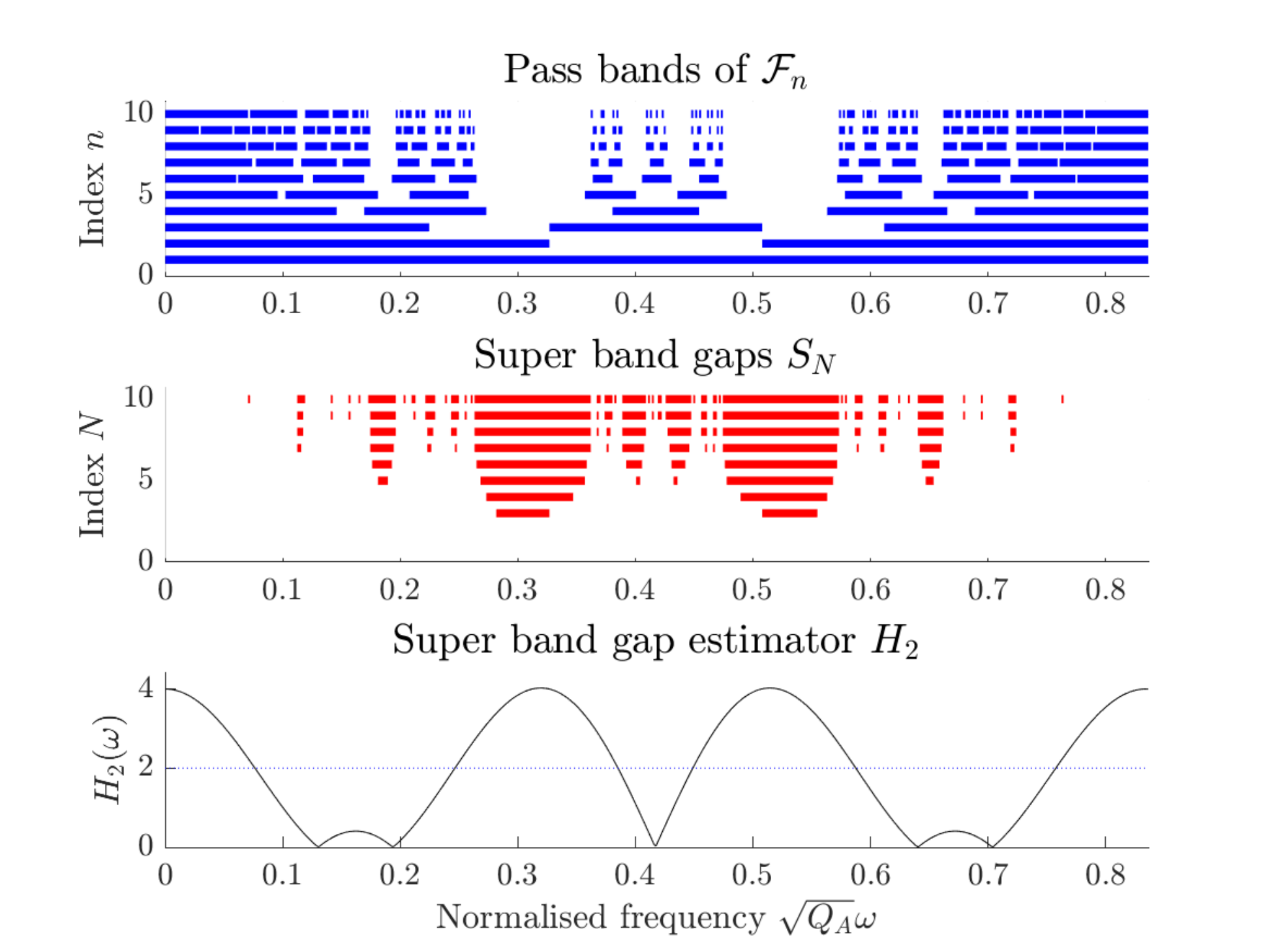}
        \caption{Golden mean Fibonacci ($m=1$, $l=1$).}
    \end{subfigure}
    \hfill
    \begin{subfigure}{0.475\textwidth}
        \includegraphics[width=\linewidth,trim=1cm 0 1.5cm 0,clip]{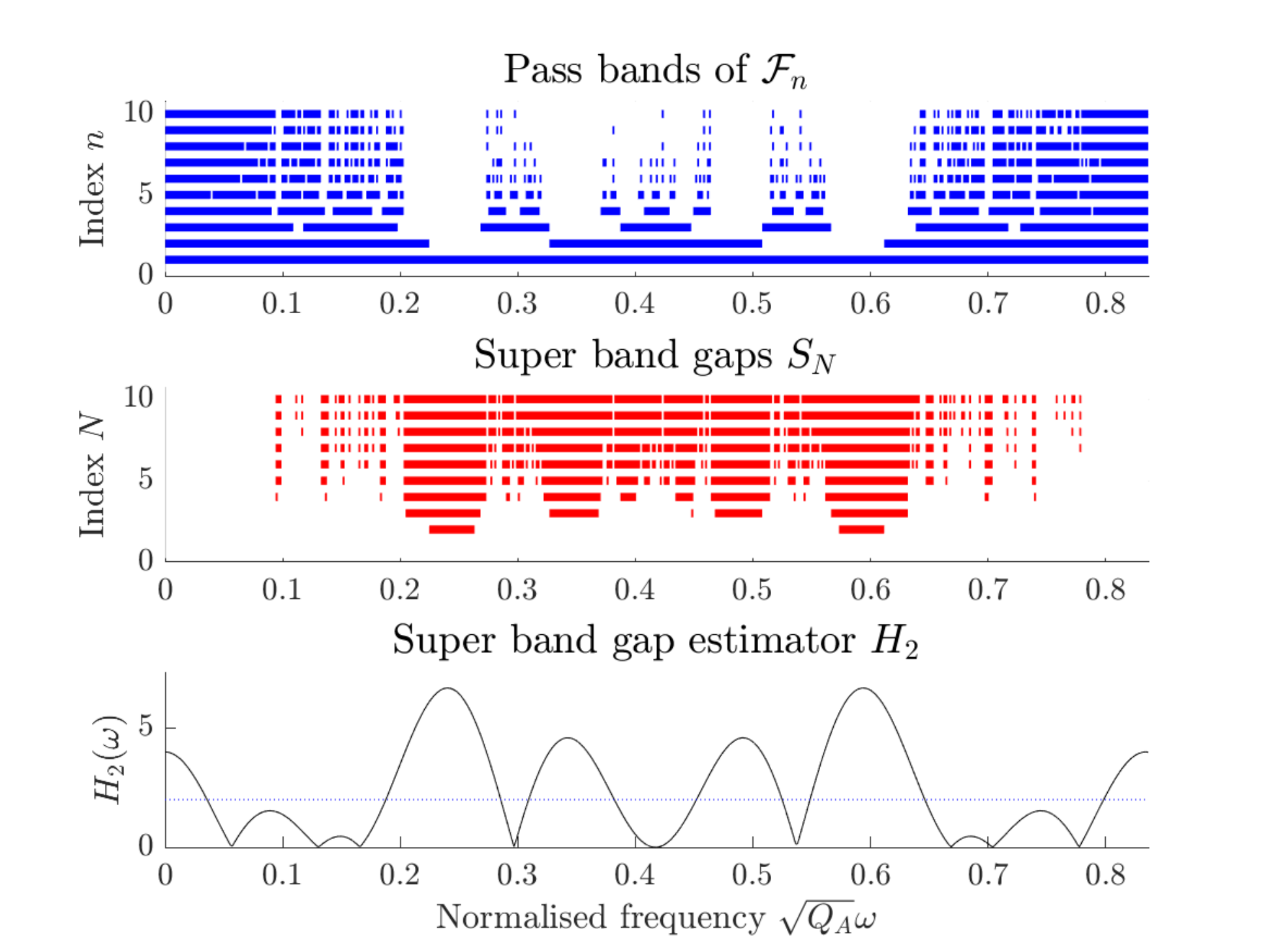}
        \caption{Silver mean Fibonacci ($m=2$, $l=1$).}
    \end{subfigure}

    \vspace{0.2cm}

    \begin{subfigure}{0.475\textwidth}
        \includegraphics[width=\linewidth,trim=1cm 0 1.5cm 0,clip]{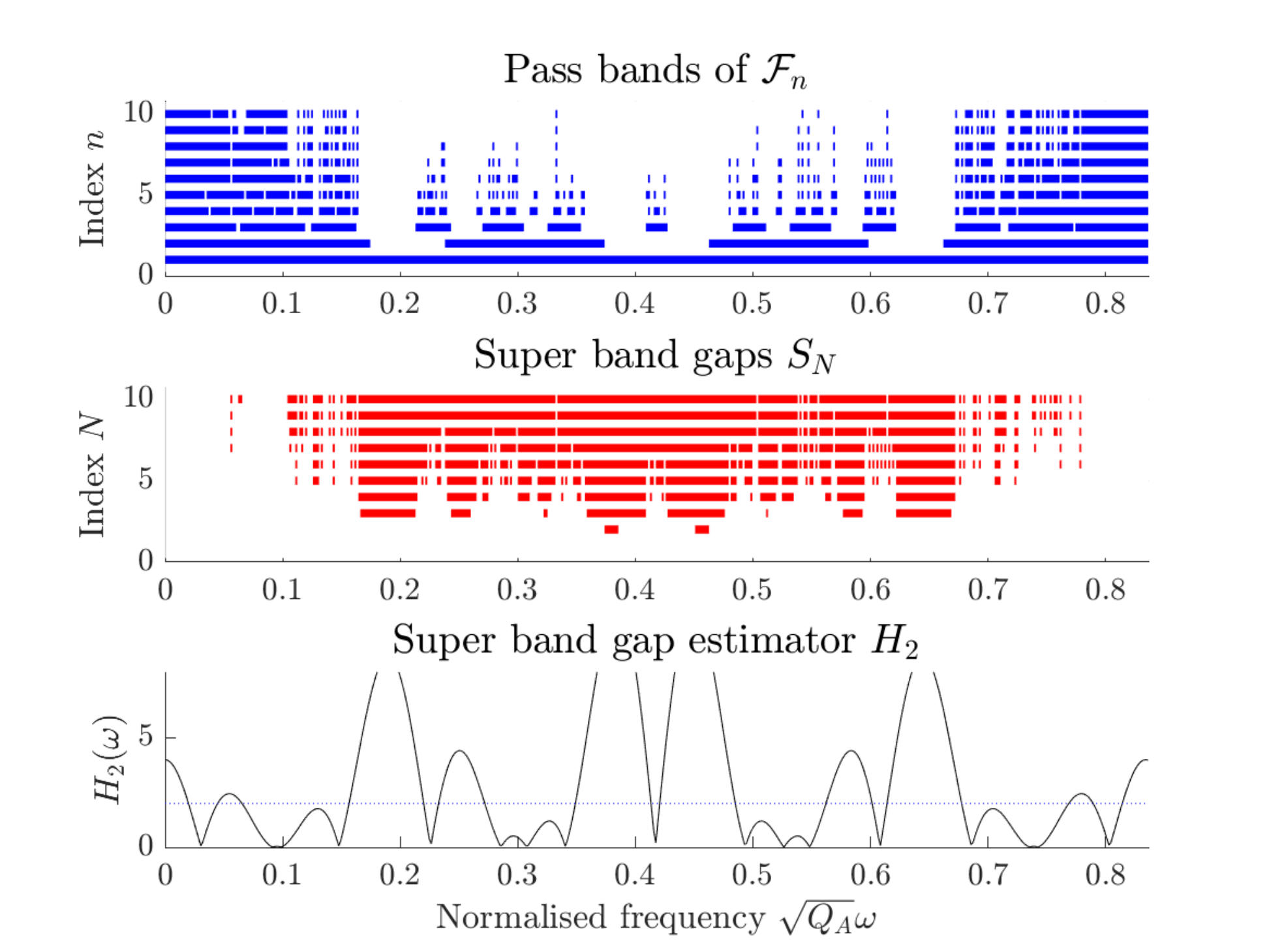}
        \caption{Bronze mean Fibonacci ($m=3$, $l=1$).}
    \end{subfigure}
    \hfill
    \begin{subfigure}{0.475\textwidth}
        \includegraphics[width=\linewidth,trim=1cm 0 1.5cm 0,clip]{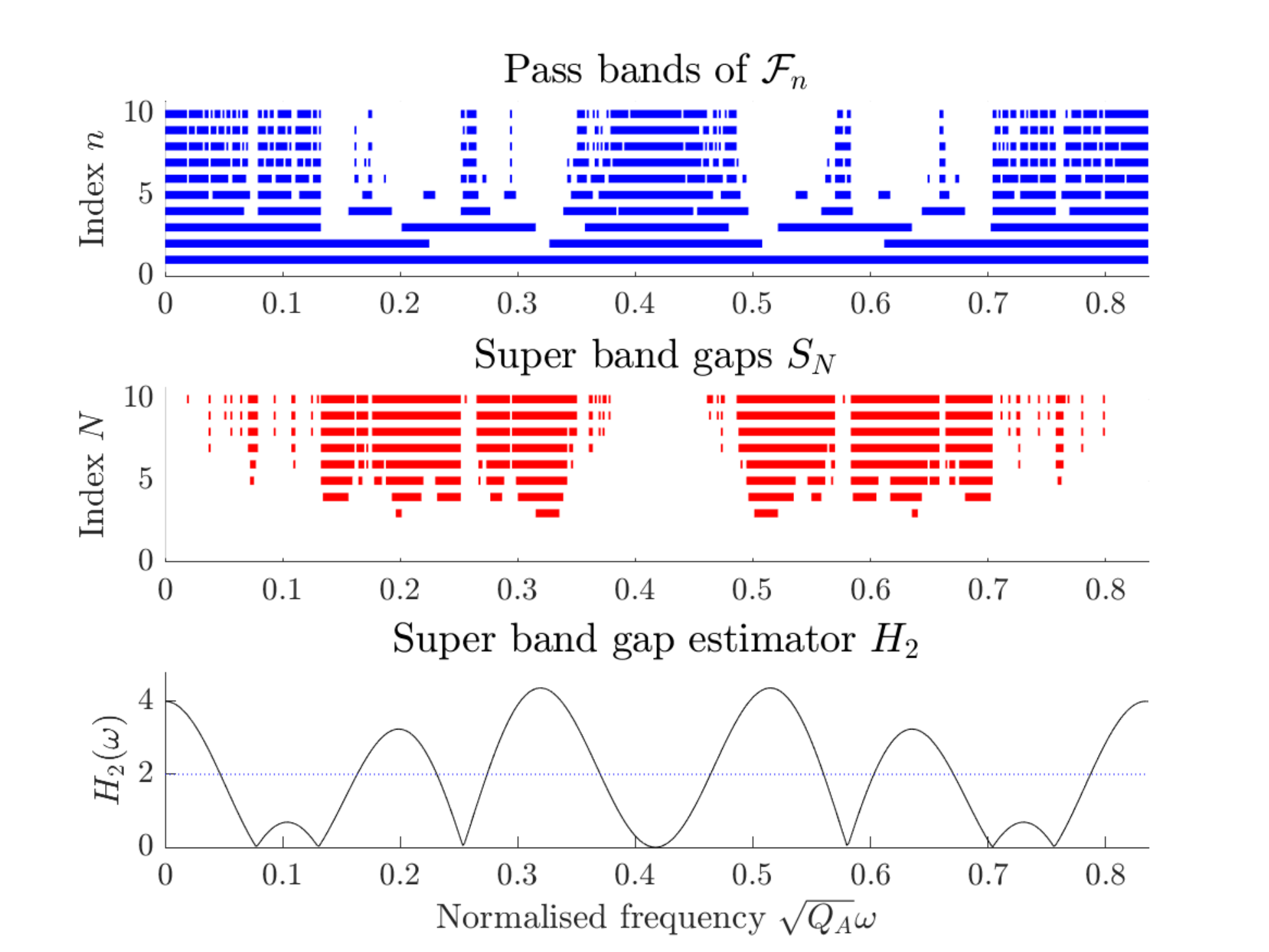}
        \caption{Copper mean Fibonacci ($m=1$, $l=2$).}
    \end{subfigure}
    
    \caption{The pass bands and super band gaps of a system of structured rods with thickness varied according to generalized Fibonacci tilings $\F_n$. For each tiling, the top plot shows the pass bands for each successive Fibonacci tiling $\F_n$, the middle shows the super band gaps $\S_n$, as predicted by the corresponding theorem, and the bottom shows the super band gap estimator $H_2$, as used in previous works and defined in \eqref{eq:Hdefn}. We use the parameter values $E_A=E_B=3.3$GPa, $\rho_A=\rho_B=1140$kg/m$^3$, $2A_A=A_B=1.963\times20^{-3}$m$^2$, $l_A=l_B=0.07$m. We plot the normalised frequency $\sqrt{Q_A}\omega$ on the horizontal axes, noting that $Q_A=Q_B$ in this case.}
    \label{fig:SBG_rod}
\end{figure}

The pattern of pass and stop bands for this continuous system of structured rods is shown in Figure~\ref{fig:SBG_rod} for several generalized Fibonacci tilings. As for the mass-spring system, we show the pattern of pass bands for successive tilings $\F_n$ in the top subplot. Beneath this, we show the frequencies that are guaranteed to lie within super band gaps, thanks to the theorems from Section~\ref{gapstheory}. We see good agreement between the super band gaps $\S_N$ and the gaps between the pass bands of $\F_n$. Once again, we see that as $N$ increases, the super band gaps $\S_N$ recover not only the main band gaps but also a more intricate pattern of super band gaps. 

One notable feature of the spectra in Figure~\ref{fig:SBG_rod} is that they are symmetric and periodic. This is a consequence of the specific setup we have chosen for these simulations, which has all the material parameters identical between $A$ and $B$ (\emph{i.e.} $E_A=E_B$, $\rho_A=\rho_B$ and $l_A=l_B$) and only the cross-sectional area modulated. As a result, the first three terms of the sequence of traces are given by
\begin{equation}
    x_0(\omega) = x_1(\omega) = 2\cos(\sqrt{Q_A}\omega l_A),\quad 
    x_2(\omega) = 2\cos^2(\sqrt{Q_A}\omega l_A) + \bigg( \frac{A_A}{A_B}+\frac{A_B}{A_A} \bigg)\sin^2(\sqrt{Q_A}\omega l_A).
\end{equation}
It is easy to see that these functions are all periodic functions of $\omega$. This spectral symmetry and periodicity was explored through the symmetries of a coordinate transformation in \cite{gei2020phononic}, where they referred to this setup as the ``canonical configuration''. 

\subsection{Flexural waves in continuous beams with modulated supports}
As a third prototype of one-dimensional Fibonacci-generated dynamical systems, we investigate the dispersive properties of flexural vibrations in a quasiperiodic multi-supported beam. In this case, we modulate the distances between the positions of the supports along the axis of the beam (see Figure~\ref{fig:systems}$/(c)$), choosing the lengths according to generalised Fibonacci tilings. The beam is homogeneous, with bending stiffness denoted by $EI$, and the equation governing harmonic vibrations of the transverse displacement $v(z)$ is
\begin{equation}
EIv^{''''}-\rho\omega^2v=0.
\label{beameq}
\end{equation}

The solution of (\ref{beameq}) can be expressed as $v(z)=C\exp{ikz}$, yielding the characteristic equation
\begin{equation} 
(kr)^4-P\omega^2=0,
\label{eqchar}
\end{equation}
where $r$ is the radius of inertia of the cross section and $P=\rho r^4/EI$. Equation
(\ref{eqchar}) admits four solutions, namely
\begin{equation} \label{eq:Kdefn}
k_{1, 2}(\omega) = \pm \frac{1}{r}\sqrt{\omega\sqrt{P}}, \qquad k_{3, 4}(\omega) = \pm \frac{1}{r}\sqrt{-\omega\sqrt{P}},
\end{equation}
where the first index corresponds to the sign $'+'$.

We can now obtain the dispersion diagrams following the same procedure shown in previous subsection for axial waves in structured rods. To do so, it is important to emphasise that the state of the multi-supported beam is determined by the rotation $\phi(z)$ and its derivative $\phi^{'}(z)$ (or bending moment) at each supported point. This is because we assume that the beam is constrained to the support and there is no displacement there. This means that the fourth-order differential system \eqref{eqchar} only has two degrees of freedom. This setting is well established and widely studied, see also \cite{gei2010}. The state vector on the right hand side of the Fibonacci unit cell is then given by $\mathbf{v}_{F_n}=[\phi_{F_n}, \phi^{'}_{F_n}]^{T}$, and it is related to $\mathbf{v}_0=[\phi_0, \phi^{'}_0]^{T}$ through the relationship
\begin{equation}
\mathbf{v}_r=T_n(\omega)\mathbf{v}_l,
\label{multibeamcell}
\end{equation}
where, similarly to the previous cases, $T_n(\omega)=\Pi_{p=1}^{F_{n}}T^X(\omega)$ is the transfer matrix of the unit cell $\mathcal{F}_n$. For this system, the transfer matrices $T^X(\omega)$ ($X\in \left\{A, B\right\}$) associated to each constituent unit are given by \cite{gei2010}
\begin{equation} \label{eq:beammatrix}
T^X(\omega)=
\left[
\begin{array}{cc}
\cfrac{\varPsi_{bb}^{X}(\omega)}{\varPsi_{ab}^{X}(\omega)} & \varPsi_{ba}^{X}(\omega)-\cfrac{\varPsi_{bb}^{X}(\omega)\varPsi_{aa}^{X}(\omega)}{\varPsi_{ab}^{X}(\omega)}\\
\cfrac{1}{\varPsi_{ab}^{X}(\omega)} & -\cfrac{\varPsi_{aa}^{X}(\omega)}{\varPsi_{ab}^{X}(\omega)}
\end{array}
\right],
\end{equation}
where
\begin{eqnarray}
\varPsi_{aa}^{X}(\omega) & = & \frac{k_1(\omega)\cot(k_1(\omega)l_X)-k_3(\omega)\cot(k_3(\omega)l_X)}{k_3^2(\omega)-k_1^2(\omega)},\qquad 
\varPsi_{bb}^{X}(\omega) = -\varPsi_{aa}^{X}(\omega), \label{eq:Psi_aa}\\ 
\varPsi_{ab}^{X}(\omega) & = &  \frac{k_1(\omega)\csc(k_1(\omega)l_X)-k_3(\omega)\csc(k_3(\omega)l_X)}{k_1^2(\omega)-k_3^2(\omega)}, \qquad
\varPsi_{ba}^{X}(\omega) = -\varPsi_{ab}^{X}(\omega),\label{eq:Psi_ab} 
\end{eqnarray}
and $l_X$ ($X\in \left\{A, B\right\}$) is the length of the simply supported beam $A$ or $B$, representing the single element of our cells.

It is important to note that $\varPsi_{aa}^X(\omega)$ and $\varPsi_{ab}^X(\omega)$ both take only real values. This is because, although $k_3(\omega)$ is always an imaginary number, each of $k_3^2$, $k_3\cot(k_3l_X)$ and $k_3\csc(k_3l_X)$ are real. This means $T^X$ always has real-valued entries. Further, we can algebraically check that $T_n(\omega)$ satisfies the unimodularity condition and follows the recursive rule previously introduced. As a consequence, using the Floquet-Bloch condition $\mathbf{v}_r=\mathbf{v}_l e^{iKL_n}$ into equation (\ref{multibeamcell}), we derive a dispersion relation similar to (\ref{disp}).

\begin{figure}
    \centering
    \begin{subfigure}{0.475\textwidth}
        \includegraphics[width=\linewidth,trim=1cm 0 1.5cm 0,clip]{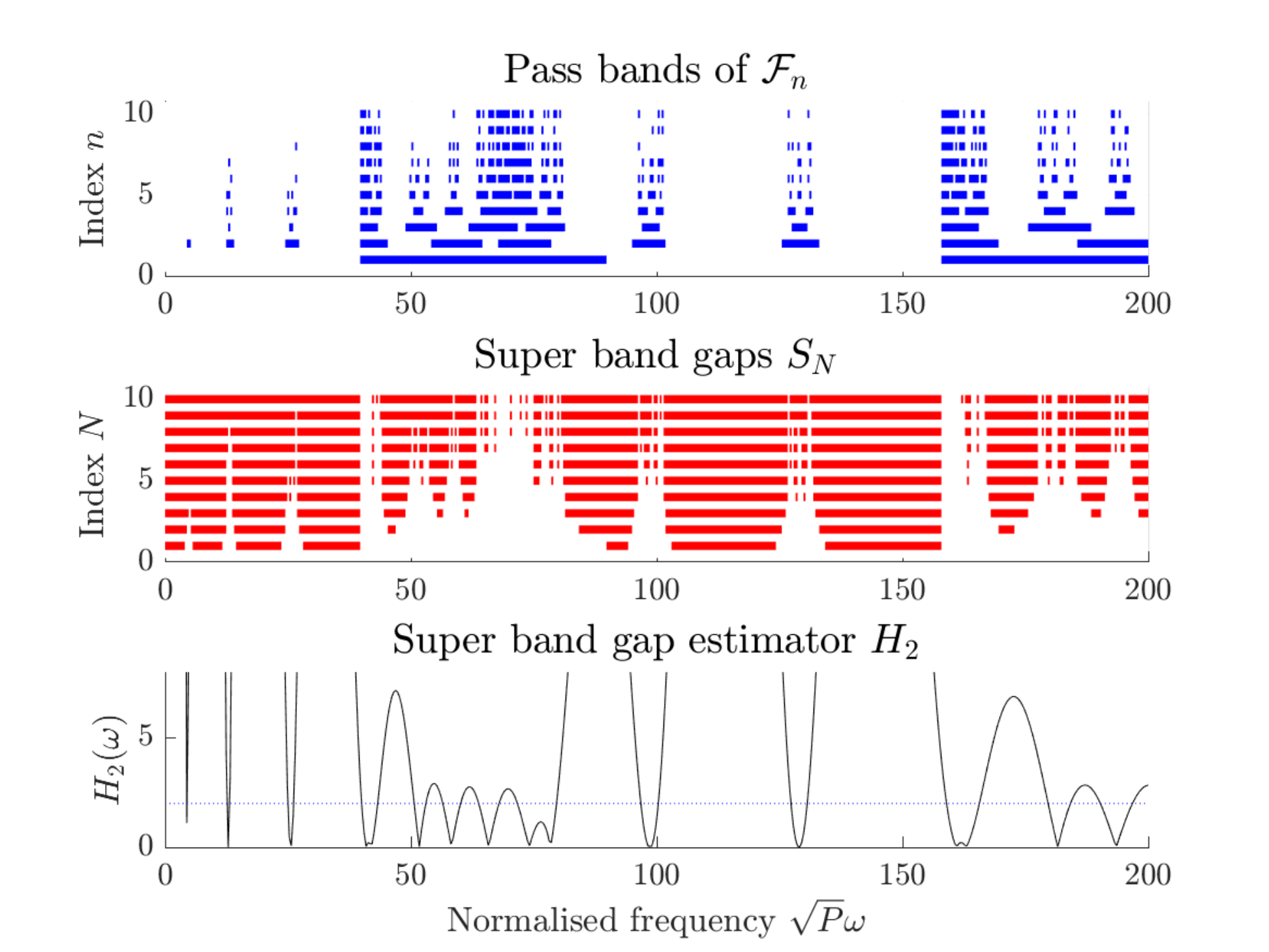}
        \caption{Golden mean Fibonacci ($m=1$, $l=1$).}
    \end{subfigure}
    \hfill
    \begin{subfigure}{0.475\textwidth}
        \includegraphics[width=\linewidth,trim=1cm 0 1.5cm 0,clip]{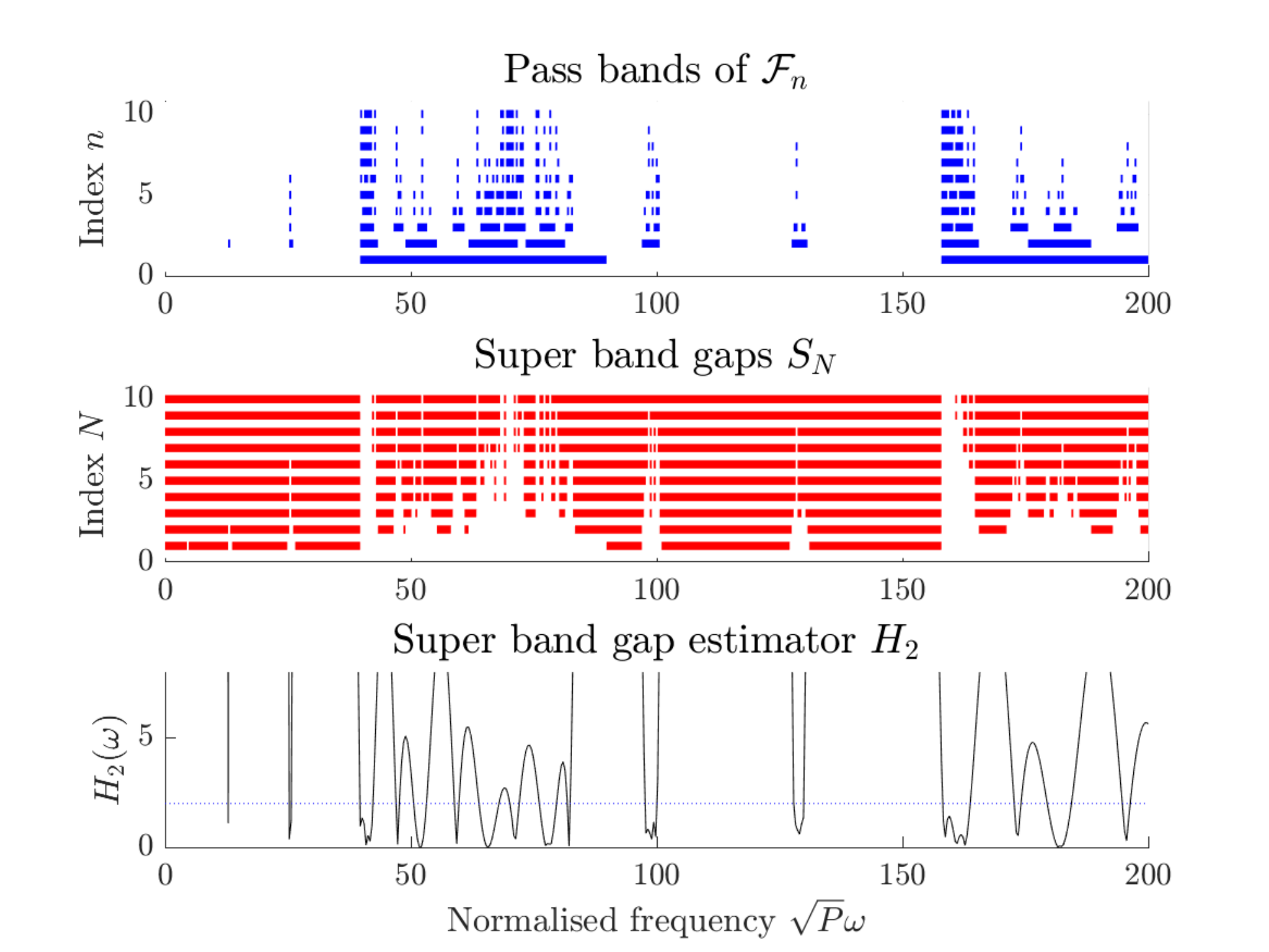}
        \caption{Silver mean Fibonacci ($m=2$, $l=1$).}
    \end{subfigure}

    \vspace{0.2cm}

    \begin{subfigure}{0.475\textwidth}
        \includegraphics[width=\linewidth,trim=1cm 0 1.5cm 0,clip]{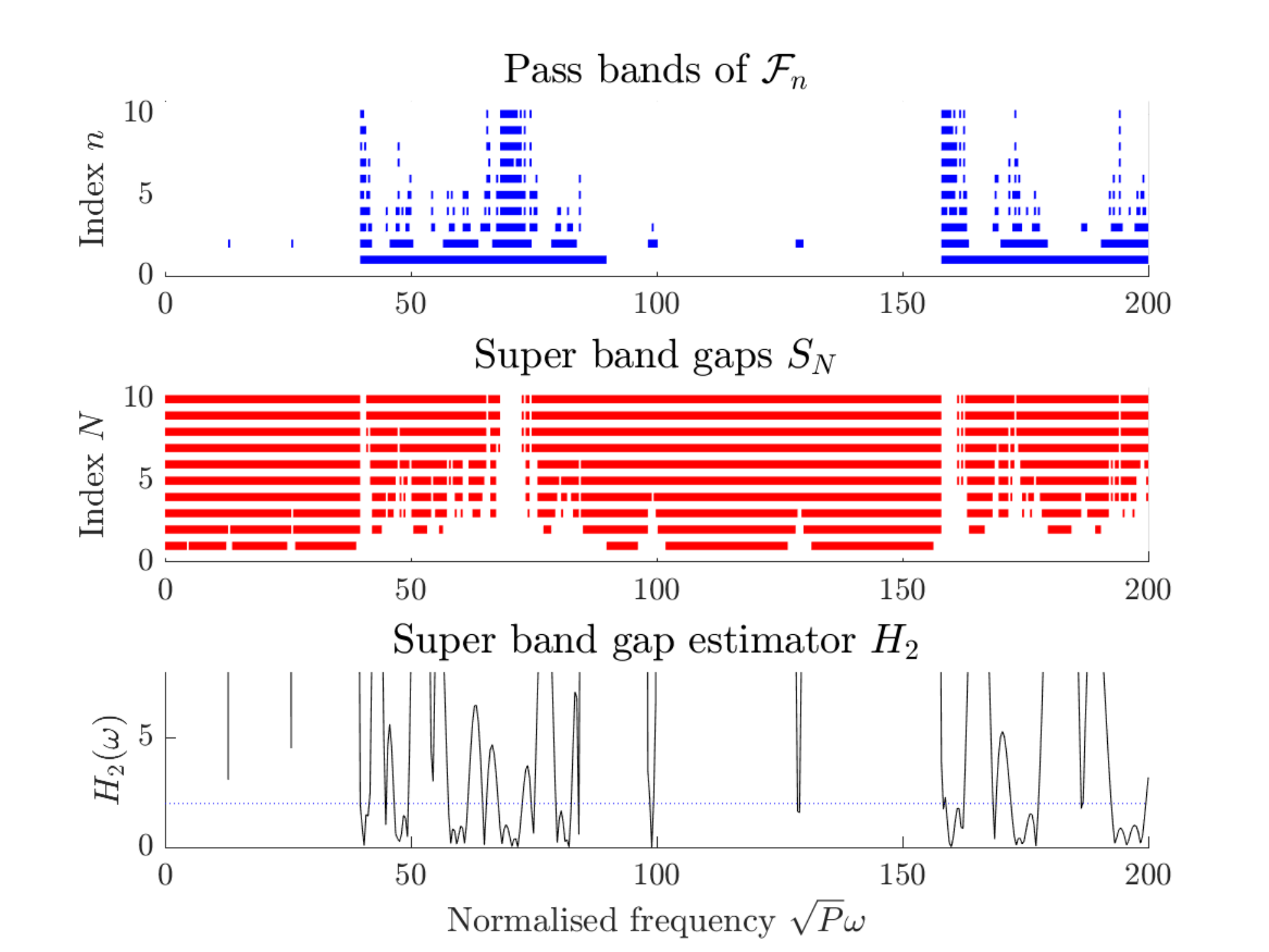}
        \caption{Bronze mean Fibonacci ($m=3$, $l=1$).}
    \end{subfigure}
    \hfill
    \begin{subfigure}{0.475\textwidth}
        \includegraphics[width=\linewidth,trim=1cm 0 1.5cm 0,clip]{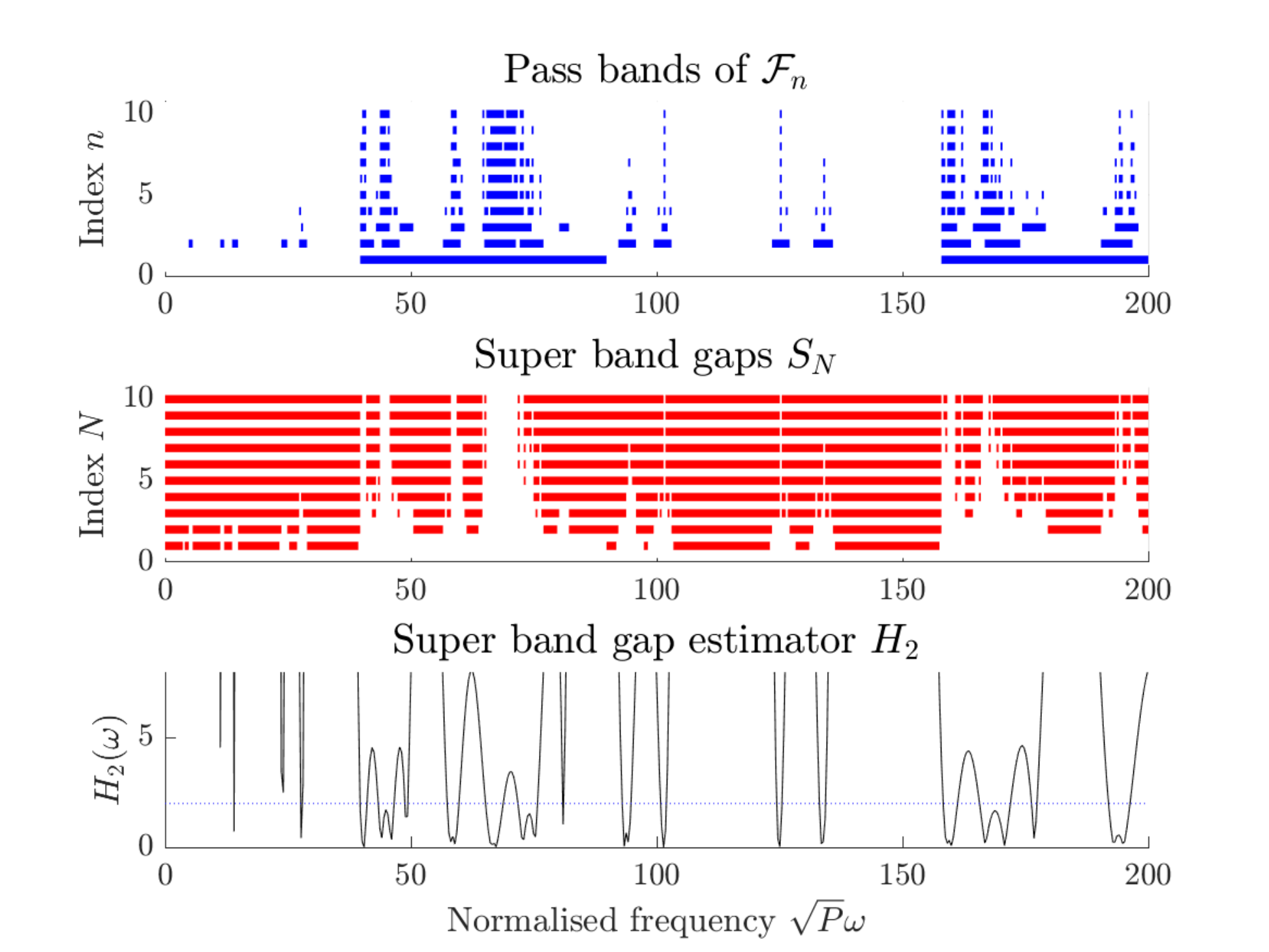}
        \caption{Copper mean Fibonacci ($m=1$, $l=2$).}
    \end{subfigure}
    
    \caption{The pass bands and super band gaps of a continuous beam with supports modulated according to generalized Fibonacci tilings $\F_n$. For each tiling, the top plot shows the pass bands for each successive Fibonacci tiling $\F_n$, the middle shows the super band gaps $\S_n$, as predicted by the corresponding theorem, and the bottom shows the super band gap estimator $H_2$, as used in previous works and defined in \eqref{eq:Hdefn}. We use the distances $4l_A=l_B=0.1$m between the supports and assume that all the material parameters are constant. In particular, we take $r=0.05$m and plot the normalised frequency $\sqrt{P}\omega$ on the horizontal axes.}
    \label{fig:SBG_beam}
\end{figure}

The pattern of pass and stop bands for this continuous system of multi-supported beams is shown in Figure~\ref{fig:SBG_beam} for several generalised Fibonacci tilings. As with the previous examples, a complex pattern of band gaps emerges and the super band gap theory is able to accurately predict this complex behaviour. In this case, the super band gap estimator $H_2$, that was developed in previous works (and is shown in the lower subplots), particularly struggles to reveal useful information about the detailed structure of the spectrum, demonstrating the value of our new theory.

A notable feature of the spectra in Figure~\ref{fig:SBG_beam} is the occurrence of low-frequency super band gaps. As was the case for the high-frequency super band gaps that occurred in the mass-spring system, this can be understood by looking at the structure of the transfer matrices. We recall the function $\sgn:\mathbb{R}\to\{-1,0,1\}$ given by $\sgn(x)=x/|x|$ (and $\sgn(0)=0$). Then, we introduce the sets of unimodular matrices $\Sigma_+$ and $\Sigma_-$ given by
\begin{align}
    \Sigma_+&:=\left\{ M\in\mathbb{R}^{2\times2}: \det(M)=1, \, \sgn(M_{11})=\sgn(M_{22})=1 \text{ and }  \sgn(M_{12})=\sgn(M_{21})=-1 \right\},\\
    \Sigma_-&:=\left\{ M\in\mathbb{R}^{2\times2}: -M\in\Sigma_+ \right\}.
\end{align}

\begin{lemma} \label{lem:sigminus}
    Let $T^X(\omega)$ be the transfer matrix of the multi-supported beam, as defined in \eqref{eq:beammatrix}. There exists some $\omega^{*,X}>0$ such that $T^X(\omega)\in\Sigma_-$ for all $0<\omega<\omega^{*,X}$. Further, it holds that
    \begin{equation*}
        T^X(\omega)=
        \left[\begin{array}{cc}
        -2 & {l_X}/{2}\\ {6}/{l_X} & -2 \end{array}\right] +O(\omega), \quad\text{as }\omega\to0.
    \end{equation*}
\end{lemma}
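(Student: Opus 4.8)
The plan is to compute the small-$\omega$ asymptotics of the four scalars $\varPsi^X_{aa}$, $\varPsi^X_{ab}$ (and hence $\varPsi^X_{bb}=-\varPsi^X_{aa}$, $\varPsi^X_{ba}=-\varPsi^X_{ab}$) directly from \eqref{eq:Psi_aa}--\eqref{eq:Psi_ab}, substitute into \eqref{eq:beammatrix}, and read off the leading matrix. From \eqref{eq:Kdefn} we have $k_1(\omega) = \frac1r\sqrt{\omega\sqrt P}$ and $k_3(\omega) = \frac{i}{r}\sqrt{\omega\sqrt P}$, so writing $s:=\sqrt{\omega\sqrt P}/r$ we get $k_1 = s$, $k_3 = is$, $k_1^2 - k_3^2 = 2s^2$, and $s\to0$ as $\omega\to0$. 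The first step is to Taylor-expand $k_1\cot(k_1 l_X) - k_3\cot(k_3 l_X)$ and $k_1\csc(k_1 l_X) - k_3\csc(k_3 l_X)$ in powers of $s$. Using $u\cot u = \tfrac1{l_X}(1 - \tfrac{(ul_X)^2}{3} - \tfrac{(ul_X)^4}{45} - \cdots)$ and the analogous $u\csc u = \tfrac1{l_X}(1 + \tfrac{(ul_X)^2}{6} + \tfrac{7(ul_X)^4}{360}+\cdots)$ (here $u = k_1 l_X$ or $k_3 l_X = i k_1 l_X$), the constant terms cancel in each numerator, the $s^2$ terms combine (note $k_3^2 = -k_1^2$ flips the sign of the even-power correction), and after dividing by $k_3^2 - k_1^2 = -2s^2$ (resp.\ $k_1^2-k_3^2 = 2s^2$) one obtains finite limits. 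A short computation gives
\begin{equation*}
    \varPsi^X_{aa}(\omega) = \frac{l_X}{3} + O(\omega), \qquad
    \varPsi^X_{ab}(\omega) = \frac{l_X}{6} + O(\omega),
\end{equation*}
where the leading constants come from the $s^2$-coefficients $-l_X/3$ (cotangent) and $l_X/6$ (cosecant) after the cancellations. (The $O(\omega)$ remainder, rather than $O(\omega^2)$, arises because $s^2$ is linear in $\omega$.)

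Next, I substitute these into \eqref{eq:beammatrix}. The $(1,1)$ entry is $\varPsi^X_{bb}/\varPsi^X_{ab} = -\varPsi^X_{aa}/\varPsi^X_{ab} \to -(l_X/3)/(l_X/6) = -2$, and likewise the $(2,2)$ entry is $-\varPsi^X_{aa}/\varPsi^X_{ab}\to -2$. The $(2,1)$ entry is $1/\varPsi^X_{ab}\to 6/l_X$. For the $(1,2)$ entry, $\varPsi^X_{ba} - \varPsi^X_{bb}\varPsi^X_{aa}/\varPsi^X_{ab} = -\varPsi^X_{ab} + (\varPsi^X_{aa})^2/\varPsi^X_{ab}$; plugging in the leading terms gives $-l_X/6 + (l_X/3)^2/(l_X/6) = -l_X/6 + 2l_X/9 \cdot \tfrac{?}{}$ — this is precisely the arithmetic I would carry through carefully, and it should yield $l_X/2$ (one must keep the $O(\omega)$ corrections to $\varPsi^X_{ab}$ in the denominator only if they contribute at leading order, which they do not, since the numerator's leading term is nonzero). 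This establishes the stated asymptotic formula for $T^X(\omega)$.

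Finally, to get membership in $\Sigma_-$ for small $\omega$: the limiting matrix has $\sgn$-pattern $\left(\begin{smallmatrix}-&+\\+&-\end{smallmatrix}\right)$, which is exactly $-\left(\begin{smallmatrix}+&-\\-&+\end{smallmatrix}\right)$, i.e.\ the sign pattern defining $\Sigma_-$. Since each entry of $T^X(\omega)$ is a continuous function of $\omega$ on a punctured neighbourhood of $0$ (the denominators $\varPsi^X_{ab}$ are nonzero for small $\omega>0$ because they tend to $l_X/6\neq0$) converging to a value with the stated strict sign, there is $\omega^{*,X}>0$ such that all four entries retain these strict signs for $0<\omega<\omega^{*,X}$; unimodularity holds for all $\omega$ by the general theory of Section~\ref{gentheory}. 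Hence $T^X(\omega)\in\Sigma_-$ on that interval. The main obstacle is purely computational: getting the $(1,2)$ entry right requires tracking the first-order cancellation in the $\varPsi$-expansions accurately, and one must be slightly careful that the series for $k_3\cot(k_3l_X)$ and $k_3\csc(k_3l_X)$ are evaluated at an imaginary argument (so they become $\coth$ and $\mathrm{csch}$ of a real argument, with the corresponding sign changes in the power-series coefficients) — but there is no conceptual difficulty.
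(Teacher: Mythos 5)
Your proposal is correct and follows essentially the same route as the paper: Taylor-expand $k\cot(kl_X)$ and $k\csc(kl_X)$ in powers of $k^2$, use $k_3^2=-k_1^2$ to get $\varPsi^X_{aa}=l_X/3+O(\omega)$ and $\varPsi^X_{ab}=l_X/6+O(\omega)$, substitute into \eqref{eq:beammatrix}, and conclude membership in $\Sigma_-$ by continuity of the entries near the nonzero limit. The one piece you left dangling, the $(1,2)$ entry, does come out as claimed: $-l_X/6+(l_X/3)^2\cdot(6/l_X)=-l_X/6+2l_X/3=l_X/2$.
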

\begin{proof}
    Suppose that $\omega\to\infty$ while all the other parameters are kept constant. Recalling \eqref{eq:Kdefn}, we see that $k_i=O({\omega}^{1/2})$ and, using the Taylor series for $\cot$ and $\csc$,
    \begin{align}
        k_1(\omega)\cot(k_1(\omega)l_X)=\frac{1}{l_X}-\frac{k_1^2l_X}{3}+O(\omega^2)
        \quad\text{and}\quad
        k_1(\omega)\csc(k_1(\omega)l_X)=\frac{1}{l_X}-\frac{k_1^2l_X}{6}+O(\omega^2),
    \end{align}
    as $\omega\to0$. Substituting these expressions into \eqref{eq:Psi_aa} and \eqref{eq:Psi_ab} gives us that
    \begin{equation}
        \varPsi_{aa}^X = \frac{l_X}{3}+O(\omega)
        \quad\text{and}\quad
        \varPsi_{ab}^X = \frac{l_X}{6}+O(\omega),
    \end{equation}
    as $\omega\to0$. Substituting this into the expression \eqref{eq:beammatrix} we obtain the leading-order expression for $T^X$. Since the leading-order matrix is in $\Sigma_-$, $T^X$ will be in $\Sigma_-$ provided $\omega$ is sufficiently small.
\end{proof}

\begin{lemma} \label{lem:sigmaparity}
    Suppose that $0<\omega<\min\{\omega^{*,A},\omega^{*,B}\}$ and let $T_n$ be the transfer matrix associated to a multi-supported beam with fundamental cell designed according to a generalised Fibonacci substitution rule \eqref{eq:tiling} with arbitrary $m,l\geq1$. $T_n\in\Sigma_-$ if $F_n$ is odd and $T_n\in\Sigma_+$ if $F_n$ is even.
\end{lemma}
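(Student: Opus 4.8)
The plan is to reduce the statement to a purely algebraic fact about products of matrices with prescribed sign patterns. Recall from the setup of this section that the cell transfer matrix is the ordered product $T_n(\omega)=\prod_{p=1}^{F_n}T^{X_p}(\omega)$ of exactly $F_n$ single-element transfer matrices, each equal to $T^A(\omega)$ or $T^B(\omega)$. Since $0<\omega<\min\{\omega^{*,A},\omega^{*,B}\}$, Lemma~\ref{lem:sigminus} tells us that \emph{every} factor in this product lies in $\Sigma_-$. Hence the lemma follows once we show: the ordered product of $k$ matrices, each lying in $\Sigma_-$, lies in $\Sigma_+$ when $k$ is even and in $\Sigma_-$ when $k$ is odd.

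To prove this, I would introduce the involution $D=\mathrm{diag}(1,-1)$ and observe that conjugation by $D$ turns the sign conditions defining $\Sigma_\pm$ into conditions on the \emph{entries}: a unimodular $M$ lies in $\Sigma_+$ precisely when all four entries of $DMD$ are strictly positive, and in $\Sigma_-$ precisely when all four entries of $DMD$ are strictly negative. For a product, inserting $D^2=I$ between consecutive factors gives $D(M_1\cdots M_k)D=(DM_1D)(DM_2D)\cdots(DM_kD)$. Each entry of a product of two $2\times2$ matrices is a sum of two products of real entries, so if both factors are entrywise negative then each such product is positive and the result is entrywise positive, while if one factor is entrywise negative and the other entrywise positive the result is entrywise negative — and in neither case can an entry vanish. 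A one-line induction on $k$ then gives that $D(M_1\cdots M_k)D$ is entrywise positive for even $k$ and entrywise negative for odd $k$; combined with the fact that a product of unimodular matrices is unimodular, this is exactly the claim that $M_1\cdots M_k\in\Sigma_+$ or $\Sigma_-$ according to the parity of $k$.

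Applying this with $k=F_n$ and $M_p=T^{X_p}(\omega)\in\Sigma_-$ yields $T_n(\omega)\in\Sigma_-$ when $F_n$ is odd and $T_n(\omega)\in\Sigma_+$ when $F_n$ is even, which is the assertion. (One may equally bypass the conjugation and simply verify by a single $2\times2$ multiplication that $\Sigma_-\cdot\Sigma_-\subseteq\Sigma_+$ and $\Sigma_-\cdot\Sigma_+\subseteq\Sigma_-$ and then induct; the $D$-conjugation is just a transparent way of packaging both closure relations and of seeing why no entry is lost to cancellation.)

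I do not expect a genuine obstacle here: the computation is elementary. The only point demanding a little care — and the reason the argument works at all — is checking that in every entry of the relevant products all contributing terms carry the same sign, so that the entries are not merely nonnegative but strictly signed; this is what keeps the image inside $\Sigma_\pm$, whose definition (via $\sgn$) imposes strict sign conditions, rather than in some degenerate boundary case.
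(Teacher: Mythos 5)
Your proof is correct and follows essentially the same route as the paper: both arguments reduce to the closure relations $\Sigma_-\cdot\Sigma_-\subseteq\Sigma_+$ and $\Sigma_-\cdot\Sigma_+\subseteq\Sigma_-$ (which hold with no cancellation because all contributing terms in each entry share a sign) followed by induction on the number of factors $F_n$, each of which lies in $\Sigma_-$ by Lemma~\ref{lem:sigminus}. Your conjugation by $D=\mathrm{diag}(1,-1)$ is just a tidy repackaging of the same sign bookkeeping, as you note yourself.
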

\begin{proof}
    From Lemma~\ref{lem:sigminus}, we have that both $T^A\in\Sigma_-$ and $T^B\in\Sigma_-$. It is straightforward to verify that
    \begin{equation}
        \Sigma_-\otimes\Sigma_-= \Sigma_+\otimes\Sigma_+=\Sigma_+ \quad\text{and} \quad 
        \Sigma_-\otimes\Sigma_+= \Sigma_+\otimes\Sigma_-=\Sigma_-.
    \end{equation}
    Then, if $F_n$ is even, $T_n$ is the product of an even number of matrices from $\Sigma_-$, meaning it is the product of $F_n/2$ matrices from $\Sigma_+$, so $T_n\in\Sigma_-$. Conversely, if $F_n$ is odd, then $T_n$ may be written as the product of $F_n-1$ matrices in $\Sigma_-$ and another matrix in $\Sigma_-$. Since $F_n-1$ is even, the first of these two terms is in $\Sigma_+$, meaning $T_n\in\Sigma_+\otimes\Sigma_-=\Sigma_-$.
\end{proof}

We are now in a position to prove an analogous result to Theorem~\ref{thm:highfreqgaps}, which demonstrates the existence of low-fequency super band gaps for the multi-supported beam. From Lemma~\ref{lem:sigminus}, we can see that $\omega$ will be in a band gap of both $\F_0$ and $\F_1$ if it is sufficiently small. However, as was the case for the discrete system, we must take advantage of the specific structure of the transfer matrices in this regime to prove a result.

\begin{thm}
    Consider a multi-supported beam with behaviour governed by the equation \eqref{multibeamcell} and fundamental cells designed according to a generalised Fibonacci substitution rule \eqref{eq:tiling} with arbitrary $m,l\geq1$. There exists some $\omega^*>0$ such that if $0<\omega<\omega^*$ then $\omega$ is in the super band gap $\S_0$.
\end{thm}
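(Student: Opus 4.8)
The plan is to combine the two structural lemmas just established (Lemma~\ref{lem:sigminus} and Lemma~\ref{lem:sigmaparity}) with one short algebraic observation: every matrix lying in $\Sigma_+$ or $\Sigma_-$ automatically has $|\tr|>2$. Once this is in hand, the theorem is essentially immediate, since Lemma~\ref{lem:sigmaparity} places each $T_n(\omega)$ in $\Sigma_+\cup\Sigma_-$ when $\omega$ is small.

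First I would prove the trace bound. Let $M\in\Sigma_+$, so $M_{11},M_{22}>0$, $M_{12},M_{21}<0$ and $\det M=1$. Then $M_{11}M_{22}=1+M_{12}M_{21}$, and since $M_{12}$ and $M_{21}$ are \emph{strictly} negative we have $M_{12}M_{21}>0$, hence $M_{11}M_{22}>1$. The arithmetic–geometric mean inequality then gives $\tr(M)=M_{11}+M_{22}\geq 2\sqrt{M_{11}M_{22}}>2$. Applying this to $-M\in\Sigma_+$ shows that $\tr(M)<-2$ for every $M\in\Sigma_-$. In particular $|\tr(M)|>2$ for all $M\in\Sigma_+\cup\Sigma_-$.

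Next I would assemble the proof. Set $\omega^*:=\min\{\omega^{*,A},\omega^{*,B}\}$, where $\omega^{*,A}$ and $\omega^{*,B}$ are the thresholds furnished by Lemma~\ref{lem:sigminus}. Fix any $\omega$ with $0<\omega<\omega^*$. By Lemma~\ref{lem:sigmaparity}, $T_n(\omega)\in\Sigma_-$ whenever $F_n$ is odd and $T_n(\omega)\in\Sigma_+$ whenever $F_n$ is even; since every $F_n$ is either odd or even, we conclude $T_n(\omega)\in\Sigma_+\cup\Sigma_-$ for all $n\geq0$. Combining with the trace bound of the previous step, $|x_n(\omega)|=|\tr(T_n(\omega))|>2$ for every $n\geq0$, so by the dispersion relation \eqref{dispersion} the frequency $\omega$ lies in a band gap of $\F_n$ for all $n$; that is, $\omega\in\S_0$.

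There is no substantial obstacle here: the preceding lemmas do the real work, and the only new ingredient is the one-line AM–GM argument. The two points to handle carefully are (i) the strictness of the sign conditions in the definitions of $\Sigma_\pm$, which is exactly what upgrades $M_{11}M_{22}\geq1$ to the strict inequality needed to get $|\tr|>2$ rather than $|\tr|\geq2$, and (ii) taking $\omega^*=\min\{\omega^{*,A},\omega^{*,B}\}$ so that the hypothesis of Lemma~\ref{lem:sigmaparity} is met for both constituent elements simultaneously.
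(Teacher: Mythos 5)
Your proof is correct, but it takes a genuinely different and noticeably shorter route than the paper. The paper proves the quantitative bound $|(T_n)_{11}|\geq 2^{F_n}$ and $|(T_n)_{22}|\geq 2^{F_n}$ by induction on $n$, exploiting the fact that the two terms in each entry of a product of matrices from $\Sigma_+\cup\Sigma_-$ have the same sign (so no cancellation occurs), and then concludes $|\tr(T_n)|\geq 2^{F_n+1}>2$ from the sign agreement of the diagonal entries. Your argument instead extracts everything from a single algebraic observation: for $M\in\Sigma_+$ the unimodularity $M_{11}M_{22}=1+M_{12}M_{21}$ together with the strict negativity of the off-diagonal entries forces $M_{11}M_{22}>1$, whence AM--GM gives $\tr(M)>2$, and the case $\Sigma_-$ follows by negation; combined with Lemma~\ref{lem:sigmaparity} this immediately yields $|x_n(\omega)|>2$ for all $n$. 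What the paper's approach buys is an explicit exponential lower bound on the trace (hence a quantitative measure of how deep inside the gap these frequencies sit), at the cost of an induction whose base case leans on the leading-order entries of $T^X$ being exactly $\pm2$ as $\omega\to0$, which requires a little care for small positive $\omega$. Your version buys robustness and brevity: it uses only the qualitative sign pattern and determinant condition, needs no induction beyond what Lemma~\ref{lem:sigmaparity} already supplies, and the strictness of the sign conditions in the definition of $\Sigma_\pm$ (your point (i)) is precisely what upgrades the conclusion to the strict inequality $|\tr|>2$ required by the definition of $\S_0$. Both proofs are valid; yours is the more economical for establishing the theorem as stated.
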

\begin{proof}
    The key to our argument is proving that 
    \begin{equation} \label{eq:diaggrowth}
        |(T_n)_{11}|\geq 2^{F_n} \quad\text{and}\quad |(T_n)_{22}|\geq 2^{F_n}.
    \end{equation}
    We first consider the golden mean Fibonacci case, where $m=l=1$, and proceed by induction. From Lemma~\ref{lem:sigminus}, we can see that \eqref{eq:diaggrowth} holds for both $T_0=T^B$ and $T_1=T^A$. Then, for an arbitrary $n\geq1$, it holds for the golden mean Fibonacci tiling that
    \begin{equation} \label{eq:matmul}
        (T_{n+1})_{11}=(T_{n-1})_{11}(T_{n})_{11}+(T_{n-1})_{12}(T_{n})_{21}
        \quad\text{and}\quad
        (T_{n+1})_{22}=(T_{n-1})_{22}(T_{n})_{22}+(T_{n-1})_{21}(T_{n})_{12}.
    \end{equation}
    Thanks to Lemma~\ref{lem:sigmaparity} we know that $T_{n-1},T_n\in\Sigma_+\cup\Sigma_-$, hence it holds that $\sgn((T_{n-1})_{11}(T_{n})_{11})=\sgn((T_{n-1})_{12}(T_{n})_{21})$ and similarly $\sgn((T_{n-1})_{22}(T_{n})_{22})=\sgn((T_{n-1})_{21}(T_{n})_{12})$. As a result, \eqref{eq:matmul} gives us that 
    \begin{align} 
        |(T_{n+1})_{11}|&>|(T_{n-1})_{11}(T_{n})_{11}|\geq 2^{F_{n-1}+F_n}=2^{F{n+1}},\\
        |(T_{n+1})_{22}|&>|(T_{n-1})_{22}(T_{n})_{22}|\geq 2^{F_{n-1}+F_n}=2^{F{n+1}}.
    \end{align}
    Then, we can proceed by induction to conclude that \eqref{eq:diaggrowth} holds for all $n$, for the golden mean Fibonacci case. For arbitrary $m,l\geq 1$, we can use a similar argument, where the key step is to realise that the terms in the equivalent expansion to \eqref{eq:matmul} all have the same sign. As a result, we have the desired bounds
    \begin{align} 
        |(T_{n+1})_{11}|&>|(T_{n-1})_{11}^l(T_{n})_{11}^m|\geq 2^{lF_{n-1}+mF_n}=2^{F{n+1}},\\
        |(T_{n+1})_{22}|&>|(T_{n-1})_{22}^l(T_{n})_{22}^m|\geq 2^{lF_{n-1}+mF_n}=2^{F{n+1}},
    \end{align}
    meaning \eqref{eq:diaggrowth} holds for any generalised Fibonacci tiling.
    
    Finally, thanks to Lemma~\ref{lem:sigmaparity}, we know that $(T_n)_{11}$ and $(T_n)_{22}$ must have the same sign. Hence, it follows from \eqref{eq:diaggrowth} that $|\tr(T_n)|\geq 2^{F_n+1}>2$, so $\omega$ must be in a band gap for all $n$.
\end{proof}

\section{Periodic approximants} \label{sec:periodic}

The aim of this final section is to demonstrate that our theory of super band gaps is not only useful for predicting band gaps in Fibonacci-generated periodic materials, but also for predicting the dynamical properties of real non-periodic quasicrystalline structures. To this end, we take a finite-sized piece of a one-dimensional Fibonacci quasicrystal and compare its transmission coefficient with the stop/pass band diagrams obtained by applying the Floquet-Bloch theory to infinite periodic waveguides generated according to consecutive Fibonacci cells $\mathcal{F}_n$. We will present results for the case of a structured rod, as studied in Section~\ref{sec:rods}, but it is reasonable to expect similar behaviour for the other physical systems also.

\begin{figure}[htbp]
    \centering
    \includegraphics[width=0.95\linewidth]{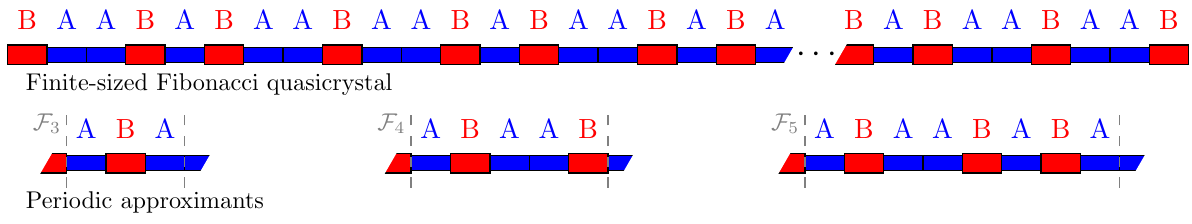}
    \caption{Our results show that the main spectral gaps of a Fibonacci quasicrystal can be faithfully predicted by periodic approximants. We compare the transmission coefficient of a quasiperiodic structured rod of finite length with the Bloch spectra of periodic approximants.}
    \label{fig:finitesystems}
\end{figure}

The Fibonacci quasicrystal we take, as a demonstrative example, is a finite rod formed by joining together golden mean cells $\F_0$, $\F_1$ all the way up to $\F_6$. This gives a structure composed of 32 different phases $A$ and $B$, as depicted in Figure~\ref{fig:finitesystems}. Considering axial vibrations propagating in this system, the global transfer matrix is defined as $T_{G}(\omega)=\Pi_{n=1}^{6}T_n(\omega)$, where $T_n(\omega)$ are the matrices associated with the cells $\F_n$ that were introduced in Section~\ref{sec:rods}. According to the method adopted in \cite{gei2020phononic}, it can be shown that the trasmission coefficient for a finite quasicrystalline sample is given by 
\begin{equation}
    T_c(\omega)=\frac{u_l}{u_r}=\frac{1}{T_{G22}(\omega)},
\end{equation}
where $T_{G22}$ is the lower-right entry of the $2\times2$ square matrix $T_{G}$.  

In Figure~\ref{fig:compFB.pdf}, the transmission coefficient $T_c(\omega)$ for the finite quasicrystalline rod is plotted using a logarithmic scale. In each of the four plots, this is compared with the super band gaps predicted by $\mathcal{F}_2, \mathcal{F}_3, \mathcal{F}_4$ and $\mathcal{F}_5$ (\emph{i.e.} the sets $\S_2$, $\S_3$, $\S_4$ and $\S_5$, to use the notation from Section~\ref{gapstheory}). For these numerical computations we adopted a setup that leads to a periodic and symmetric spectrum, as mentioned in  Section~\ref{sec:rods} and referred to as the ``canonical configuration'' in \cite{gei2020phononic}. Therefore, the results reported for one period describe the dispersion properties for the whole range of real frequencies. We observe that, as the order of the Fibonacci unit cells increases, the super band gaps given by the periodic rods (denoted by the grey shaded areas) closely match the frequency intervals where the transmission coefficient is small, corresponding to a significant attenuation of the propagation in the finite structure, until they become almost coincident for $\mathcal{F}_5$. This demonstrates that the super band gaps corresponding to a periodic infinite rod with a relatively short fundamental cell approximate with excellent accuracy the spectrum of finite non-periodic quasicrystalline structures.

\begin{figure}[htbp]
    \centering
    \includegraphics[scale=0.8]{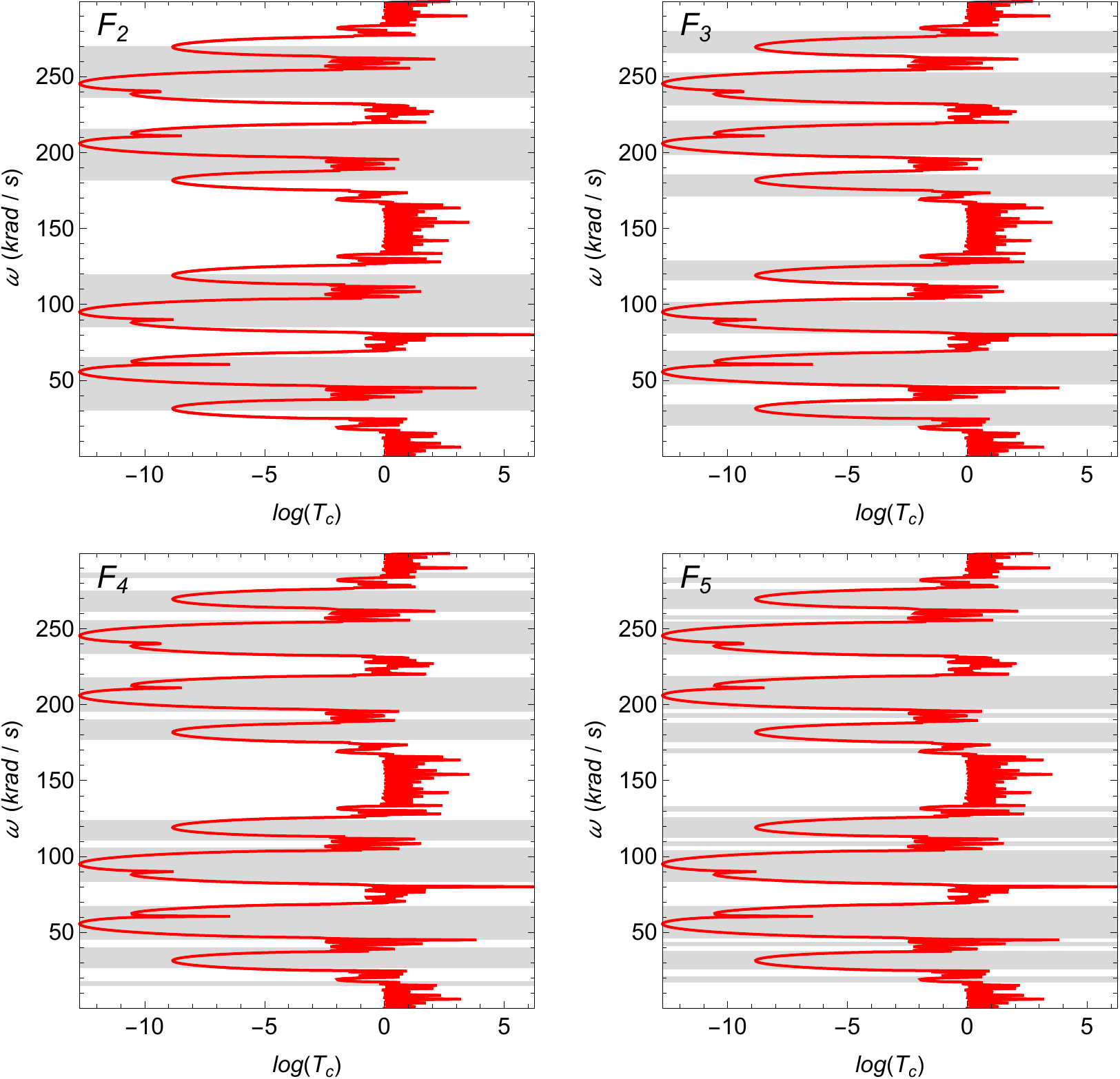}
    \caption{Transmission coefficient for a finite quasicrystalline rod composed of golden mean cells $\mathcal{F}_0$ to $\mathcal{F}_6$ (red line) compared with the super band gaps of infinite structures generated according to $\mathcal{F}_2$, $\mathcal{F}_3$, $\mathcal{F}_4$ and $\mathcal{F}_5$ (grey shaded areas). We use the parameter values $E_A=E_B=3.3$GPa, $\rho_A=\rho_B=1140$kg/m$^3$, $4A_A=A_B=1.963\times20^{-3}$m$^2$, $l_A=2l_B=0.07$m. The frequency range is shown on the vertical axis and the logarithm of the transmission coefficient $\log{T_c}$ is plotted on the horizontal axis.}
    \label{fig:compFB.pdf}
\end{figure}

In Figure~\ref{fig:compSpectrum.pdf}, the same transmission coefficient $T_c(\omega)$ is plotted, but this time we compare it to the transmission spectrum of a periodic approximant. In this case, the transmission coefficient for a finite-sized piece of periodic material is shown with a dotted line, again for the simple golden mean Fibonacci tilings. The finite pieces of periodic material are composed of $\mathcal{N}$ elementary cells $\mathcal{F}_2, \mathcal{F}_3, \mathcal{F}_4$ and $\mathcal{F}_5$. The global transfer matrix for these finite periodic rods is then defined as $T_{G}(\omega)=T_n^{\mathcal{N}}(\omega)$. The results reported in Figure~\ref{fig:compSpectrum.pdf} are obtained assuming $\mathcal{N}=7$, and the number of elements composing the samples is $\tilde{F}_n=7F_n$, where $F_n$ is the Fibonacci golden number corresponding to the phases contained in $\mathcal{F}_n$ (i.e. for $\mathcal{F}_2$ finite rod, $F_2=2$ and $\tilde{F}_2=14$). Even when the periodic approximant has a small unit cell (so the approximation is relatively crude), such as for example in the case of $\mathcal{F}_3$ ($F_3=3$ and $\tilde{F}_3=21$), the main spectral gaps are accurately predicted. This is naturally explained by our theory for super band gaps, which demonstrates the existence of frequency ranges which will always be in spectral gaps, for any size of Fibonacci quasicrystal.

\begin{figure}[htbp]
    \centering
    \includegraphics[scale=0.8]{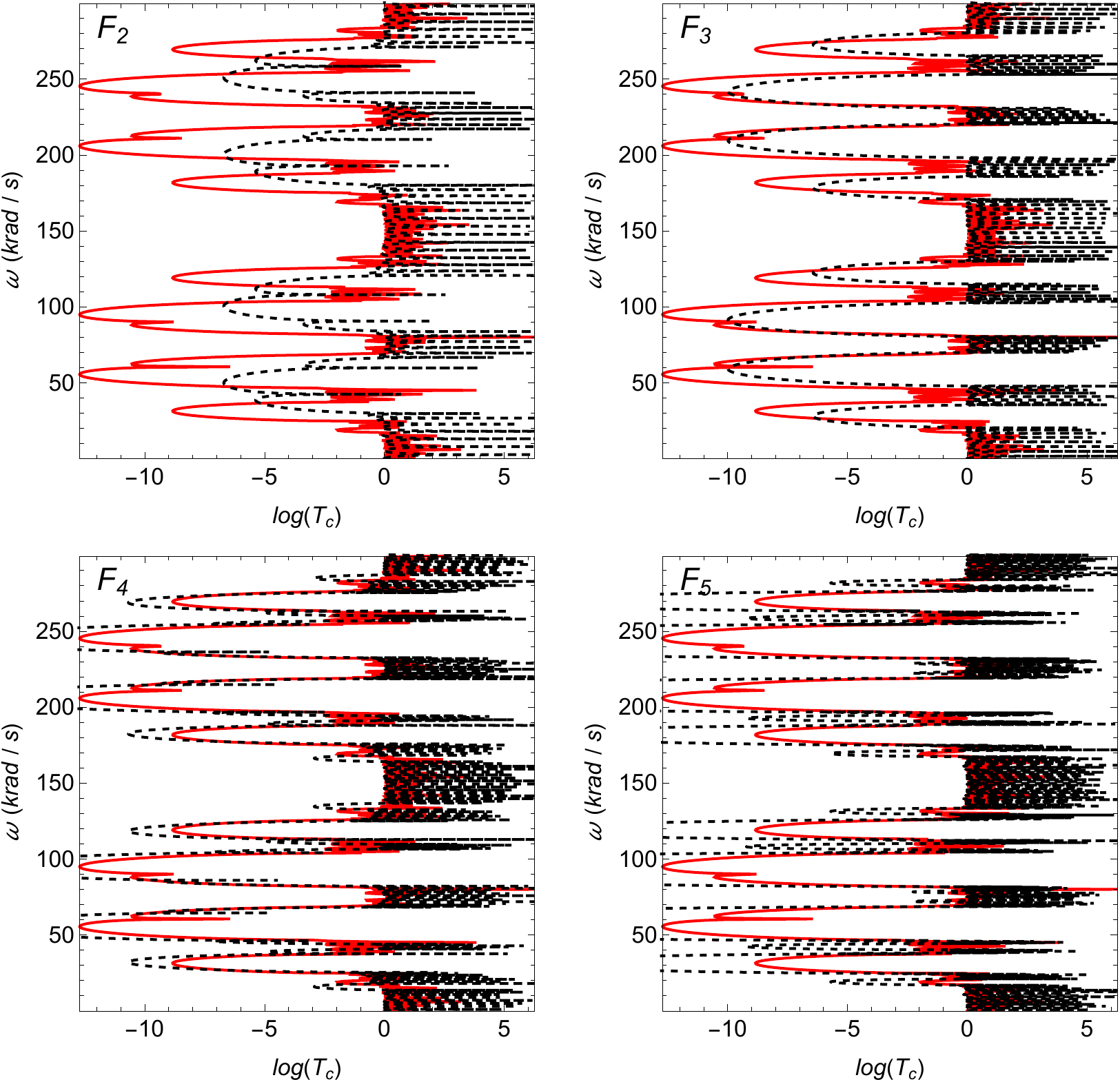}
    \caption{Transmission coefficients for a finite quasicrystalline rod composed of golden mean cells $\mathcal{F}_0$ to $\mathcal{F}_6$ (red line) and for finite periodic samples consisting in seven $\mathcal{F}_2$, $\mathcal{F}_3$, $\mathcal{F}_4$ and $\mathcal{F}_5$ cells (black dashed lines). We use the parameter values $E_A=E_B=3.3$GPa, $\rho_A=\rho_B=1140$kg/m$^3$, $4A_A=A_B=1.963\times20^{-3}$m$^2$, $l_A=2l_B=0.07$m.  The frequency range is shown on the vertical axis and the logarithm of the transmission coefficient $\log{T_c}$ is plotted on the horizontal axis.}
    \label{fig:compSpectrum.pdf}
\end{figure}

\section{Concluding remarks}

We have developed novel mathematical theory for characterising super band gaps in periodic structures generated by generalised Fibonacci tilings. This demonstrates the mechanism through which structural self similarity creates identifiable features in the otherwise complex spectra of quasiperiodic systems. Our results also justify the use of periodic approximants for generalised Fibonacci tilings, as we have proved that the properties of a given element in this sequence of tilings can be used to predict spectral characteristics (band gaps) of all subsequent elements in the sequence. We have demonstrated this by considering a large quasicrystalline material, which is made from several different Fibonacci tiles, and shown that the frequency ranges where its transmission coefficient drops are in close correspondence with the super band gaps predicted by periodic approximants (even with relatively small unit cells).

This work is significant since it provides a concise and computationally efficient way to predict the main spectral features of quasicrystalline materials. This is crucial if such materials are to be used in wave control applications, which has been the subject of several recent studies. For example, Fibonacci tilings have been used as the basis for designing symmetry-induced waveguides \cite{davies2022symmetry} and laminar materials which exhibit negative refraction \cite{morini2019negative}. Similar studies have also been conducted for other quasicrystals, such as variants of the Harper model \cite{apigo2018topological, marti2021edge, pal2019topological}. Understanding a material's spectral gaps is essential to be able to design any such device, and the results in this work (which could be generalised to other quasicrystalline materials generated by tiling rules \cite{grunbaum1987tilings, macia2014nature}) provide a first step for doing so.

\section*{Acknowledgements}

The work of BD was supported by a fellowship from the Engineering and Physical Sciences Research Council with grant number EP/X027422/1. LM thanks the support of Cardiff University.

\section*{Competing interests}

The authors have no competing interests to declare.

\section*{Data availability}

The software developed for this study is available at \url{https://doi.org/10.5281/zenodo.7602934}. No specific datasets were generated or analysed during the current study.

\bibliographystyle{abbrv}
\bibliography{references}{}
\end{document}